\documentclass[a4paper, 10pt]{amsart}
\usepackage[top=80pt,bottom=65pt,left=70pt,right=70pt]{geometry}
\usepackage{graphicx, eepic}
\usepackage{times}

\normalfont
\usepackage{bbm}
\usepackage{xcolor}
\usepackage{verbatim}
\usepackage{kotex}
\usepackage{mathrsfs}
\usepackage{amscd}
\usepackage{hhline}
\usepackage{float}
\usepackage[all]{xy}
\usepackage[subnum]{cases}
\ifpdf
\usepackage{tikz, tikz-cd}
\usepackage{tikz-3dplot}

\usepackage{mathdots}
\fi
\usetikzlibrary{arrows,matrix,positioning}

\usepackage{caption}
\usepackage[labelformat=simple]{subcaption}

\captionsetup[subfigure]{margin=0pt, parskip=0pt, hangindent=0pt, indention=0pt, labelfont=rm}

\usepackage{makecell}

\usepackage{amsthm,amsmath,amsfonts,amssymb}
\usepackage{hyperref}
\hypersetup{colorlinks}
\hypersetup{
    bookmarks=true,         
    unicode=false,          
    pdftoolbar=true,        
    pdfmenubar=true,        
    pdffitwindow=false,     
    pdfstartview={FitH},    
    pdftitle={My title},    
    pdfauthor={Author},     
    pdfsubject={Subject},   
    pdfcreator={Creator},   
    pdfproducer={Producer}, 
    pdfkeywords={keyword1} {key2} {key3}, 
    pdfnewwindow=true,      
    colorlinks=true,       
    linkcolor=blue,          
    citecolor=red,        
    filecolor=violet,      
    urlcolor=violet       
}
\usepackage{multirow, url}
\usepackage{color}
\usepackage[all]{xy}
\linespread{1.2}

\theoremstyle{plain}
\newtheorem{theorem}{Theorem}[section]

\newtheorem{thmx}{Theorem}

\newtheorem{proposition}[theorem]{Proposition}
\newtheorem{lemma}[theorem]{Lemma}

\newtheorem{corollary}[theorem]{Corollary}

\theoremstyle{definition}
\newtheorem*{question}{Question}

\newtheorem{definition}[theorem]{Definition}

\newtheorem{example}[theorem]{Example}

\newtheorem{question_num}[theorem]{Question}

\theoremstyle{remark}
\newtheorem{remark}[theorem]{Remark}

\renewcommand{\hat}{\widehat}

\newcommand{\C}{\mathbb{C}}

\newcommand{\R}{\mathbb{R}}

\newcommand{\Z}{\mathbb{Z}}

\newcommand{\mcal}{\mathcal}

\newcommand{\node}{\mathrm{node}}
\newcommand{\SL}{\textup{SL}}
\newcommand{\Gr}{\textup{Gr}}

\def\mcal{\mathcal}
\def\frak{\mathfrak}
\def\scr{\mathscr}

\newcommand{\vs}{\vspace}
\newcommand{\hs}{\hspace}

\numberwithin{equation}{section} \numberwithin{table}{section}

%
%


\setlength{\marginparwidth}{0.7in}



%
%

\newcommand{\defi}[1]{{\textit{#1}}}
\newcommand{\xkj}[2]{{x_{#1,#2}}}
\newcommand{\tkj}[2]{{t_{#1,#2}}}
\newcommand{\skj}[2]{{s_{#1,#2}}}

\begin{document}                                                                          

\title{On the combinatorics of string polytopes}
\author{Yunhyung Cho}
\address{Department of Mathematics Education, Sungkyunkwan University, Seoul, Republic of Korea}
\email{yunhyung@skku.edu}
\author{Yoosik Kim}
\address{Department of Mathematics and Statistics, Boston University, Boston, MA, USA}
\email{kimyoosik27@gmail.com, yoosik@bu.edu}
\author{Eunjeong Lee}
\address{Center for Geometry and Physics, Institute for Basic Science (IBS), Pohang 37673, Korea}
\email{eunjeong.lee@ibs.re.kr}
\author{Kyeong-Dong Park}
\address{Center for Geometry and Physics, Institute for Basic Science (IBS), Pohang 37673, Korea}
\email{kdpark@ibs.re.kr}

\begin{abstract}
	For a reduced word ${\bf i}$ of the longest element in the Weyl group of $\SL_{n+1}(\C)$, one can associate the {\em string cone} $C_{\bf i}$
	which parametrizes the dual canonical bases.
	In this paper, we classify all ${\bf i}$'s such that $C_{\bf i}$ is simplicial. We also prove that for any 
	regular dominant weight $\lambda$ of $\frak{sl}_{n+1}(\C)$, the corresponding string polytope $\Delta_{\bf i}(\lambda)$ is unimodularly equivalent to the 
	Gelfand--Cetlin polytope associated to $\lambda$ if and only if $C_{\bf i}$ is simplicial. Thus we completely characterize Gelfand--Cetlin type string polytopes 
	in terms of ${\bf i}$.
\end{abstract}
\maketitle
\setcounter{tocdepth}{1} 
\tableofcontents

\section{Introduction}
\label{secIntroduction}

	The \emph{string polytope} was introduced by Littelmann \cite{Li} as a generalization of the Gelfand-Cetlin polytope (see~\cite{GC1950}).	
	For a connected reductive algebraic group $G$ over $\mathbb C$ and a dominant integral weight $\lambda$, a choice of a reduced word ${\bf i}$ of the longest element in the Weyl group of $G$ determines the string polytope $\Delta_{\bf i}(\lambda)$.
	Depending on a choice of string parameterizations, that is a choice ${\bf i}$ of words, combinatorially distinct string polytopes arise in general.	
	Littelmann \cite{Li} and Berenstein--Zelevinsky \cite{BZ} found a remarkably explicit description for the string polytopes reflecting a choice of words.
	
	In representation theory, the polytope has been provided a combinatorial understanding of crystal bases in finite-dimensional representations.
	In particular, the string polytope $\Delta_{\bf i}(\lambda)$ records parameterizations with respect to ${\bf i}$ of Lusztig's canonical bases \cite{Lu90} or equivalently of Kashiwara's global bases \cite{Kas90} for the irreducible representation of $G$ with highest weight $\lambda$ as its lattice points.
		
	In algebraic geometry, the string polytope $\Delta_{\bf i}(\lambda)$ has been exploited to study (partial) flag varieties and more generally spherical varieties.
	The dominant weight $\lambda$ assigns the line bundle over the flag variety associated to $\lambda$. 
	A choice of a non-zero global section and the reduced word $\bf i$ determines the valuation on the field of rational functions, which defines the Newton--Okounkov body. 
	The Newton--Okounkov body for the highest term valuation associated to the reduced word ${\bf i}$ indeed coincides with the string polytope $\Delta_{\bf i}(\lambda)$ (see \cite{FN17,Kav15}). 
	By works of \cite{An,  Cal,GL, KM}, the flag variety degenerates into the toric variety corresponding to the string polytope~$\Delta_{\bf i}(\lambda)$. It enables us to study flag and spherical varieties using the combinatorics of the string polytope $\Delta_{\bf i}(\lambda)$.
	
	Besides their importance in representation theory and algebraic geometry, understanding combinatorics of string polytopes is also important in various versions of mirror symmetry.
	In order to run Batyrev's construction for a mirror family of Calabi--Yau complete intersections in flag varieties, the reflexivity of a string polytope is crucial in \cite{AB, BCKV}.  
	A Minkowski decomposition of a string polytope is related to a construction of a Lagrangian torus fibration for Strominger--Yau--Zaslow mirror symmetry in \cite{Gross}.
	Moreover, the string polytopes provide an important class of examples of the foundational work of Gross--Hacking--Keel--Kontsevich \cite{GHKK}. 
		
	This paper is concerned with the string polytopes of Lie type $A$ with a dominant weight $\lambda$. 
	The string polytope $\Delta_{\bf i}(\lambda)$ is the intersection of two cones: the string cone $C_{\bf i}$ and the $\lambda$-cone $C_{\bf i}^\lambda$. 
	In this case where $G = \SL_{n+1}(\C)$, the inequalities defining the string cone $C_{\bf i}$ can be written from the Gleizer--Postnikov's paths in the wiring diagram (see \cite{GP}). 
	Also, Littelmann's description for the $\lambda$-cone can be read off from the wiring diagram in \cite{Ru}.
	The string polytopes are related by piecewise-linear transformations under the changes of reduced words as a geometric lifting (see \cite{BZ}). 
	In terms of cluster algebras invented by Fomin--Zelevinsky \cite{FZ}, the process has been understood as a (quiver) mutation in \cite{BF}.
	
	Despite the explicit expression and the rich combinatorial realization of the string polytopes, finding combinatorial properties of arbitrary dimensional string polytopes is quite challenging. 
	It is because observations on low dimensional examples are \emph{not} usually valid in the high dimensional case.
	The integrality of all string polytopes for $\SL_n(\mathbb C)$, $n \leq 5$, had been observed by Alexeev and Brion in \cite{AB}. They conjectured that the string polytope~$\Delta_{\bf i}(\lambda)$ of Lie type~$A$ is integral for any reduced word $\bf i$ of the longest element in the Weyl group and for any dominant integral weight $\lambda$. 
	However, it recently turns out that this conjecture does not hold because there is a reduced word $\bf i$ such that the string polytope $\Delta_{\bf i}(\varpi_3)$ for $\SL_6(\mathbb C)$ is not an integral polytope (see \cite{Ste19} for details).  
	
	As far as the authors know, the face structure, the integrality, the unimodular equivalences of string polytopes are not yet understood in general. 
	It contrasts with the Gelfand--Cetlin polytope, which is (unimodularly equivalent to) the string polytope with the standard word ${\bf i}_0$.
	The combinatorics of Gelfand--Cetlin polytopes are generalized and well-understood (see \cite{DLT, ABS, FFL, GKT, ACK} for instance).

	This paper aims to expand the understanding of string polytopes. 
	Specifically, we completely classify the unimodular equivalence class of the Gelfand--Cetlin polytope $\Delta_{{\bf i}_0}(\lambda)$ in the string polytopes as an attempt to answer the following question.

\begin{question}
Classify the unimodular equivalence classes of string polytopes. 
\end{question}

	In order to describe the criterion in terms of words, we introduce a function  and an operator on the commutation classes of reduced words: an {\em $\bullet$-(co)index} and a {\em $\bullet$-contraction} of ${\bf i}$ in Sections~\ref{ssecDAndAIndices} and~\ref{ssecGeneratingReducedWords}.
	The {\em index} measures
	how ${\bf i}$ is ``{\em far}'' from the canonical word and its involution
	\[
		{\bf i}_D := (1,2,1,\dots, n,n-1,\dots,1)\quad \text{or} \quad {\bf i}_A := (n,n-1,n,\dots, 1,2,\dots,n).   
	\]
	On the other hand, a {\em contraction} is an operator acting on each ${\bf i}$ so as to produce a new reduced word of the longest element in the Weyl group 
	of $\SL_{n}(\C)$ by ``{\em contracting}" the substring ${\bf i}_D$ or ${\bf i}_A$ in ${\bf i}$. 
	The contraction map is algorithmic. It produces a simple test on the reduced word for the unimodular equivalence. 
	
	Also, we verify that every string inequality and $\lambda$-inequality is non-redundant as long as $G$ is of Lie type $A$ and $\lambda$ is regular. 
	Namely, each inequality supports different facets, i.e. codimension-one faces.
	The number of facets of string polytopes can tell whether the string polytope is unimodularly equivalent to the Gelfand--Cetlin polytope or not. 
		
	Now we state our main theorem as follows.
			
	\begin{thmx}[Theorem~\ref{thm_GC_type_string_polytope}]\label{theorem_A}
		Let ${\bf i}$ be a reduced word of the longest element in the Weyl group of $\SL_{n+1}(\C)$.
		Let $\lambda$ be a regular dominant integral weight. 
		Then the following are equivalent. 
		\begin{enumerate}
			\item The string polytope $\Delta_{\bf i}(\lambda)$ is unimodularly equivalent to the Gelfand--Cetlin polytope $\Delta_{{\bf i}_0}(\lambda)$. 
			\item The string polytope $\Delta_{\bf i}(\lambda)$ has exactly $n(n+1)$ many facets.
			\item The associated string cone $C_{\bf i}$ is simplicial.
			\item There exists a sequence $(\sigma_1,\dots,\sigma_n) \in \{A, D\}^n$ such that 
\[
\mathrm{ind}_{\sigma_k}\left( C_{\sigma_{k+1}} \circ \cdots \circ C_{\sigma_n} ({\bf i}) \right) = 0  \quad \text{ for all }k = n,\dots,1.
\]
			Here $\mathrm{ind}_\bullet$ 
			denotes the $\bullet$-index of ${\bf i}$ and $C_\bullet$ denotes a $\bullet$-contraction where $\bullet = D$ or $A$ 
			\textup{(}see Section \ref{secBraidMovesAndIndices}\textup{)}.
		\end{enumerate}
	\end{thmx}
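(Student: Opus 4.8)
The plan is to prove the four statements equivalent by going around the cycle $(1)\Rightarrow(2)\Rightarrow(3)\Rightarrow(4)\Rightarrow(1)$, with the chain $(2)\Leftrightarrow(3)\Leftrightarrow(4)$ carrying most of the combinatorial weight and $(4)\Rightarrow(1)$ supplying the geometric payoff. The implication $(1)\Rightarrow(2)$ is immediate: a unimodular equivalence is in particular a combinatorial isomorphism of polytopes, hence preserves the number of facets, and the Gelfand--Cetlin polytope $\Delta_{{\bf i}_0}(\lambda)$ has exactly $n(n+1)$ facets when $\lambda$ is regular, one for each interlacing inequality. (Alternatively, this count is the special case ${\bf i}={\bf i}_0$ of the facet formula established in $(2)\Leftrightarrow(3)$ below, together with the fact that $C_{{\bf i}_0}$ is simplicial.)

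For $(2)\Leftrightarrow(3)$ I would first establish the non-redundancy statement announced in the introduction: when $\lambda$ is regular, every string inequality (one per Gleizer--Postnikov rigorous path in the wiring diagram of ${\bf i}$) and every Littelmann $\lambda$-inequality cuts out a genuine facet of $\Delta_{\bf i}(\lambda)$, and no string facet coincides with a $\lambda$-facet. The argument is constructive: for a fixed inequality one produces a point of $\Delta_{\bf i}(\lambda)$ at which exactly that inequality is tight, using the regularity of $\lambda$ to create slack in the remaining $\lambda$-inequalities and a direct analysis of rigorous paths to create slack in the remaining string inequalities. Granting this, the number of facets of $\Delta_{\bf i}(\lambda)$ equals the number of facets of the string cone $C_{\bf i}$ plus the number of Littelmann $\lambda$-inequalities, and the latter is the constant $\binom{n+1}{2}$, the same for every ${\bf i}$. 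Since $C_{\bf i}$ is a pointed, full-dimensional cone in $\R^{\binom{n+1}{2}}$, it has at least $\binom{n+1}{2}$ facets, with equality exactly when it is simplicial; hence $\Delta_{\bf i}(\lambda)$ has at least $2\binom{n+1}{2}=n(n+1)$ facets, with equality exactly when $C_{\bf i}$ is simplicial. This proves $(2)\Rightarrow(3)$ and $(3)\Rightarrow(2)$ simultaneously.

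The core is $(3)\Leftrightarrow(4)$, which I would prove by induction on $n$ (the base case $n=1$ being trivial) using the single reduction lemma: $C_{\bf i}$ is simplicial if and only if $\mathrm{ind}_\bullet({\bf i})=0$ for some $\bullet\in\{A,D\}$ and the string cone $C_{C_\bullet({\bf i})}$ of the contracted reduced word is simplicial. This rests on two facet-counting facts: (a) if $\mathrm{ind}_A({\bf i})>0$ and $\mathrm{ind}_D({\bf i})>0$ both hold, then the number of rigorous paths, which is the number of facets of $C_{\bf i}$, strictly exceeds $\binom{n+1}{2}$, the $\bullet$-index being precisely the surplus relative to the $\bullet$-canonical shape; and (b) if $\mathrm{ind}_\bullet({\bf i})=0$, then the $\bullet$-contraction, which peels off the ${\bf i}_\bullet$-canonical block of $n$ letters, removes exactly $\binom{n+1}{2}-\binom{n}{2}=n$ of the facets, so that $C_{\bf i}$ and $C_{C_\bullet({\bf i})}$ are simplicial or not together. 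Unwinding these: a simplicial $C_{\bf i}$ forces some index to vanish by (a), and then by (b) together with the inductive hypothesis simpliciality propagates down and back up the chain of contractions, which is exactly the assertion of $(4)$. This reduction lemma is the main obstacle: rigorous paths do not localize well in the wiring diagram, so the surplus count in (a) and the ``exactly $n$'' count in (b) must be argued globally, over families of paths, and one must rule out accidental coincidences among the surplus facets, which is where the non-redundancy from $(2)\Leftrightarrow(3)$ and the regularity of $\lambda$ re-enter.

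Finally, for $(4)\Rightarrow(1)$, given a witnessing sequence $(\sigma_1,\dots,\sigma_n)$ I would build the unimodular equivalence by induction, peeling off one contraction at a time. The geometric input is that when $\mathrm{ind}_{\sigma_n}({\bf i})=0$ the wiring diagram of ${\bf i}$ degenerates, up to commutation moves, into the ${\bf i}_{\sigma_n}$-canonical block on top of a wiring diagram of ${\bf i}'=C_{\sigma_n}({\bf i})$; tracing this through the string and Littelmann inequalities shows that, after a unimodular change of coordinates, $\Delta_{\bf i}(\lambda)$ is assembled from the string polytopes of ${\bf i}'$ by exactly the recursive interlacing construction that builds the Gelfand--Cetlin polytope of $\SL_{n+1}(\C)$ out of that of $\SL_n(\C)$. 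The tail $(\sigma_1,\dots,\sigma_{n-1})$ witnesses $(4)$ for ${\bf i}'$, so by the inductive hypothesis the string polytopes of ${\bf i}'$ are unimodularly equivalent to the corresponding Gelfand--Cetlin polytopes of $\SL_n(\C)$, uniformly enough that these equivalences glue along the recursion into a unimodular equivalence $\Delta_{\bf i}(\lambda)\cong\Delta_{{\bf i}_0}(\lambda)$. This last coordinate computation is long but routine once the reduction lemma is available.
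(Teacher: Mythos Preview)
Your cycle $(1)\Rightarrow(2)\Rightarrow(3)\Leftrightarrow(4)\Rightarrow(1)$ and the inductive reduction lemma behind $(3)\Leftrightarrow(4)$ match the paper's architecture exactly; your facts (a) and (b) are precisely the paper's Proposition~\ref{proposition_strictly_increasing} together with the observation (made explicit in the proof of Theorem~\ref{theorem_simplicial}) that when $\mathrm{ind}_\bullet({\bf i})=0$ the $\bullet$-new paths are exactly the $n$ canonical ones. Two methodological points are worth noting.

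For non-redundancy you propose producing, for each inequality, an interior witness point where exactly that inequality is tight. The paper instead introduces \emph{chamber variables} (Definition~\ref{definition_chamber_variable}): a unimodular change of coordinates in which every string and $\lambda$-inequality has only $0/1$ coefficients. In these coordinates each string inequality reads $\sum_{j\in S}u_j\ge 0$ for some subset $S$ of chambers (those enclosed by the rigorous path), and redundancy is ruled out by a short dual-cone argument comparing these subsets (Lemma~\ref{lemma_fnncc}); the $\lambda$-cone is visibly simplicial and one checks each apex lies in the interior of the other cone. This is cleaner than hunting for witness points and is a tool worth knowing.

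For $(4)\Rightarrow(1)$ your inductive gluing sketch can be made to work, but it is more delicate than you suggest: fixing the variables of the peeled $D_n$ or $A_n$ block, the remaining slice is indeed a string polytope $\Delta_{{\bf i}'}(\lambda')$, but $\lambda'$ varies (and need not stay regular), so you must carry through the induction the stronger statement that the equivalence is affine-linear in $\lambda$ and valid for all dominant weights. The paper sidesteps this entirely. It writes down a single explicit unimodular affine map $T\colon\mathbf t\mapsto M\mathbf t+\mathbf v$ (built block-by-block from the sequence $\sigma$), verifies only the one-sided inclusion $T(\Delta_{\bf i}(\lambda))\subseteq GC(\lambda)$ by a direct case analysis (Lemmas~\ref{lemma_proof_Proposition_GC_3}--\ref{lemma_proof_Proposition_GC_2}), and then concludes equality from the volume identity $\mathrm{vol}\,\Delta_{\bf i}(\lambda)=\mathrm{vol}\,GC(\lambda)$, which holds because both are Newton--Okounkov bodies of the same polarized flag variety (Lemma~\ref{lemma_volumns_are_same}, via Kaveh). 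This volume trick is the simplification you are missing: it halves the work, since ``into'' is much easier to check than ``onto''.
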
	
	
	This paper is organized as follows. In Section \ref{secGleizerPostnikovDescription}, 
	we review Gleizer--Postnikov's description of string cones which fits into our purpose. In Section \ref{secBraidMovesAndIndices}, we define the notion of 
	an index (of various type) and also define contraction and extension operators acting on each reduced words. 
	The non-redundancy of string inequalities and $\lambda$-inequalities is discussed in Section \ref{secNonRedundancyOfStringInequalities}.  
	The proof of Theorem \ref{theorem_A} will be given in the rest of the sections.

\subsection*{Acknowledgements} 
The first author was supported by the National Research Foundation of Korea(NRF) grant funded by the Korea government(MSIP; Ministry of Science, ICT \& Future Planning) (NRF-2017R1C1B5018168). 
The third author was supported by IBS-R003-D1 and Basic Science Research Program through the National Research
Foundation of Korea (NRF) funded by the Ministry of Science and ICT (No. 2016R1A2B4010823).   
The fourth author was supported by IBS-R003-Y1.

\section{Gleizer--Postnikov description}
\label{secGleizerPostnikovDescription}
In this section, we recall the description of the string cone using wiring diagrams introduced by Gleizer--Postnikov~\cite{GP}.
Before doing that, we first introduce the notations.
	Consider the Lie algebra $\frak{sl}_{n+1}(\C)$ of $\textup{SL}_{n+1}(\C)$ with a fixed positive Weyl chamber ${\bf t}^*_+$. 
We fix an ordering on the simple roots $\alpha_1, \dots, \alpha_n$ as in Figure~\ref{figure_Dynkin}.
\begin{center}
	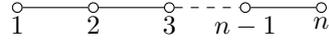
\begin{figure}[H]
		\begin{tikzpicture}[inner sep = 0.45mm]
		\node (1) at (0,0) [circle, draw] {};
		\node (2) at (1,0) [circle, draw] {};
		\node (22) at (2,0) [circle, draw] {};
		\node (3) at (3,0) [circle, draw] {};
		\node (4) at (4,0) [circle, draw] {};
		
		\node [below] at (1.south) {$1$};
		\node [below] at (2.south) {$2$};
		\node [below] at (22.south) {$3$};
		\node [below] at (3.south) {$n-1$};
		\node [below] at (4.south) {$n$};
		
		\path (1) edge (2) 
		(2) edge (22)
		(3) edge (4);
		\path[dashed] (22) edge (3);
		\end{tikzpicture}
		\caption{Dynkin diagram of Lie type $A_n$.}
		\label{figure_Dynkin}
	\end{figure}
\end{center}
\vs{-1.0cm}
Hence we have the enumeration on Cartan integers $(c_{ij})_{1 \leq i,j \leq n}$:
\begin{equation}\label{eq_Cartan_integers}
c_{ij} := \langle \alpha_i, \alpha_j^{\vee} \rangle = \begin{cases}
-1 & \text{ if } |i-j| = 1, \\
2 & \text{ if } i = j, \\
0 & \text{ otherwise}
\end{cases}
\end{equation}
where $\alpha_{i}^{\vee}$ is the coroot of $\alpha_i$. Also we let $\{\varpi_1,\dots,\varpi_n\}$ be the fundamental weights with respect to ${\bf t}^*_+$, which are characterized by the relation $\langle \varpi_i, \alpha_j^{\vee} \rangle = \delta_{ij}$. 
We call a weight $\lambda = \lambda_1 \varpi_1 + \cdots + \lambda_n \varpi_n$ 
 {\em regular} if $\lambda_i \neq 0$ for all~$i = 1,\dots,n$, and 
 {\em dominant} if $\lambda_i \geq 0$ for all $i = 1,\dots,n$. 

	Let $\mathfrak{S}_{n+1}$ be the symmetric group on $[n+1] := \{1,2,\dots,n+1\}$, which can be identified with the Weyl group of $\SL_{n+1}(\C)$. Set 
	$$N := \frac{n(n+1)}{2}.$$
	We note the dimension of the full flag variety $\textup{SL}_{n+1}(\C)/B$ of Lie type $A$ is $N$. Moreover, the length of the longest element in the symmetric group $\mathfrak{S}_{n+1}$ is also $N$.
	For the longest element $w_0$ in $\mathfrak{S}_{n+1}$, we denote the set of reduced words of the longest element $w_0$ in $\mathfrak{S}_{n+1}$ by 
	\[
		\Sigma_{n+1} := \{ {\bf i} = (i_1, \dots, i_N) \in [n]^N \mid w_0 = s_{i_1}s_{i_2} \cdots s_{i_N} \}, \quad \quad 	\]
	where $s_i = (i, i+1)$ is the simple transposition, which corresponds to the simple root $\alpha_i$ for $i=1,\dots,n$. 	
	For each reduced word ${\bf i} \in \Sigma_{n+1}$, 
	one can associate a convex rational polyhedral cone, called the {\em string cone} and denoted by $C_{\bf i}$, in $\R^N$ as in Littelmann \cite{Li} and Berenstein--Zelevinsky \cite{BZ} {\footnote{Their descriptions work for other Lie types as well. In this article, we will focus on string cones and polytopes of Lie type $A$.}}.	 
	Each lattice point in $C_{\bf i}$ parametrizes a dual canonical basis element of the quantized universal enveloping algebra associated to $\frak{sl}_{n+1}(\C)$.

	For each dominant integral weight $\lambda = \lambda_1 \varpi_1 + \cdots + \lambda_n \varpi_n $,
	Littelmann \cite{Li} constructed a convex polytope $\Delta_{\bf i}(\lambda)$, called the {\em string polytope}, where the set of lattice points therein is in one-to-one correspondence with 
	the dual canonical basis of the highest weight module (with highest weight $\lambda$) for $\SL_{n+1}(\C)$. 
	The string polytope $\Delta_{\bf i}(\lambda)$ is obtained as the intersection of the string cone $C_{\bf i}$ and so-called the {\em $\lambda$-cone} $C_{\bf i}^\lambda$. 
	We shall review Gleizer--Postnikov's (respectively, Rusinko's) description of the string cone (respectively, the $\lambda$-cone) of Lie type $A$ in the wiring diagram 
	(see \cite{GP, Ru}).

\subsection{Wiring diagrams and rigorous paths }
\label{ssecWiringDiagrams}\label{ssecRigorousPaths}

	Any reduced word ${\bf i} = (i_1, \dots, i_N) \in \Sigma_{n+1}$ can be represented by a {\em wiring diagram} (also called a {\em pseudoline arrangement}, or a {\em string diagram}). 
	The diagram corresponding to the reduced word ${\bf i}$ will be denoted by $G({\bf i})$.
	As depicted in Figure \ref{figure_wiring_diagram_GC}, 
	the wiring diagram $G({\bf i})$ consists of a family of~$(n+1)$ vertical piecewise straight lines such that 
	each pair of wires must intersect exactly once.  
	The lines are labeled by $\ell_1, \ell_2, \ldots , \ell_{n+1}$ and the upper end and lower end of each line $\ell_k$ are labeled by $U_k$ and $L_k$, respectively. 
	We call $\ell_k$ the $k$th \emph{wire}.

	The crossing patterns of pairs of wires are determined by ${\bf i}$.
	Specifically, the position of the $j$th crossing (from the top) should be located on the $i_j$th column of $G({\bf i})$ (see Figure \ref{figure_wiring_diagram_GC}). 
	We call each crossings {\em nodes} and name them as $t_1, t_2, \ldots, t_N$ from the top to the bottom. 
		
	\begin{figure}[h]
		\scalebox{0.92}{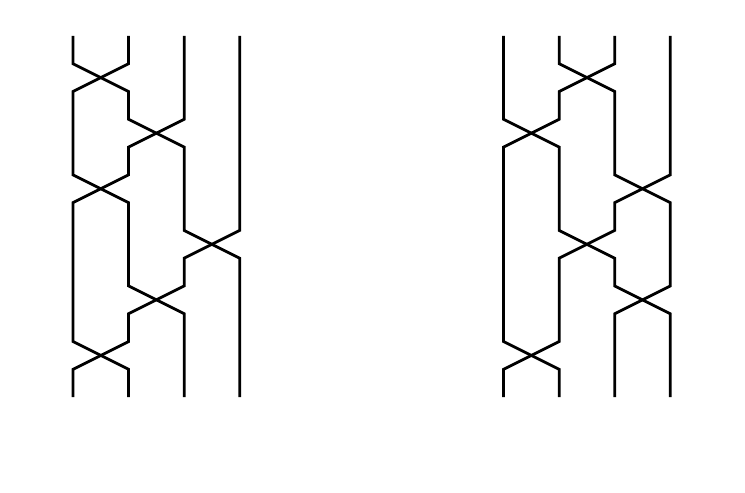} \vspace{-0.2cm}
		\caption{\label{figure_wiring_diagram_GC} Wiring diagrams for ${\bf i} = (1,2,1,3,2,1)$ and ${\bf i}' = (2,1,3,2,3,1)$.}	
	\end{figure}
	
	A {\em rigorous path} is an oriented path on $G({\bf i})$ defined as follows. 
	For each $k \in [n]$, let $G({\bf i}, k)$ be the wiring diagram $G({\bf i})$ together with the orientation on the wires where the first $k$ wires $\ell_1, \ldots, \ell_k$ are oriented 
	upward and the other wires $\ell_{k+1}, \ldots, \ell_{n+1}$ are oriented downward (see Figure~\ref{figure_wd_oriented}). 
	
	\begin{definition}[{\cite[Section 5.1]{GP}}]\label{definition_rigorous_path}
		For any $k \in [n]$, a {\em rigorous path} (or a {\em Gleizer--Postnikov path}) is an oriented path on $G({\bf i}, k)$ obeying the following properties:
		\begin{itemize}
			\item it starts at $L_k$ and ends at $L_{k+1}$, 
			\item it respects the orientation of $G({\bf i}, k)$,
			\item it passes through each node at most once, and
			\item it does \emph{not} include {\em forbidden fragments} given in Figure~\ref{figure_avoiding}.
		\end{itemize}
		We denote the set of rigorous paths by $\mathcal{GP}({\bf i})$. 

	\begin{figure}[h]
		\scalebox{0.92}{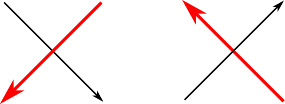}
			\vspace{-0.1cm}
		\caption{\label{figure_avoiding} Forbidden fragments.}
	\end{figure}
	\vspace{-0.2cm}
	\end{definition}
	
	\begin{remark}\label{remark_avoiding}
		A forbidden fragment in Figure~\ref{figure_avoiding} occurs when $\ell_i$ crosses over $\ell_j$ such that 
		\begin{itemize}
			\item $i > j$ where the orientation of both wires is downward, or 
			\item $i < j$ where the orientation of both wires is upward.  
		\end{itemize}	
	\end{remark}
		
	A node $t$ is called a {\em peak} of a rigorous path $P \in \mcal{GP}({\bf i})$ if $t$ is a local maximum of the path $P$ with respect to the height of the diagram $G({\bf i})$.
	Note that $P$ may have many peaks. As in Figure \ref{figure_wd_oriented}, 
	the path in the third diagram has a unique peak at $t_1$ while the path in the last diagram peaks at two nodes $t_2$ and~$t_3$. 
	
\vspace{0.1cm}	
		\begin{figure}[h]
		\scalebox{0.92}{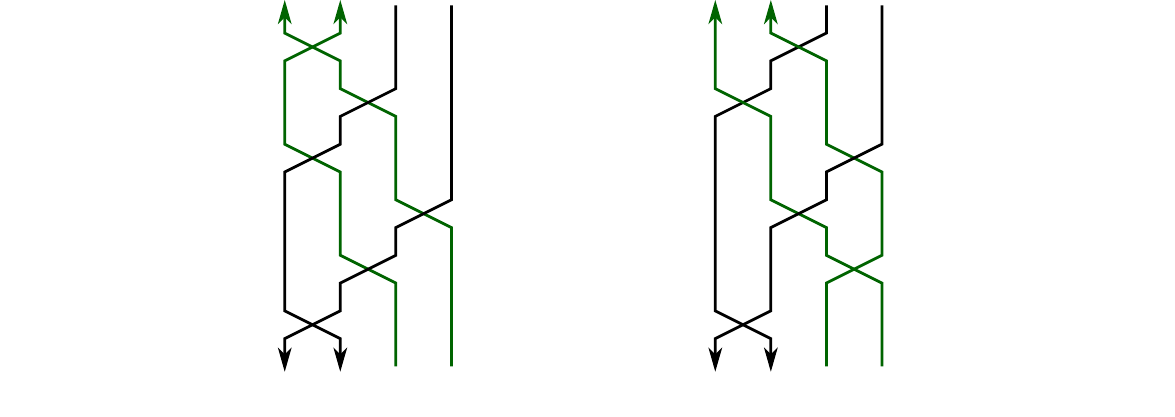}
		\vspace{-0.1cm}	
		\caption{\label{figure_wd_oriented} Oriented wiring diagrams for ${\bf i} = (1,2,1,3,2,1)$ and ${\bf i}' = (2,1,3,2,3,1)$.}
	\end{figure}	

	We will use two expressions for a rigorous path $P$: a node-expression and a wire-expression.
	Following the notation of \cite{GP}, we express a rigorous path $P$ in $G({\bf i}, k)$ by 
	\begin{equation}\label{equation_node_expression}
		P = (L_k \rightarrow t_{j_1} \rightarrow \cdots \rightarrow t_{j_s} \rightarrow L_{k+1}),
	\end{equation}
	where $t_{j_1},  \dots, t_{j_s}$ are the nodes at which the path $P$ crosses from one wire to another wire. 
	We will call \eqref{equation_node_expression} the {\em node-expression} of $P$ and  
	denote by $\node(P) := \{t_{j_1}, \dots, t_{j_s} \}$.
	For instance, the path in the first (resp. fourth) diagram in Figure~\ref{figure_wd_oriented} is expressed as $L_1 \to t_2 \to t_3 \to L_2$ (resp. $L_2 \to t_3 \to t_4 \to t_2 \to L_3$).
	
	Also, a rigorous path can be expressed by recording the wires in order traveling through. In other words, the rigorous path $P$ given in \eqref{equation_node_expression} can be written as
	\begin{equation}\label{equation_wire_expression}
		\ell_{r_1} \rightarrow \cdots \rightarrow \ell_{r_{s+1}} \quad \quad \quad r_1 = k, \quad r_{s+1} = k+1
	\end{equation}
	where the node $t_{j_t}$ is at the intersection of $\ell_{r_t}$ and $\ell_{r_{t+1}}$ for each $t=1,\dots,s$.
	The expression \eqref{equation_wire_expression} is called a {\em wire-expression}. 
	For instance, the path in the first (resp. fourth) diagram in Figure~\ref{figure_wd_oriented} is expressed as $\ell_1 \to \ell_3 \to \ell_2$ (resp. $\ell_2 \to \ell_4 \to \ell_1 \to \ell_3$).
	
 \vs{0.1cm}	 
\subsection{String inequalities}\label{ssecStringInequalities}

We now introduce defining inequalities of the string cone $C_{\bf i}$.
	
	\begin{definition}\label{definition_string_inequality}
		Let $P$ be a rigorous path in $G({\bf i}, k)$ for some $k \in [n]$. The {\em string inequality associated to $P$} is defined by 
		\begin{equation}\label{equation_stringcone}
			\sum a_i t_i \geq 0, \quad \quad \text{ where }a_i := \begin{cases}
				1 & \text{if $P$ travels from $\ell_r$ to $\ell_s$ at $t_i$ and $r < s$}, \\ 
				-1 & \text{if $P$ travels from $\ell_r$ to $\ell_s$ at $t_i$ and $r > s$}, \\ 
				 0 & \text{otherwise}.
			\end{cases}
		\end{equation}
	\end{definition}

	\begin{theorem}[{\cite[Corollary 5.8]{GP}}]\label{theorem_GP_BZ}
		The string cone $C_{\bf i}$ coincides with the set of points satisfying the string inequalities in Definition \ref{definition_string_inequality}.
	\end{theorem}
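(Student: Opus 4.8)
The plan is to derive Theorem \ref{theorem_GP_BZ} from the Berenstein--Zelevinsky description of $C_{\mathbf i}$ by $\mathbf i$-trails in the fundamental modules of $\mathfrak{sl}_{n+1}(\C)$, and then to single out the irredundant inequalities as precisely those coming from rigorous paths. Recall that Berenstein--Zelevinsky cut out $C_{\mathbf i}$ by a finite system of homogeneous linear inequalities: for each $k\in[n]$ and each $\mathbf i$-trail $\pi$ in the $k$-th fundamental module $V(\varpi_k)$ joining the two relevant extremal weights, writing $\pi$ as a chain of weights $\gamma_0,\dots,\gamma_N$ with $\gamma_{l-1}-\gamma_l=c_l(\pi)\,\alpha_{i_l}$, $c_l(\pi)\in\Z_{\ge 0}$, and subject to the non-vanishing of the associated product of divided powers of Chevalley generators, one obtains
\[
  \sum_{l=1}^N m_l(\pi)\, t_l \;\ge\; 0,
\]
with the integer vector $m(\pi)$ read off from the chain $\pi$. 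It then suffices to match this system, up to nonnegative rational combinations, with the string inequalities of Definition \ref{definition_string_inequality}.

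The next step is to set up a dictionary between $\mathbf i$-trails and oriented paths in $G(\mathbf i,k)$. Each $V(\varpi_k)\cong\Lambda^k\C^{n+1}$ is minuscule, so every $c_l(\pi)$ is $0$ or $1$, the weights of $V(\varpi_k)$ are labelled by $k$-element subsets of $[n+1]$, and subtracting $\alpha_j$ corresponds to the swap $j\leftrightarrow j+1$ inside such a subset. Scanning $G(\mathbf i,k)$ from top to bottom, the node $t_l$ lies on column $i_l$ and encodes exactly the choice of performing ($c_l=1$) or not performing ($c_l=0$) such a swap, while the orientation of $G(\mathbf i,k)$ records which strands are ``occupied''. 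Following this bookkeeping converts an $\mathbf i$-trail into an oriented path from $L_k$ to $L_{k+1}$ that at each node either runs straight or turns; minusculeness forces the non-vanishing condition to become the requirement that the path meet each node at most once; and $m_l(\pi)$ becomes precisely the coefficient $a_l\in\{1,-1,0\}$ of Definition \ref{definition_string_inequality}, the sign recording whether the turn at $t_l$ goes from a lower- to a higher-indexed wire or conversely. Thus ``admissible paths'' (orientation-respecting, meeting each node at most once) are in bijection with $\mathbf i$-trails, and a trail and its path carry the same linear functional.

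Finally, and this is the heart of the matter, I would prove that the forbidden fragments of Figure \ref{figure_avoiding} exactly isolate the redundant inequalities, so that one may pass from all admissible paths to rigorous paths. One inclusion is free: a rigorous path is admissible, hence its inequality is a Berenstein--Zelevinsky inequality, so $C_{\mathbf i}$ is contained in the polyhedron they define. For the reverse inclusion I would run a straightening argument: if an admissible path $P$ contains a forbidden fragment, i.e.\ two equally oriented wires crossing ``the wrong way'' as in Remark \ref{remark_avoiding}, then uncrossing $P$ locally at that crossing produces admissible paths $P'$ (and possibly $P''$) together with an identity expressing $\sum_l a_l(P)\,t_l$ as a nonnegative integer combination of $\sum_l a_l(P')\,t_l$, $\sum_l a_l(P'')\,t_l$, and functionals attached to strictly shorter paths. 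Since each such move strictly decreases a complexity measure (for instance the number of forbidden fragments, refined by total path length), iteration terminates, so every Berenstein--Zelevinsky inequality is a nonnegative combination of rigorous ones; combined with the first inclusion, both systems define the same cone $C_{\mathbf i}$.

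The main obstacle is this last straightening step: one must verify, uniformly over the two shapes of forbidden fragment, that the local uncrossing yields paths that are still orientation-respecting and still meet each node at most once, and that the resulting relation among the linear functionals genuinely has the claimed nonnegative-combination form; the bookkeeping of the $\pm1$ coefficients through the move is where errors would creep in. A secondary subtlety lies in the dictionary step, where the algebraic non-vanishing of the divided-power product must be identified with the purely combinatorial ``each node at most once'' condition; this is exactly where minusculeness of the $V(\varpi_k)$ is used, and it is the reason the statement is confined to $\SL_{n+1}(\C)$ and to fundamental weights.
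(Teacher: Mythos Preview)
The paper does not prove Theorem \ref{theorem_GP_BZ}; it is quoted as \cite[Corollary 5.8]{GP} and used as a black box, with Example \ref{example_string_cone_GC} following immediately after the statement. So there is no ``paper's own proof'' to compare against.

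That said, your outline is a faithful sketch of the strategy Gleizer and Postnikov themselves use. The dictionary between $\mathbf i$-trails in $V(\varpi_k)\cong\Lambda^k\C^{n+1}$ and oriented paths in $G(\mathbf i,k)$, exploiting minusculeness to force $c_l(\pi)\in\{0,1\}$ and to identify the non-vanishing condition with the ``each node at most once'' rule, is exactly the mechanism behind \cite[Corollary 5.8]{GP}. Your self-identified obstacle is genuine: the straightening step, showing that every admissible path's functional is a nonnegative combination of rigorous ones, requires care. In \cite{GP} this is handled not by an uncrossing-and-descent argument on individual paths but by a more global analysis (they identify the rigorous paths directly with the extreme rays of the dual cone via a determinantal/minor interpretation), so if you pursue your local-move approach you would need to supply the termination and sign bookkeeping yourself rather than finding it ready-made in the literature. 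For the purposes of the present paper, however, none of this is needed: the result is simply imported.
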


	\begin{example}\label{example_string_cone_GC}
		Let ${\bf i} = (1,2,1,3,2,1)$ and ${\bf i}' = (2,1,3,2,3,1)$. Then the corresponding wiring diagrams are given in Figure \ref{figure_wiring_diagram_GC}.
	\noindent 
	Then the string cones $C_{\bf i}$ and $C_{\bf i'}$ are respectively defined as follows:
	\[
		\begin{cases}
			\text{ $G({\bf i}, 1)$} \colon t_1 \geq 0, t_2 - t_3 \geq 0, t_4 - t_5 \geq 0, \\
			\text{ $G({\bf i}, 2)$}  \colon t_3 \geq 0, t_5 - t_6 \geq 0, \\
			\text{ $G({\bf i}, 3)$}  \colon t_6 \geq 0, 
		\end{cases}
		\, \text{and} \,\,
		\begin{cases}
			G({\bf i'},1) \colon t_5 \geq 0, \\
			G({\bf i'},2) \colon t_1 \geq 0, t_2 - t_5 \geq 0, t_3 - t_6 \geq 0, \\
                      \hskip 4em t_2 + t_3 - t_4 \geq 0, t_4 - t_5 - t_6 \geq 0, \\
			G({\bf i'},3) \colon t_6 \geq 0.
		\end{cases}
	\]
	\end{example}
\begin{remark}\label{rmk_number_of_string_cone_iequalities}
Notice that in Example \ref{example_string_cone_GC}, the number of inequalities for $C_{\bf i}$ is six, while that for $C_{\bf i'}$ is seven. 
Depending on a choice~${\bf i}$ of reduced words, the number of inequalities for string cone may vary.
\end{remark}

\vs{0.1cm}	 
\subsection{$\lambda$-inequalities}
\label{ssecLambdaInequalities}
	
	As explained in the introduction, for a given reduced word ${\bf i}$ and a  dominant weight $\lambda$, 
	a string polytope $\Delta_\lambda({\bf i})$ is obtained by intersecting two convex cones, where one is a string cone $C_{\bf i}$ and the other one is so-called a $\lambda$-cone 
	denoted by $C_{\bf i}^\lambda$. We explain how the $\lambda$-cone is defined.
	
	\begin{definition}[{\cite[Lemma 1]{Ru}}]\label{definition_lambda_inequality}
		Let ${\bf i} = (i_1, \dots, i_N) \in \Sigma_{n+1}$ and $\lambda = \lambda_1 \varpi_1 + \cdots + \lambda_n \varpi_n$ be a  dominant weight.
		For each node $t_j$ in $G({\bf i})$, the {\em $\lambda$-inequality associated to $t_j$} is defined by 
		\begin{equation}\label{equation_lambdacone}
			t_j \leq \lambda_{i_j} + \sum_{k > j} a_k t_k, \quad \quad \text{ where } a_k := \begin{cases}
				1 & \text{if the node $t_k$ is one column to the right or left of $t_j$}, \\
				-2 & \text{if the node $t_k$ is in  the same column as $t_j$}, \\
				0 & \text{otherwise}.
			\end{cases}
		\end{equation}
		The {\em $\lambda$-cone}, denoted by $C_{\bf i}^\lambda$, is the collection of points in $\R^N$ satisfying all $\lambda$-inequalities. 
	\end{definition}
	
	\begin{theorem}[{\cite{Li}}]\label{theorem_Li}
		For any ${\bf i} \in \Sigma_{n+1}$ and any 
dominant 
weight $\lambda$, we have 
		\[
			\Delta_{\bf i}(\lambda) = C_{\bf i} \cap C_{\bf i}^\lambda.
		\]
	\end{theorem}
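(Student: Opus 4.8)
\emph{Proof proposal.}
Theorem~\ref{theorem_Li} is a classical result of Littelmann \cite{Li}, and the explicit shape of the $\lambda$-cone recorded in Definition~\ref{definition_lambda_inequality} is the wiring-diagram reformulation \cite{Ru} of his $\lambda$-inequalities. Accordingly, the plan is to recall the crystal-theoretic construction of the string polytope, reduce the assertion to one lattice-point identity, and indicate how the two cited ingredients combine to establish it. Write $\Phi_{\bf i}\colon B(\infty)\to\Z^N_{\geq 0}$ for the string parametrization attached to ${\bf i}$, so that $\Phi_{\bf i}(b)=(a_1,\dots,a_N)$ with $a_k=\varepsilon_{i_k}\bigl(\tilde{e}_{i_{k-1}}^{\,a_{k-1}}\cdots\tilde{e}_{i_1}^{\,a_1}\,b\bigr)$, and put $S_m:=\Phi_{\bf i}\bigl(B(m\lambda)\bigr)$ for $m\geq 1$, where $B(m\lambda)$ is the crystal of the irreducible module $V(m\lambda)$. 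By Theorem~\ref{theorem_GP_BZ} the string inequalities of Definition~\ref{definition_string_inequality} cut out precisely $C_{\bf i}$, and $C_{\bf i}\cap\Z^N=\Phi_{\bf i}(B(\infty))$. Littelmann's polytope is, by construction, $\Delta_{\bf i}(\lambda)=\overline{\bigcup_{m\geq 1}\tfrac1m S_m}\subset\R^N$, a rational polytope. Hence it suffices to prove the lattice-point identity
\[
	S_m \;=\; m\,(C_{\bf i}\cap C_{\bf i}^\lambda)\cap\Z^N \qquad\text{for every } m\geq 1.
\]

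First I would pass through Kashiwara's embedding $B(\lambda)\hookrightarrow B(\infty)$, under which the image of $B(m\lambda)$ is exactly $\{\,b\in B(\infty)\mid\varepsilon_i^{\ast}(b)\leq m\lambda_i\text{ for all }i\in[n]\,\}$, where $\varepsilon_i^{\ast}=\varepsilon_i\circ{\ast}$ for Kashiwara's involution ${\ast}$ on $B(\infty)$. The hard part is to express each $\varepsilon_i^{\ast}$ as a function of the string coordinates $(t_1,\dots,t_N)=\Phi_{\bf i}(b)$. Since $\varepsilon_i$ is the first string coordinate for any reduced word beginning with $i$, one has $\varepsilon_i^{\ast}(b)=\varepsilon_i(b^{\ast})$, which is a string coordinate of $b^{\ast}$; feeding this through the (piecewise-linear) Berenstein--Zelevinsky transition maps between string parametrizations for different reduced words, together with the known behavior of ${\ast}$ under the reversal ${\bf i}\mapsto(i_N,\dots,i_1)$, one sees that the restriction of $\varepsilon_i^{\ast}$ to $C_{\bf i}$ is the maximum of finitely many linear forms, one attached to each node of $G({\bf i})$ lying in the $i$-th column. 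The constraint $\varepsilon_i^{\ast}(b)\leq m\lambda_i$ therefore becomes the conjunction, over those nodes $t_j$, of the inequalities obtained by multiplying \eqref{equation_lambdacone} through by $m$; checking that the linear form attached to a node $t_j$ is precisely $t_j-\sum_{k>j}a_k t_k$ with the coefficients $a_k$ of \eqref{equation_lambdacone} is the computation carried out node-by-node, by following the wires of $G({\bf i})$, in \cite[Lemma~1]{Ru}. This is the only genuinely nontrivial step, and in the text we simply quote it.

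Granting the translation, for every $m\geq 1$ the lattice points of $m\,(C_{\bf i}\cap C_{\bf i}^\lambda)$ are exactly the $\Phi_{\bf i}(b)$ with $b\in B(\infty)$ and $\varepsilon_i^{\ast}(b)\leq m\lambda_i$ for all $i\in[n]$, that is, exactly $S_m$; this is the displayed identity. Finally, $C_{\bf i}$ and $C_{\bf i}^\lambda$ are rational polyhedra, and $P:=C_{\bf i}\cap C_{\bf i}^\lambda$ contains the origin (every string inequality and every $\lambda$-inequality holds at $0$ because $\lambda$ is dominant) while having only finitely many lattice points by the previous paragraph; hence $P$ is bounded, i.e.\ a rational polytope. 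The rational points of a rational polytope are dense in it, so $P=\overline{\bigcup_{m\geq 1}\tfrac1m(mP\cap\Z^N)}=\overline{\bigcup_{m\geq 1}\tfrac1m S_m}=\Delta_{\bf i}(\lambda)$, as claimed. This argument does not assert that $\Delta_{\bf i}(\lambda)$ is a lattice polytope; as the $\SL_6(\C)$ example mentioned in the introduction shows, it need not be.
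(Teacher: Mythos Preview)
The paper does not give its own proof of Theorem~\ref{theorem_Li}: it is stated with the attribution \cite{Li} and immediately followed by Example~\ref{example_lambda_inequality}. In this paper the identity $\Delta_{\bf i}(\lambda)=C_{\bf i}\cap C_{\bf i}^\lambda$ is taken as input from Littelmann, with Definition~\ref{definition_lambda_inequality} (via \cite{Ru}) recording the explicit form of the $\lambda$-cone.

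Your proposal therefore goes well beyond what the paper does: you sketch the crystal-theoretic argument behind the cited result, passing through Kashiwara's embedding $B(m\lambda)\hookrightarrow B(\infty)$, the characterization of its image by $\varepsilon_i^{\ast}(b)\leq m\lambda_i$, and Rusinko's translation of these constraints into the wiring-diagram inequalities \eqref{equation_lambdacone}. This is indeed the standard route in the literature, and your outline is coherent; the step you flag as ``the only genuinely nontrivial step''---identifying the piecewise-linear function $\varepsilon_i^{\ast}$ on $C_{\bf i}$ with the maximum of the linear forms $t_j-\sum_{k>j}a_kt_k$ over nodes in column $i$---is precisely what is quoted from \cite[Lemma~1]{Ru}, just as the paper does in Definition~\ref{definition_lambda_inequality}. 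Since the paper itself offers no argument to compare against, there is nothing further to contrast.
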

	
	\begin{example}\label{example_lambda_inequality}
		Consider ${\bf i} = (1,2,1,3,2,1)$ as in Example \ref{example_string_cone_GC} and a dominant weight $\lambda = \lambda_1 \varpi_1 + \lambda_2 \varpi_2 + \lambda_3 \varpi_3$. 
		From $G({\bf i})$ in Figure~\ref{figure_wiring_diagram_GC}, the inequalities for $C_{\bf i}^\lambda$ can be written as follows.
		\[
			\begin{array}{lllll}
				t_1 \leq \lambda_1 + t_2 - 2t_3 + t_5 - 2t_6; & & t_2 \leq \lambda_2 +t_3 +t_4 -2 t_5+t_6; & & t_3 \leq \lambda_1 + t_5 -2t_6;\\
				t_4 \leq \lambda_3 +t_5; & & t_5 \leq \lambda_2 +t_6; & & t_6 \leq \lambda_1.\\
			\end{array}
		\]
	\end{example}

\vspace{0.09cm}
\section{Braid moves and indices}
\label{secBraidMovesAndIndices}

	As we have seen in Example~\ref{example_string_cone_GC} and Remark~\ref{rmk_number_of_string_cone_iequalities}, the combinatorics of the string cone $C_{\bf i}$, especially the number of supporting hyperplanes of $C_{\bf i}$, depends on the choice of 
	a reduced word ${\bf i} \in \Sigma_{n+1}$. 
	In this section, we introduce the notion of {\em an index} of ${\bf i}$
	which will be used in Section \ref{secSimplicialStringCones} to classify all reduced words whose string cones are {\em simplicial}, that is, having the minimal number of facets.
	To begin with, we establish some notations: for any word ${\bf i} = (i_1, \dots, i_k)$
and $a \in \Z_{\geq 0}$, 
	\begin{itemize}
		\item ${\bf i} \geq a$ (respectively ${\bf i} \leq a$) if $i_j \geq a$ (respectively $i_j \leq a$) for every $j$.
		\item ${\bf i} + a := (i_1 + a, \dots, i_k + a)$. 
	\end{itemize}
	
\subsection{Braid moves}
\label{ssecBraidMoves}
	According to Tits' Theorem \cite{Ti}, every pair of reduced words in $\Sigma_{n+1}$ is connected by a sequence of the following moves (called {\em braid moves}): 
	\begin{itemize}
		\item (2-move) exchanging $(i,j)$ with $(j,i)$ for $|i-j| > 1$, i.e., $s_i s_j = s_j s_i$.
		\item (3-move) exchanging $(i,j,i)$ with $(j,i,j)$ for $|i-j| = 1$, i.e., $s_i s_{i+1} s_i  = s_{i+1} s_i s_{i+1}$.
	\end{itemize}
	The braid moves are also described using wiring diagrams as follows.
	
	\begin{figure}[h]
		\scalebox{0.95}{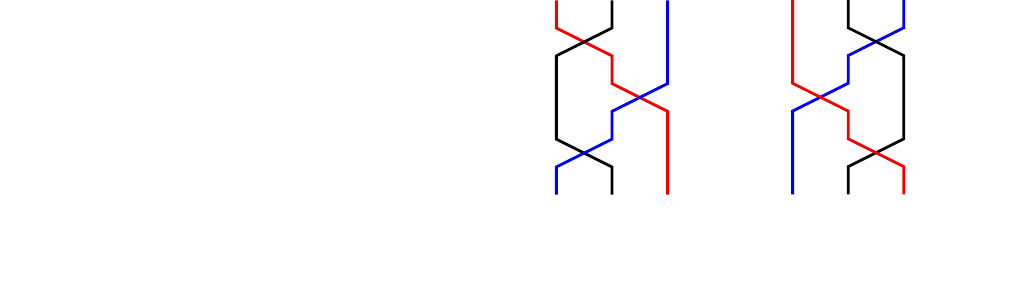}
		\vspace{-0.2cm}
		\caption{\label{figure_braid_moves} Braid moves.}
	\end{figure}
			\vspace{-0.1cm}

\subsection{$D$- and $A$-indices}
\label{ssecDAndAIndices}

	From the definition, it immediately follows that if two reduced words ${\bf i}$ and ${\bf i'}$ differ by 2-move $s_is_j \leftrightarrow s_js_i$, then the corresponding string cones
	$C_{\bf i}$ and $C_{\bf i'}$ also differ by the change of coordinates $t_i \leftrightarrow t_j$. 
	
	\begin{lemma}\label{lemma_2_move}
		If ${\bf i} \in \Sigma_{n+1}$ is obtained from ${\bf i'} \in \Sigma_{n+1}$ by a sequence of 2-moves, then $C_{\bf i}$ and $C_{\bf i'}$ are unimodularly equivalent. 
		Moreover, string polytopes $\Delta_{\mathbf i}(\lambda)$ and $\Delta_{\mathbf i'}(\lambda)$ are unimodularly equivalent for any  dominant weight $\lambda$.
	\end{lemma}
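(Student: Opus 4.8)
The plan is to reduce the statement to the case of a single $2$-move and then to exhibit an explicit coordinate permutation realizing the equivalence. Since a composite of unimodular transformations is again unimodular and unimodular equivalence of cones (respectively, of polytopes) is transitive, it suffices to prove the following: if ${\bf i'}$ is obtained from ${\bf i}$ by one $2$-move, exchanging the letters $i_p = i$ and $i_{p+1}=j$ at positions $p,p+1$ (so $|i-j|>1$, $i'_p=j$, $i'_{p+1}=i$, and $i'_m=i_m$ for $m\notin\{p,p+1\}$), then the transposition $\tau\colon \R^N\to\R^N$ of the $p$-th and $(p+1)$-st coordinates carries $C_{\bf i}$ onto $C_{\bf i'}$ and $C_{\bf i}^\lambda$ onto $C_{\bf i'}^\lambda$. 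As $\tau$ is a permutation matrix, hence lies in $GL_N(\Z)$, and as $\Delta_{\bf i}(\lambda)=C_{\bf i}\cap C_{\bf i}^\lambda$ by Theorem~\ref{theorem_Li}, both assertions of the lemma then follow at once from $\tau(\Delta_{\bf i}(\lambda))=\tau(C_{\bf i})\cap\tau(C_{\bf i}^\lambda)=C_{\bf i'}\cap C_{\bf i'}^\lambda=\Delta_{\bf i'}(\lambda)$.

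The first step is to identify the two wiring diagrams geometrically. Because $|i-j|>1$, the $p$-th crossing of $G({\bf i})$, which sits in column $i$, and the $(p+1)$-st, which sits in column $j$, involve disjoint pairs of wires, so they may be drawn at the same height; the same holds for $G({\bf i'})$. Under this normalization $G({\bf i})$ and $G({\bf i'})$ are literally the same picture (including, for every $k$, the orientation of $G({\bf i},k)$), the only difference being that the crossing in column $i$ is labelled $t_p$ in $G({\bf i})$ but $t_{p+1}$ in $G({\bf i'})$, the crossing in column $j$ is labelled $t_{p+1}$ in $G({\bf i})$ but $t_p$ in $G({\bf i'})$, and every other node keeps its label. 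Call this node identification $\varphi$; in coordinates it is exactly $\tau$.

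Next I would transport the defining inequalities across $\varphi$. For the string cone this is the remark made just before the statement: $\mcal{GP}({\bf i})=\mcal{GP}({\bf i'})$ under $\varphi$, because the notions of rigorous path and of forbidden fragment (Figure~\ref{figure_avoiding}) refer only to the geometry of $G({\bf i},k)$, and the string inequality~\eqref{equation_stringcone} attached to a path is read off from that geometry; hence $\tau(C_{\bf i})=C_{\bf i'}$ by Theorem~\ref{theorem_GP_BZ}. For the $\lambda$-cone one inspects~\eqref{equation_lambdacone}: the inequality attached to a node $t$ is determined by $\lambda$, by the column of $t$ (through the term $\lambda_{i_{(\cdot)}}$), and, for the coefficients $a_k$, only by whether each later node lies in the same column as $t$ or in a column adjacent to it. Swapping the order of the crossings in columns $i$ and $j$, which are non-adjacent, alters none of these data; a short case analysis according as $t$ is $t_p^{\bf i}$, $t_{p+1}^{\bf i}$, or some $t_m$ with $m<p$ (the case $m>p+1$ being immediate, since $\tau$ fixes those coordinates and the column of $t_m$ is unchanged) shows that the $\lambda$-inequality for $t$ in $G({\bf i})$ is carried by $\tau$ to the $\lambda$-inequality for $\varphi(t)$ in $G({\bf i'})$. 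This yields $\tau(C_{\bf i}^\lambda)=C_{\bf i'}^\lambda$, and the proof is finished by the reduction above.

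I do not foresee a serious obstacle. The only place requiring genuine checking is the last case analysis for the $\lambda$-inequalities, and there the decisive point is simply that the hypothesis $|i-j|>1$ forces the coefficient of $t_p$ (respectively $t_{p+1}$) to vanish in the $\lambda$-inequality of $t_{p+1}^{\bf i}$ (respectively $t_p^{\bf i}$), so that the two inequalities indeed match after transposition; everything else is a direct translation of the preceding definitions.
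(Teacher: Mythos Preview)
Your proof is correct and follows precisely the approach the paper indicates: the paper states the lemma without a separate proof, noting just before it that the string cones ``differ by the change of coordinates $t_i \leftrightarrow t_j$'' under a $2$-move, which is exactly your transposition $\tau$. You have simply supplied the (straightforward) details the paper omits, including the verification for the $\lambda$-cone.
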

	
	Now, define an equivalence relation on $\Sigma_{n+1}$ such that 
	\begin{equation}\label{eq_def_of_2_move}
		{\bf i} \sim {\bf i'} \quad \Leftrightarrow \quad \text{${\bf i}$ and ${\bf i'}$ are related by a sequence of 2-moves}
	\end{equation}
	and denote the set of equivalence classes\footnote{Those equivalence classes are called the {\em commutation classes}. See \cite{B} for example.}
	by $\widetilde{\Sigma}_{n+1} := \Sigma_{n+1} / \sim$.  We first observe the following:
	
	\begin{proposition}\label{proposition_D}
		For any ${\bf i} = (i_1, \dots, i_N) \in \Sigma_{n+1}$, we can find a sequence of 2-moves such that 
		\[
			{\bf i} \sim (\underbrace{i_1', \dots, i_u'}_{=: ~{\bf i}_D^-}, \underbrace{n, n-1, \dots, 2, 1}_{=: ~D_n}, \underbrace{i_{u+n+1}', \dots, i_N'}_{=: ~{\bf i}^+_D}) 		
		\]
		for some integer $u\geq 0$ and $i_j' \in [n]$. Similarly, there exist an integer $v \geq 0$ and $i_j''$'s in $[n]$ such that 
		\[
			{\bf i} \sim (\underbrace{i_1'', \dots, i_v''}_{=: ~{\bf i}_A^-}, \underbrace{1, 2, \dots, n-1, n}_{=: ~A_n}, \underbrace{i_{v+n+1}'', \dots, i_N''}_{=: ~{\bf i}^+_A})
		\]
		where `$D$' and `$A$' stand for `descending' and 'ascending', respectively.
	\end{proposition}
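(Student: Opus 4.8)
The plan is to prove the assertion about $D_n$ directly in the wiring diagram, and to deduce the assertion about $A_n$ from it by the Dynkin-diagram symmetry. For the latter, note that $\tau\colon i\mapsto n+1-i$ induces the automorphism $s_i\mapsto s_{n+1-i}$ of $\mathfrak S_{n+1}$; this is conjugation by $w_0$, hence fixes $w_0$, so ${\bf i}\mapsto\tau({\bf i}):=(n+1-i_1,\dots,n+1-i_N)$ is an involution of $\Sigma_{n+1}$. It preserves the condition $|i-j|>1$, hence preserves $2$-moves and commutation classes, and it sends $D_n$ to $A_n$. Thus, granting that every ${\bf i}$ is related by $2$-moves to a word containing the block $D_n$, applying $\tau$ to such a word for $\tau({\bf i})$ yields a word $\sim{\bf i}$ containing the block $A_n$. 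So it suffices to isolate $D_n$.

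Fix ${\bf i}\in\Sigma_{n+1}$ and consider the wire $\ell_{n+1}$ in $G({\bf i})$. Since $w_0(n+1)=1$, this wire runs from $U_{n+1}$ (the rightmost top end, at column-position $n+1$) down to $L_{n+1}$ (the leftmost bottom end, at column-position $1$), so it crosses each of the remaining $n$ wires exactly once. List these crossings top-to-bottom as $c_1,\dots,c_n$; along a single monotone wire they occur in this order, hence in this order in the word. The $k$-th of them carries $\ell_{n+1}$ from column-position $n+2-k$ to column-position $n+1-k$, so $c_k$ lies in column $n+1-k$, and the column sequence is $(n,n-1,\dots,2,1)$. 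Therefore, if we can use $2$-moves to bring $c_1,\dots,c_n$ into consecutive positions of the word, the result is a word $\sim{\bf i}$ in which the block at those positions is exactly $D_n$, which is the claim (with $u$ equal to the number of letters preceding $c_1$). The statement for $A_n$ is obtained by the symmetry just described (alternatively, by running the same argument on the wire $\ell_1$, whose $n$ crossings lie in columns $1,2,\dots,n$).

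It remains to show that $c_1,\dots,c_n$ can be made consecutive. For $1\le j\le n-1$, in the portion of the word strictly between $c_j$ (column $n+1-j$) and $c_{j+1}$ (column $n-j$) the wire $\ell_{n+1}$ sits at column-position $n+1-j$, so any crossing occurring there lies in the diagram entirely to the left of $\ell_{n+1}$, in a column $\le n-j-1$, or entirely to the right, in a column $\ge n-j+2$ (columns $n-j$ and $n+1-j$ would meet $\ell_{n+1}$); call these the left- and right-crossings of the $j$-th gap. From these column bounds: a right-crossing differs in column from each of $c_{j+1},\dots,c_n$ (columns $n-j,n-j-1,\dots,1$) by at least $2$; a left-crossing differs in column from $c_j$ by at least $2$; and a left- and a right-crossing of the same gap differ by at least $3$. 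I clear the gaps from the bottom up. Suppose $c_{j+1},\dots,c_n$ have already been made consecutive. First, repeatedly take the lowest right-crossing of the $j$-th gap and slide it downward past the left-crossings below it in that gap and then past $c_{j+1},\dots,c_n$; each swap is a legal $2$-move, and afterwards the $j$-th gap contains only left-crossings while $c_{j+1},\dots,c_n$ is again consecutive. Then repeatedly take the highest left-crossing of the gap and slide it upward past $c_j$. After this the $j$-th gap is empty and $c_j,\dots,c_n$ are consecutive; iterating down to $j=1$ finishes the proof.

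The step I expect to require the most care is this last one, specifically the claim that processing the crossings of a gap in the order ``lowest right-crossing first, highest left-crossing first'' keeps every intermediate transposition a legal $2$-move: this ordering is exactly what prevents two crossings of the same type, which may lie in adjacent columns, from ever being swapped past one another. Besides this, one must check the routine bookkeeping that sliding a right-crossing through the block $c_{j+1},\dots,c_n$ only splits it momentarily and restores it, and that a left-crossing deposited above $c_j$ (respectively a right-crossing deposited below $c_n$) in one round is still a left-crossing (respectively not a right-crossing) of every later round, so that earlier rounds are never undone; both follow at once from the column inequalities recorded above.
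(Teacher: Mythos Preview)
Your proof is correct and follows essentially the same approach as the paper: track the crossings along $\ell_{n+1}$, observe that the letters in each gap avoid the two columns adjacent to $\ell_{n+1}$, and clear the gaps from the bottom up by $2$-moves, splitting each gap into ``small'' letters that slide above $c_j$ and ``large'' letters that slide below the block $c_{j+1},\dots,c_n$. The only notable difference is that you derive the $A_n$ statement from the $D_n$ one via the Dynkin involution $i\mapsto n+1-i$, whereas the paper simply repeats the argument with $\ell_1$ in place of $\ell_{n+1}$.
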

	
	\begin{proof}
		We provide an algorithmic proof pictorially as in Figure \ref{figure_proof}.
		Note that the $(n+1)$th wire $\ell_{n+1}$ crosses each wire exactly once so that there are $n$ nodes lying on $\ell_{n+1}$.
		We denote by $t_{j_1}, \dots, t_{j_n}$ the nodes on $\ell_{n+1}$ in order such that 
		\[
			i_{j_1} = 1, i_{j_2} = 2, \dots, i_{j_n} = n, \quad \quad j_1 > j_2 > \cdots > j_n.
		\]
		\vspace{-0.1cm}
		\begin{figure}[h]
			\scalebox{0.95}{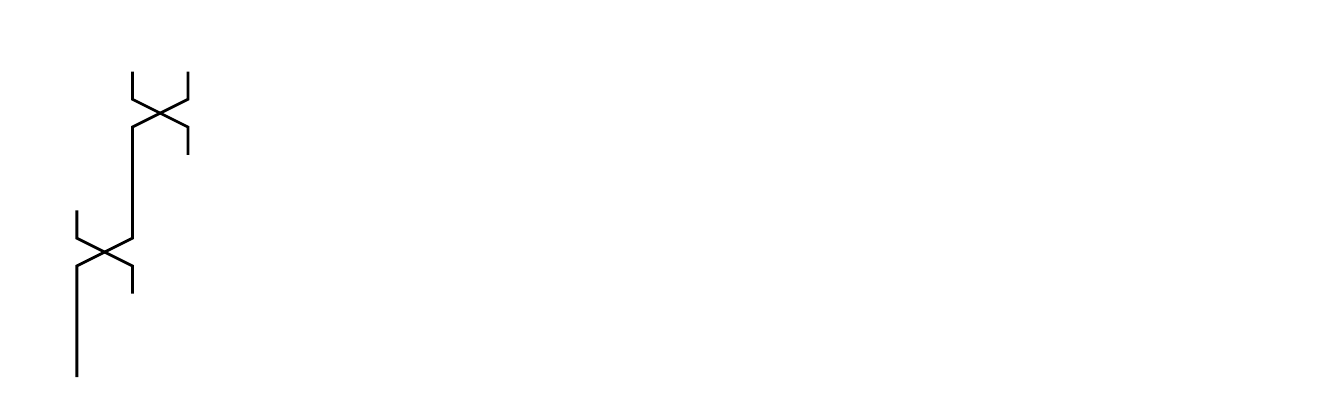}
			\caption{\label{figure_proof} Proof of Proposition \ref{proposition_D}.}
		\end{figure}	
		
		Neither $1$ nor $2$ can appear in the subword $(i_{j_2}, i_{j_2 + 1}, \dots, i_{j_2 + (j_1 - j_2)} = i_{j_1})$ of ${\bf i}$ except for $i_{j_1}$ and $i_{j_2}$ because 
		\begin{itemize}
			\item there is no other node on $\ell_{n+1}$ between $t_{j_1}$ and $t_{j_2}$, and 
			\item any node equal to 1 or 2 between $t_{j_1}$ and $t_{j_2}$ should be on $\ell_{n+1}$.
		\end{itemize}
		In other words, setting $B_1 = (i_{j_2 + 1}, \dots, i_{j_1 - 1})$, we see
		\[
			(i_1, \dots, \underbrace{i_{j_2}, \dots , i_{j_1}}_{=~(2,B_1,1)}, \dots, i_N), \quad \quad 1, 2 \not \in B_1.
		\]
		Applying 2-moves interchanging $B_1$ and $i_{j_1} (=1)$ successively, we get 
		\[
			(i_1, \dots, i_{j_2}, B_1, i_{j_1}, \dots, i_N) \quad \sim \quad 
			(i_1, \dots, i_{j_2}, i_{j_1}, B_1, \dots, i_N) =: {\bf i}_1 .
		\]
		Letting ${\bf i}_1 = (i_1', i_2',\dots, i_N')$, the new positions of $i_{j_k}$'s will be denoted by $(j_1', j_2' ,\dots, j_n')$. Namely,
		\[
			j_1' = j_1 - |B_1| = j_2 + 1, \quad j_k' = j_k \hs{0.1cm} \text{for $k \geq 2$.} 
		\]
				
		We then apply the same argument to ${\bf i}_1$. We see that there is neither 2 nor 3 in the subword $(i'_{j_3'}, i'_{j_3' + 1}, \dots,  i'_{j_2'}, i'_{j_1'})$ of ${\bf i}_1$ except for 
		$i'_{j_3'}$ and $i'_{j_2'}$, where $j_2' + 1 = j_1'$. See Figure \ref{figure_proof}. Thus we have
		\[
			(i'_1, \dots, \underbrace{i'_{j_3'}, \dots , i'_{j_2'}, i'_{j_1'}}_{=~(3,B_2,2,1)}, \dots, i'_N), \quad \quad 2, 3 \not \in B_2.
		\]
		Since every component of $B_2$ is either $1$ or greater than $3$, we may rearrange $B_2$ using 2-moves so that
		\[
			B_2 \quad \sim \quad B_2' = (B_2^-, B_2^+) \quad \text{where ~$B_2^- \leq 1$ ~and ~$B_2^+ \geq 4$.}
		\]
		Consequently, we obtain 
		\[
		\begin{split}
			(i'_1, \dots, i'_{j_3'}, B_2 , i'_{j_2'}, i'_{j_1'}, \dots, i'_N) \hs{0.1cm} &\sim \hs{0.1cm}
			(i'_1, \dots, i'_{j_3'}, B_2^-, B_2^+ , i'_{j_2'}, i'_{j_1'}, \dots, i'_N)\\
			 \hs{0.1cm} &\sim \hs{0.1cm} 
			(i'_1, \dots, B_2^-, \underbrace{i'_{j_3'}, i'_{j_2'}, i'_{j_1'}}_{=~(3,2,1)}, B_2^+ , \dots, i'_N).
		\end{split}
		\]
		We apply this procedure inductively and complete the proof for the first statement of the proposition.
		
		The second statement is also similarly proved, where the difference with the previous argument
		is that we need to use the first wire $\ell_1$ with the exactly $n$ nodes on it, instead of $\ell_{n+1}$. 
	\end{proof}
	
	\begin{example}
		Consider the reduced word 
		\[
			{\bf i} = (1,2,3,4,5,1,2,3,4,1,2,3,1,2,1) \in \Sigma_6. 
		\]
		Then, we have 
		\[
			\begin{array}{lll}
				{\bf i} = (1,2,3,4,5,1,2,3,4,1,2,3,1, \underline{2,1}) & \sim & (1,2,3,4,5,1,2,3,4,1,2,3,1,\underline{2,1}) := {\bf i}_1, \\
				{\bf i}_1 = (1,2,3,4,5,1,2,3,4,1,2,\underline{3,1,2,1}) & \sim & (1,2,3,4,5,1,2,3,4,1,2,\underline{1,3,2,1}) := {\bf i}_2, \\
				{\bf i}_2 = (1,2,3,4,5,1,2,3,\underline{4,1,2,1,3,2,1}) & \sim & (1,2,3,4,5,1,2,3,\underline{1,2,1,4,3,2,1}) := {\bf i}_3, \\
				{\bf i}_3 = (1,2,3,4,\underline{5,1,2,3,1,2,1,4,3,2,1}) & \sim & (1,2,3,4,\underline{1,2,3,1,2,1,5,4,3,2,1}) := {\bf i}_4. \\
			\end{array} 
		\]
		Thus we obtain ${\bf i} \sim (\underbrace{1,2,3,4,1,2,3,1,2,1}_{{\bf i}_D^-},\underbrace{5,4,3,2,1}_{D_5})$ and ${\bf i}_D^+ = \emptyset$.
	\end{example}
	
	\begin{definition}\label{definition_index}
		For a given ${\bf i} \in \Sigma_{n+1}$, by Proposition \ref{proposition_D}, we have 
		\[
			{\bf i} \quad \sim \quad {\bf i}_D^- ~D_n ~{\bf i}_D^+ \quad \sim \quad {\bf i}_A^- ~A_n ~{\bf i}_A^+.
		\]
		We denote by 
		\[
			\begin{array}{lll}
				\mathrm{ind}_D({\bf i}) := |{\bf i}_D^+|, & & \mathrm{ind}_A({\bf i}) := |{\bf i}_A^+|, \\
				\mathrm{coind}_D({\bf i}) := |{\bf i}_D^-|, & & \mathrm{coind}_A({\bf i}) := |{\bf i}_A^-|, \\
			\end{array}
			\quad \quad \text{where $|w| := \text{the length of $w$}$}
		\]
		and call them a {\em $D$-(co)index} and an {\em $A$-(co)index}, respectively. 
	\end{definition} 
	
	\begin{proposition}\label{proposition_well_defined}
		The notions $\mathrm{ind}_D({\bf i})$, $\mathrm{ind}_A({\bf i})$, $\mathrm{coind}_D({\bf i})$, and $\mathrm{coind}_A({\bf i})$ are well-defined.
	\end{proposition}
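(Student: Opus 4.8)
The plan is to prove that the decomposition furnished by Proposition~\ref{proposition_D} is essentially unique at the level of the wiring diagram: for any two decompositions ${\bf i} \sim {\bf i}_D^-\, D_n\, {\bf i}_D^+$, the $n$ nodes constituting the middle block $D_n$ coincide, and each of the remaining nodes is forced to lie on a prescribed side of that block. Granting this, $|{\bf i}_D^+|$ and $|{\bf i}_D^-|$ --- hence $\mathrm{ind}_D({\bf i})$ and $\mathrm{coind}_D({\bf i})$ --- do not depend on the chosen decomposition, and the statements for $\mathrm{ind}_A$ and $\mathrm{coind}_A$ follow from the symmetric argument in which $\ell_{n+1}$ is replaced by $\ell_1$.

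First I would identify the $D_n$-block. Given ${\bf i} \sim {\bf i}_D^-\, D_n\, {\bf i}_D^+$, reading the block $D_n = (n, n-1, \dots, 1)$ from left to right and tracking strands shows that exactly one strand takes part in all $n$ crossings of the block --- the strand occupying position $n+1$ when the block begins, which then slides down to position $1$ --- while each of the other $n$ strands meets the block in a single crossing. Because $G({\bf i})$ is a $w_0$-wiring diagram, every pair of wires crosses exactly once; since the mobile strand already crosses all $n$ other wires inside the block, it has no crossing at all inside ${\bf i}_D^-$ or ${\bf i}_D^+$, so it runs straight through both of them. This forces it to start at position $n+1$ at the top and to end at position $1$ at the bottom, i.e.\ to be the wire $\ell_{n+1}$. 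Hence $\node(D_n)$ is exactly the set of $n$ nodes lying on $\ell_{n+1}$, which depends only on ${\bf i}$; and, reading $D_n$, these $n$ nodes occupy the $n$ distinct columns $1, \dots, n$.

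Next I would show the splitting of the remaining $N - n$ nodes is forced. A $2$-move only interchanges adjacent letters differing by more than one, so it never alters the relative order of two occurrences of the same letter; consequently, for a node $t \notin \node(D_n)$ in column $c$, the relative order of $t$ and the unique node $t_c \in \node(D_n)$ in column $c$ is the same in every reduced word of the commutation class of ${\bf i}$. Since $\node(D_n)$ is a contiguous block in ${\bf i}_D^-\, D_n\, {\bf i}_D^+$ and $t$ is not in it, $t$ lies in ${\bf i}_D^-$ exactly when $t$ precedes $t_c$ and in ${\bf i}_D^+$ exactly when $t$ follows $t_c$, and this dichotomy is independent of the decomposition. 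Therefore the triple $\big(\node({\bf i}_D^-),\ \node(D_n),\ \node({\bf i}_D^+)\big)$ is an invariant of ${\bf i}$, which yields the well-definedness of $\mathrm{ind}_D({\bf i}) = |{\bf i}_D^+|$ and $\mathrm{coind}_D({\bf i}) = |{\bf i}_D^-|$. For $A_n = (1, 2, \dots, n)$ the same argument applies after exchanging the roles of positions $1$ and $n+1$, so that the mobile strand of the block is $\ell_1$ and $\node(A_n)$ is the set of $n$ nodes on $\ell_1$.

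The hardest part will be the first step --- recognizing $\node(D_n)$ as the set of nodes on $\ell_{n+1}$ --- since it is the only place where the global property of a $w_0$-diagram (each pair of wires crossing exactly once) enters, and one must simultaneously deduce that the mobile strand of the block enters at position $n+1$ and exits at position $1$, so that it can only be $\ell_{n+1}$. The remaining steps are routine, using only that $2$-moves preserve the relative order of equal letters.
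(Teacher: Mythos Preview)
Your proposal is correct and follows essentially the same approach as the paper. The paper's proof observes that $\mathrm{ind}_D({\bf i})$ equals the number of nodes in $G({\bf i})$ below $\ell_{n+1}$ (and similarly for the others), and then notes that $2$-moves preserve the combinatorial type of the wiring diagram, hence the relative position of each node with respect to a given wire; your argument is the combinatorial unpacking of exactly this, identifying the $D_n$-block with the nodes on $\ell_{n+1}$ and replacing ``above/below $\ell_{n+1}$'' by the equivalent condition on the relative order of $t$ and the unique same-column node $t_c \in \node(D_n)$, which $2$-moves clearly preserve.
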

	
	\begin{proof}
		From the algorithm in the proof of Proposition \ref{proposition_D}, we can easily see that 
		\begin{itemize}
			\item $\mathrm{ind}_D({\bf i})$ = the number of nodes in $G({\bf i})$ below $\ell_{n+1}$, 
			\item $\mathrm{ind}_A({\bf i})$ = the number of nodes in $G({\bf i})$ below $\ell_{1}$, 
			\item $\mathrm{coind}_D({\bf i})$ = the number of nodes in $G({\bf i})$ above $\ell_{n+1}$,  
			\item $\mathrm{coind}_A({\bf i})$ = the number of nodes in $G({\bf i})$ above $\ell_{1}$.
		\end{itemize}	
		Also, since 2-moves do not change the combinatorial type of the wiring diagram $G({\bf i})$, i.e., 2-moves do not change the ``relative positions'' of a node and a wire, 
		the (co-)indices are well-defined up to 2-moves.
		This completes the proof.
	\end{proof}

\subsection{Generating reduced words}
\label{ssecGeneratingReducedWords}

	From Proposition \ref{proposition_D}, we can define the following two types of operations, called the {\em contraction} and the {\em extension}. 
	\begin{definition}\label{definition_contraction_extension} 
		For any ${\bf i} \in \Sigma_{n+1}$, let ${\bf i}_D^-$ and $~ {\bf i}_D^+$ (respectively, ${\bf i}_A^-$ and $~ {\bf i}_A^+$) 
		be given in Proposition \ref{proposition_D} such that 
		${\bf i}_D^-  ~D_n ~ {\bf i}_D^+$ (respectively, ${\bf i}_A^-  ~A_n ~ {\bf i}_A^+$) 
		is obtained from ${\bf i}$ using 2-moves as least as possible. Then the {\em $D$-contraction} (respectively, {\em $A$-contraction}) is defined by
	\[
		\begin{array}{ccccl}
			C_D & \colon & \Sigma_{n+1} & \rightarrow & \Sigma_n \\ 
				&   &   {\bf i} & \mapsto & {\bf i}_D^- \left({\bf i}_D^+ - 1\right)
		\end{array}
		\quad \left(
		\begin{array}{ccccl}
			C_A & \colon & \Sigma_{n+1} & \rightarrow & \Sigma_n \\ 
				&   &   {\bf i} & \mapsto & ({\bf i}_A^- -1) {\bf i}_A^+.
		\end{array}
		\right)		
	\]
	Conversely, for $0 \leq s \leq N$, a {\em $D$-extension at $s$} is defined by 
	\[
		\begin{array}{ccccl}
			E_D(s) & \colon & \Sigma_{n+1} & \rightarrow & \Sigma_{n+2} \\ 
				&   &   {\bf i} = (\underbrace{i_1, \dots, i_{N-s}}_{=: ~{\bf i}^-(s)}, \underbrace{i_{N-s+1}, \dots, i_N}_{=: ~ {\bf i}^+(s)}) 
				& \mapsto & {\bf i}^-(s) ~D_{n+1} ~ ({\bf i}^+(s)+1).
		\end{array}		
	\]
	Similarly, we define an {\em $A$-extension at $s$} such that
	\[
		\begin{array}{ccccl}
			E_A(s) & \colon & \Sigma_{n+1} & \rightarrow & \Sigma_{n+2} \\ 
				&   &   {\bf i} = (\underbrace{i_1, \dots, i_{N-s}}_{=: ~{\bf i}^-(s)}, \underbrace{i_{N-s+1}, \dots, i_N}_{=: ~ {\bf i}^+(s)}) 
				& \mapsto & ({\bf i}^-(s) +1) ~A_{n+1} ~ {\bf i}^+(s).
		\end{array}		
	\]
	\end{definition}	

	\begin{example}\label{example_contraction_extension} 
		We illustrate some examples of an extension as follows. 
		\begin{figure}[h]
			\scalebox{1}{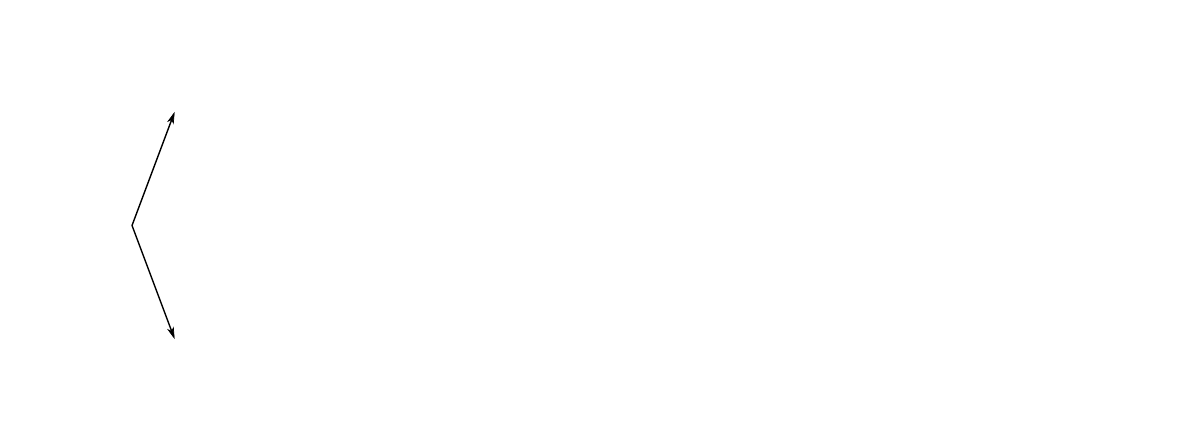}
		\end{figure}
		
	\noindent	
	For an example of a contraction, let us consider the following.
	\[
		\begin{array}{rll}
			{\bf i} = (2,1,4,3,5,4,2,1,3,\underline{2,5,4,3,5,1}) & \rightarrow & (2,1,4,3,5,4,2,1,3,\underline{2,1,5,4,3,5}),  \\
				  (2,1,4,3,5,4,2,1,\underline{3,2,1},5,4,3,5) & \rightarrow & (2,1,4,3,5,4,2,1,\underline{3,2,1},5,4,3,5), \quad \text{nothing changed}  \\						
				  (2,1,4,3,5,\underline{4,2,1,3,2,1},5,4,3,5) & \rightarrow & (2,1,4,3,5,\underline{2,1,4,3,2,1},5,4,3,5),  \\						
				  (2,1,4,3,\underline{5,2,1,4,3,2,1},5,4,3,5) & \rightarrow & (2,1,4,3,\underline{2,1,5,4,3,2,1},5,4,3,5),  \\				
		\end{array}
	\]
	Then ${\bf i}_D^- = (2,1,4,3,2,1)$ and ${\bf i}_D^+ = (5,4,3,5)$ and hence we get 
	\[
		C_D({\bf i}) = {\bf i}_D^- ({\bf i}_D^+ -1) = (2,1,4,3,2,1,4,3,2,4). 
	\]
	Furthermore, we can 
 check that $C_D^2({\bf i}) = C_D(C_D({\bf i})) = (2,1,3,2,1,3)$, $C_D^3({\bf i}) = (2,1,2)$, and $C_D^4({\bf i}) = (1)$.
	\end{example}

	\begin{remark}\label{remark_well_define}
		To explain why an extension or a contraction sends a reduced word to a reduced word, it is rather straightforward to explain using a wiring diagram as follows. 
		For ${\bf i} \in \Sigma_{n+1}$, we may restrict the wiring diagram $G(\bf {i})$ to $n$ wires $\ell_1, \dots, \ell_n$, removing $\ell_{n+1}$.
		The restriction also becomes a wiring diagram (for some reduced word in $\Sigma_n$) since each pair of wires still intersects at exactly one node.
		Roughly speaking, a $D$- or $A$-contraction corresponds to `removing $\ell_{n+1}$ or $\ell_1$', respectively, 
		and a $D$- or $A$-extension corresponds to `inserting a new wire $\ell_{n+2}$ or $\ell_1$', respectively.
	\end{remark}

	Finally, we give a remark on commutation classes in $\widetilde{\Sigma}_{n+1}$. 
	Using our notations, for $\bullet = A$ or $D$ we obtain the following sequence of maps
	\[
		\begin{array}{ccccl}\vs{0.1cm}
			\Sigma_{n+1} \times [0, N]& \stackrel{E_\bullet} \longrightarrow & \Sigma_{n+2} & \stackrel{\pi} \longrightarrow & \widetilde{\Sigma}_{n+2} \\ \vs{0.1cm}
			({\bf i}, s) & \mapsto & E_\bullet(s)({\bf i}) & \mapsto & \left[ E_\bullet(s)({\bf i}) \right]
		\end{array}
	\]
	where $\pi$ is the quotient map. Then Proposition \ref{proposition_D} implies that the composition $\pi \circ E_\bullet$ is surjective for $\bullet = A$ or $D$. 
	We can also easily check that the map 
	\[
		\begin{array}{ccl}\vs{0.1cm}
			\Sigma_{n+2} & \stackrel{(C_\bullet, \mathrm{ind}_\bullet)} \longrightarrow & \Sigma_{n+1} \times [0, N] \\ \vs{0.1cm}
			{\bf i} & \mapsto & (C_\bullet({\bf i}), \mathrm{ind}_\bullet({\bf i})) 
		\end{array}
	\]
	is surjective and $E_\bullet$ is a section of the map $(C_\bullet, \mathrm{ind}_\bullet)$, i.e., 
	\[
		(C_\bullet, \mathrm{ind}_\bullet) \circ E_\bullet = \mathrm{id}_{\Sigma_{n+1} \times [0, N]}
	\]
	for $\bullet = A$ or $D$. On the other hand, the map $E_\bullet$ is not surjective in general, however, one can show that 
	\begin{equation}\label{equation_commutation}
		E_\bullet  \circ (C_\bullet, \mathrm{ind}_\bullet) ({\bf i}) \sim {\bf i}
	\end{equation}
	for $\bullet = A$ or $D$.

\vspace{0.1cm}
\section{Non-redundancy of string inequalities}
\label{secNonRedundancyOfStringInequalities}

The aim of this section is to prove that every string inequality and $\lambda$-inequality for the string polytope associated to a regular dominant weight is non-redundant. 
For each reduced word  $\bf i$ of the longest element $w_0$ and each choice of regular dominant weight $\lambda$, we shall show that there is a one-to-one correspondence between the facets of $\Delta_{\bf i}(\lambda)$ and the inequalities in~\eqref{equation_stringcone} and~\eqref{equation_lambdacone}.
The main ingredient for the proof is the \emph{chamber variables} on the wiring diagram $G({\bf i})$ associated to ${\bf i}$, which will be introduced in Section~\ref{ssecChangeOfCoordinatesChamberVariables}.

\subsection{Chamber variables}
\label{ssecChangeOfCoordinatesChamberVariables}
	
		The wiring diagram $G({\bf i})$ divides the region into several (bounded or unbounded) {\em chambers}. 
		For each node $t_j$ in $G({\bf i})$, we denote by ${\scr C}_j$ the {\em chamber} having $t_j$ as a peak as depicted in Figure~\ref{figure_chamber_variable}. 
		For each chamber ${\scr C}_j$, let $I_j$ be the set of nodes contained in ${\scr C}_j$. Then $I_j$ is decomposed 
		into the disjoint union $I_j^+ \cup I_j^-$ where 
		\begin{itemize}
			\item $I_j^+$ consists of nodes in ${\scr C}_j$ in the same column as $t_j$,
			\item $I_j^-$ consists of nodes in ${\scr C}_j$ one column to the right or left of  $t_j$.
		\end{itemize}
		
		\begin{definition}\label{definition_chamber_variable}
			For each node $t_j$ in $G({\bf i})$, we define
			\begin{equation}\label{equation_chambervar}
				u_j := \sum_{t_k \in I_j^+} t_k - \sum_{t_k \in I_j^-} t_k
			\end{equation}
			and call it {\em the $i$th chamber variable.}
		\end{definition}		

		\vspace{-0.1cm}
		\begin{figure}[h]
			\scalebox{0.9}{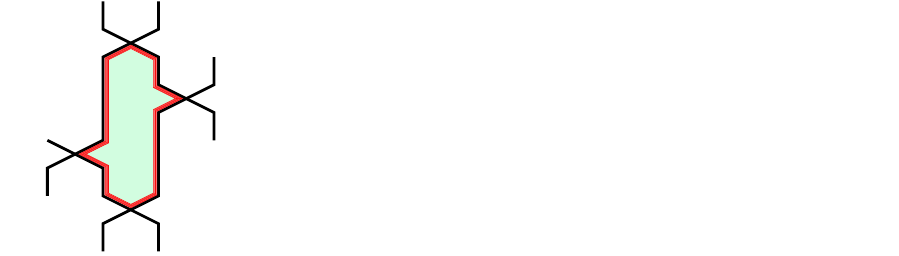}
			\vspace{-0.2cm}
			\caption{\label{figure_chamber_variable} Change of coordinates: chamber variables.}
		\end{figure}
			\vspace{-0.1cm}

		Under the change of coordinates $(t_1, \dots, t_N) \rightarrow (u_1, \dots, u_N)$ in~\eqref{equation_chambervar}, every string inequality associated to a rigorous path $P$
		can be expressed as 
		\[
		\sum_{\scr{C}_j \text{ is in a chamber enclosed by $P$}} u_j \geq 0.
		\]
		Also, each $\lambda$-inequality is written as 
		\begin{equation}\label{equation_change_coordinate}
			t_j \leq \lambda_{i_j} + \sum_{k > j} a_k t_k \quad \quad \Longleftrightarrow \quad \quad \sum_{\substack{k \geq j, \, i_{k} = i_j}} u_k \leq \lambda_{i_j}
		\end{equation}
		for $j = 1,\dots,N$. Here, we note that $i_k = i_j$ if and only if the nodes $t_{k}$ and $t_j$ are in the same column of $G({\bf i})$.

\begin{remark}		
		One notable feature of the expression of string inequalities and $\lambda$-inequalities in terms of chamber variables is that every coefficient of any chamber variable is either \emph{one} or \emph{zero}. This feature will be crucial for the proof of non-redundancy of the inequalities in~\eqref{equation_stringcone} and~\eqref{equation_lambdacone}.
\end{remark}
		
		\begin{example}
			Let us revisit Example~\ref{example_string_cone_GC}. As an example, we compare four inequalities in terms of node variables and chamber variables.
			\[
			\begin{array}{lllll}
				\mbox{\,} &\, &\mbox{Node variables} & \,  & \mbox{Chamber variables} \\
				\mbox{Figure}~\ref{figure_nodechamber_a}. &\, & t_1 \geq 0 & \Longleftrightarrow & u_1 + u_2 + u_3 + u_4 \geq 0 \\								
				\mbox{Figure}~\ref{figure_nodechamber_b}. &\, & t_2 + t_3 - t_4 \geq  0 & \Longleftrightarrow &  u_2 + u_3 + u_4 \geq 0 \\
				\mbox{Figure}~\ref{figure_nodechamber_c}. &\, & t_1 \leq \lambda_2 + t_2 + t_3 - 2 t_4 + t_5 + t_6 & \Longleftrightarrow & u_1 + u_4 \leq \lambda_2 \\
				\mbox{Figure}~\ref{figure_nodechamber_d}. &\, & t_3 \leq \lambda_3 + t_4 - 2 t_5 & \Longleftrightarrow & u_3 + u_5 \leq \lambda_3 \\								
			\end{array}
		\] 	
			\begin{figure}[h]
				\centering
				\begin{subfigure}[b]{0.1\textwidth}
	\scalebox{0.92}{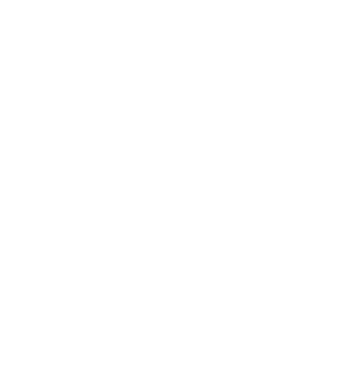}
\end{subfigure}				
				\begin{subfigure}[b]{0.2\textwidth}
{\makebox[0.55cm][r]{\makebox[0pt][l]{	\scalebox{0.92}{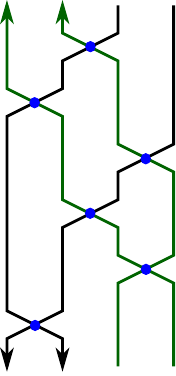}}}}
\caption{ \label{figure_nodechamber_a}}
\end{subfigure}
				\begin{subfigure}[b]{0.2\textwidth}
					\centering
	\scalebox{0.92}{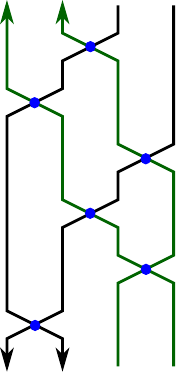}
\caption{ \label{figure_nodechamber_b}}
\end{subfigure}
				\begin{subfigure}[b]{0.2\textwidth}
					\centering
	\scalebox{0.92}{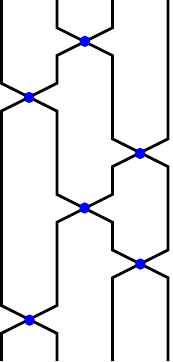}
	\caption{ \label{figure_nodechamber_c}}
\end{subfigure}
				\begin{subfigure}[b]{0.2\textwidth}
					\centering
	\scalebox{0.92}{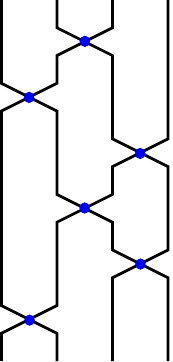}
\caption{ \label{figure_nodechamber_d}}
\end{subfigure}
		\caption{\label{figure_nodechamber} Node variables and chamber variables.}
	\end{figure}
	\vspace{-0.2cm}
		\end{example}

\subsection{Non-redundancy of the string and $\lambda$-inqualities}\label{subsecNonRedundancyOfStringInequalities}
Let $Q \subset \R^N$ be a full-dimensional polyhedron defined by the system of inequalities 
\begin{equation}\label{equation_phex}
\{ \rho_j ({\bf u}) \geq 0 \mid j = 1, \dots, \nu \}
\end{equation}
where $\mathbf{u} = (u_1,\dots,u_N)$ is a coordinate for $\R^N$.
We say that the inequality $\rho_k(\mathbf{u}) \geq 0$ is \emph{redundant} in $Q$ if it can be removed from the system. If every inequality in the system is not redundant, then 
we say that the system is {\em non-redundant.}
Equivalently, the system \eqref{equation_phex} is non-redundant if and only if the number of facet of $Q$ equals $\nu$.

We first show that the string inequalities given in \eqref{equation_stringcone} are not redundant.
Let us fix any reduced expression $\bf i$ of the longest element $w_0 \in \frak{S}_{n+1}$. 
Since the face structure of $C_{\bf i}$ is preserved under the change of variables in Definition~\ref{definition_chamber_variable}, 
it is enough to prove that string inequalities expressed in terms of  chamber variables in~\eqref{equation_chambervar} are non-redundant.

We assume that the string inequalities in \eqref{equation_stringcone} are given by 
\[ 
	\rho_1({\bf u}) \geq 0, \, \rho_2 ({\bf u}) \geq 0, \dots,  \rho_\nu ({\bf u}) \geq 0.
\]
where ${\bf u}$ denotes the coordinates of chamber variables introduced in \eqref{equation_chambervar}.  
Let ${\bf v}_j$ be the primitive inward normal vector to the hyperplane $\rho_j ({\bf u}) = 0$ in $C_{\bf i}$, i.e., $\rho_{j}(\mathbf{u}) = \langle {\bf v}_j, \bf{u} \rangle$.
We shall prove that each inequality $\rho_j({\bf u}) \geq 0$ is non-redundant in $C_{\bf i}$, which is equivalently saying that the dual cone of $C_{\bf i}$ has exactly $\nu$ rays.

\begin{lemma}\label{lemma_fnncc}
Any facet normal vector of $C_{\bf i}$ is \emph{not} contained in the cone generated by the others. 
\end{lemma}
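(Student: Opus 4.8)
The plan is to convert Lemma~\ref{lemma_fnncc} into a question about the cone generated by the normal vectors and then to exploit the special shape of those vectors in chamber coordinates. By Farkas' lemma (equivalently, linear programming duality), for the full-dimensional cone $C_{\bf i}$ the inequality $\rho_j({\bf u})\ge 0$ is redundant if and only if ${\bf v}_j$ lies in the cone generated by $\{{\bf v}_k : k\ne j\}$; hence Lemma~\ref{lemma_fnncc} is precisely the non-redundancy statement, and it is equivalent to exhibiting, for each $j$, a vector ${\bf u}^{(j)}$ with $\langle{\bf v}_j,{\bf u}^{(j)}\rangle<0$ and $\langle{\bf v}_k,{\bf u}^{(j)}\rangle\ge 0$ for all $k\ne j$. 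First I would pass to the chamber coordinates of Section~\ref{ssecChangeOfCoordinatesChamberVariables}. As recorded there, in these coordinates every string inequality reads $\sum_{{\scr C}_m\in S_j}u_m\ge 0$, where $S_j$ is the set of chambers enclosed by the rigorous path $P_j$; thus each ${\bf v}_j$ is the $0/1$ vector whose support is $S_j$. So it suffices to prove: no $0/1$ vector $\mathbf{1}_{S_j}$ is a nonnegative linear combination of the vectors $\mathbf{1}_{S_k}$, $k\ne j$.

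Next I would carry out the support-comparison step. Suppose to the contrary that $\mathbf{1}_{S_j}=\sum_{k\ne j}c_k\,\mathbf{1}_{S_k}$ with all $c_k\ge 0$, and let $K=\{k : c_k>0\}$. Reading off the coordinate of a chamber outside $S_j$ forces that chamber to lie outside $S_k$ for every $k\in K$, so $S_k\subseteq S_j$ for all $k\in K$; these inclusions are proper since the inequalities $\rho_k$ are pairwise distinct. Reading off the coordinate of a chamber ${\scr C}_m\in S_j$ gives $\sum_{k\in K,\ {\scr C}_m\in S_k}c_k=1$. Consequently such an identity would produce an \emph{exact fractional cover} of the chamber set $S_j$ by the strictly smaller chamber sets $\{S_k : S_k\subsetneq S_j\}$. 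The heart of the proof is therefore the purely combinatorial claim that no such cover exists: the region enclosed by a rigorous path cannot be exactly fractionally covered by the regions enclosed by rigorous paths strictly contained in it.

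To establish this combinatorial core I would argue by peeling $S_j$ from an extremal chamber. A natural candidate is the chamber ${\scr C}_{m_0}$ whose peak $t_{m_0}$ is the topmost peak of $P_j$ (with ties broken, say, by taking the leftmost such peak); one checks directly from the definition that ${\scr C}_{m_0}\in S_j$, and one then analyzes which rigorous paths $P_k$ — in any of the oriented diagrams $G({\bf i},\ell)$ — can have $S_k\subseteq S_j$ together with ${\scr C}_{m_0}\in S_k$. The orientation-monotonicity of a rigorous path away from its peaks, together with the prohibition of the forbidden fragments of Figure~\ref{figure_avoiding}, should force such a $P_k$ to run along the same arc of the boundary of $S_j$ near $t_{m_0}$; iterating this ``from the top down'' one expects to pin down the fractional cover on an ever-shrinking extremal portion of $S_j$ and reach a contradiction with the normalization $\sum_{k\in K,\ {\scr C}_{m}\in S_k}c_k=1$. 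Equivalently, and perhaps more transparently, one can try to build the separating vector ${\bf u}^{(j)}$ directly: take it supported on the chambers of $\overline{S_j}$, with value $-1$ on a suitable extremal chamber such as ${\scr C}_{m_0}$, a large positive value on one further chamber of $S_j$, and small positive values elsewhere, and use the same rigidity of rigorous paths to check that $\langle{\bf v}_k,{\bf u}^{(j)}\rangle\ge 0$ for every $k\ne j$.

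The step I expect to be the main obstacle is exactly this rigidity analysis of nested rigorous-path regions. The difficulty is that the inequalities are indexed by pairs $(P,\ell)$ with $\ell$ ranging over $[n]$, so a competing path $P_k$ need not even share the endpoints $L_\ell, L_{\ell+1}$ with $P_j$; one must therefore show, uniformly in $\ell$, that a rigorous path whose enclosed region lies inside $S_j$ has very limited freedom near the extremal parts of $S_j$. Once this geometric lemma about Gleizer--Postnikov paths is in hand, the remainder is the short reduction recorded above. Finally, I would note that the non-redundancy of the $\lambda$-inequalities follows by the same bookkeeping: by~\eqref{equation_change_coordinate} each $\lambda$-inequality is again a $0/1$ sum of chamber variables bounded above by $\lambda_{i_j}$, and here the regularity hypothesis $\lambda_i\ne 0$ is what prevents such an inequality from being forced to an equality and hence becoming vacuous.
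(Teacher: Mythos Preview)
Your reduction to chamber coordinates is correct, and so is the observation that any $k$ with $c_k>0$ must satisfy $S_k\subsetneq S_j$. But you then misjudge the difficulty of the remaining step and leave a genuine gap: the ``exact fractional cover'' claim is never proved, and the peeling argument via topmost peaks (or the alternative construction of a separating ${\bf u}^{(j)}$) is only sketched in the vaguest terms. The missing idea, which finishes the proof in two lines, is to look at the \emph{unbounded} chambers. A rigorous path in $G({\bf i},\ell)$ encloses exactly one unbounded chamber, namely $\scr{U}_\ell$; since $S_k\subseteq S_j$ and $S_k$ contains its own unbounded chamber, that chamber must equal the one in $S_j$, so $\scr{U}_\ell\in S_k$ for every $k\in K$. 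Evaluating your cover identity at $\scr{U}_\ell$ gives $\sum_{k\in K}c_k=1$. Now pick any $k_0\in K$ and any $m_0\in S_j\setminus S_{k_0}$; the identity at $m_0$ yields $1=\sum_{k\in K,\ m_0\in S_k}c_k\le\sum_{k\in K\setminus\{k_0\}}c_k=1-c_{k_0}<1$, a contradiction.

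This is essentially the paper's argument, phrased in your language: the paper reads off the normalization $\sum c_k=1$ directly from the unbounded-chamber coordinate in \eqref{equ_vicom}, then compares $P_j$ with a single competitor to find a distinguishing chamber. So your ``main obstacle'' is not an obstacle at all once you notice that every competing region must contain the common unbounded chamber; the rigidity analysis of nested Gleizer--Postnikov regions that you propose is unnecessary, and as you present it, unproved. (Also, your final paragraph on $\lambda$-inequalities is outside the scope of this lemma and does not match how the paper handles them: the $\lambda$-cone is simplicial by an upper-triangular argument, not by a $0/1$ support argument.)
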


\begin{proof}
Without any loss of generality, it is enough to prove that ${\bf v}_1$ is not in the cone generated by ${\bf v}_2, \cdots, {\bf v}_\nu$. 
Assume on the contrary that there exist $a_2,\dots,a_{\nu} \in \R_{\geq 0}$ such that
\begin{equation}\label{equ_vicom}
{\bf v}_1 =a_2  {\bf v}_2 + \cdots + a_\nu  {\bf v}_\nu
\end{equation}
for some $a_2,\dots,a_{\nu} \in \R_{\geq 0}$. 

Suppose that $\rho_1$ is obtained from a rigorous path $P_1$ starting from the lower end $L_{n+1-k}$ to the lower end $L_{n+2-k}$. 
For each $i = 1, \dots, n$, we denote by $\scr{U}_{i}$ a (unique) unbounded chamber containing both $L_{n+1-i}$ and $L_{n+2-i}$ and 
let $u_{(i)}$ be the chamber variable corresponding to $\scr{U}_{i}$.  
(For instance, in Figure~\ref{figure_nodechamber}, there are three chamber variables corresponding to unbounded chambers$\colon u_{(1)} = u_6, u_{(2)} = u_4,$ and $u_{(3)} = u_5$.) 

Notice that 
\begin{equation}\label{equation_coef}
\mbox{ 
(the coefficient of $u_{(i)}$ in ${\rho}_1$) 
$= 
\begin{cases} 
1 &\mbox{if $i = k$}
\\ 0 &\mbox{if $i \neq k$.} 
\end{cases}
$} 
\end{equation}
In other words, the entry of $\mathbf{v}_1$ corresponding to $u_{(i)}$ is $1$ if $i = k$ and $0$ otherwise.

Let $\{r_1, \dots, r_u\} \subset [2,\nu]$ be the set of indices such that $a_{r_i} \neq 0$. 
By comparing the coefficients of $u_{(k)}$ of the left hand side and the right hand side in~\eqref{equ_vicom}, we obtain 
\begin{equation}\label{equation_sumofcoeff}
1 = a_{r_1} + a_{r_2} + \cdots + a_{r_u}.
\end{equation}
Observe that each $\rho_{r_i}$'s are obtained from rigorous paths starting from the lower end $L_{n+1-k}$ to the lower end $L_{n+2-k}$. 
(Otherwise, there exists some $r_i$ such that $\rho_{r_i}$ possesses a non-zero unbounded chamber variable term for some $u_{(j)}$ with $j \neq k$ and this implies that $\rho_1$ also contains 
the non-zero term for $u_{(j)}$ which leads to a contradiction to \eqref{equation_coef}.) 

Let $P_{r_1}$ be the rigorous path corresponding to $\rho_{r_1}$. Then there exists a chamber $\scr{C}_{r_1}$ with the chamber variable, say $\widetilde{u}$, such that 
\begin{enumerate}
	\item $\scr{C}_{r_1}$ is contained in the region bounded by $P_{r_1}$ but not in one bounded by $P_1$, or 
	\item $\scr{C}_{r_1}$ is contained in the region bounded by $P_1$ but not in one bounded by $P_{r_1}$.
\end{enumerate}
In case of (1), the coefficient for $\widetilde{u}$ on the left hand side of \eqref{equ_vicom} is zero but is not on the right one, and therefore it cannot be happened.
Also for (2) case, the coefficient for $\widetilde{u}$
on the left had side of \eqref{equ_vicom} is one, but the one on the left hand side of \eqref{equ_vicom} is less than one since
\[
	0 < a_{r_2} + \cdots + a_{r_u} < 1
\]
and this yields a contradiction. 
\end{proof}

\begin{proposition}\label{proposition_stringconenr}
Let $C_{\bf i}$ be the string cone associated with a reduced word  $\textbf{\textup{i}}$ of the longest element in the Weyl group of $\SL_{n+1}(\C)$. Then the expression~\eqref{equation_stringcone} is non-redundant in $C_{\bf i}$.
\end{proposition}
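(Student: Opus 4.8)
\textit{Proof plan.} The plan is to derive Proposition~\ref{proposition_stringconenr} as an immediate consequence of Lemma~\ref{lemma_fnncc} together with the standard duality for polyhedral cones. As recalled in the paragraph preceding Lemma~\ref{lemma_fnncc}, the passage from node variables to the chamber variables ${\bf u}=(u_1,\dots,u_N)$ of~\eqref{equation_chambervar} is an invertible linear change of coordinates that preserves the face structure of $C_{\bf i}$, so it suffices to work with the string inequalities $\rho_1({\bf u})\ge 0,\dots,\rho_\nu({\bf u})\ge 0$ written in chamber coordinates, with inward normals ${\bf v}_j$ given by $\rho_j({\bf u})=\langle {\bf v}_j,{\bf u}\rangle$. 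Since $C_{\bf i}$ is exactly the cone cut out by these homogeneous inequalities, its dual cone is $C_{\bf i}^{\vee}=\mathrm{cone}({\bf v}_1,\dots,{\bf v}_\nu)$, and by Farkas' lemma the inequality $\rho_k({\bf u})\ge 0$ is redundant in the system if and only if ${\bf v}_k\in\mathrm{cone}({\bf v}_j : j\ne k)$. Hence~\eqref{equation_stringcone} is non-redundant precisely when $C_{\bf i}^{\vee}$ has $\nu$ pairwise distinct extreme rays, namely $\R_{\ge 0}{\bf v}_1,\dots,\R_{\ge 0}{\bf v}_\nu$.

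Two points then remain. First, the ${\bf v}_j$ span pairwise distinct rays; this is immediate from the $0/1$-coefficient feature noted in the remark following~\eqref{equation_change_coordinate}, since two string inequalities all of whose coefficients lie in $\{0,1\}$ are proportional only if they are equal, whereas $\rho_1,\dots,\rho_\nu$ is the list of distinct string inequalities. Second, and this is the only real content, one needs ${\bf v}_k\notin\mathrm{cone}({\bf v}_j : j\ne k)$ for every $k$; but this is exactly Lemma~\ref{lemma_fnncc}, whose proof, although phrased for the index $1$, is symmetric in the indices and therefore applies to each $k$. Combining the two points, $C_{\bf i}^{\vee}$ has exactly $\nu$ extreme rays, which is the assertion of the proposition.

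The substantive obstacle has thus already been overcome in Lemma~\ref{lemma_fnncc}, where one exploits both the $0/1$-coefficient structure and the bookkeeping of the unbounded chamber variables $u_{(1)},\dots,u_{(n)}$ to locate, for any two competing rigorous paths $P_1$ and $P_{r_1}$, a bounded chamber lying in exactly one of the two enclosed regions, and thereby to reach the numerical contradiction $0<a_{r_2}+\cdots+a_{r_u}<1$. At the level of Proposition~\ref{proposition_stringconenr} itself there is no further difficulty; the only care needed is the routine observation that the normals ${\bf v}_j$ are pairwise distinct extreme rays of $C_{\bf i}^{\vee}$, so that the number of facets of $C_{\bf i}$ coincides with $\nu$.
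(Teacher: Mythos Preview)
Your proof is correct and follows essentially the same approach as the paper: both derive non-redundancy directly from Lemma~\ref{lemma_fnncc} via the observation that the dual cone $C_{\bf i}^{\vee}$ is generated by the ${\bf v}_j$, so that an inequality is redundant precisely when its normal lies in the cone of the others. Your write-up is more detailed (invoking Farkas explicitly and separately noting that the ${\bf v}_j$ are pairwise distinct, a point which is in fact already implied by Lemma~\ref{lemma_fnncc}), but the substance is identical to the paper's one-line argument.
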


\begin{proof}
	It is straightforward from Lemma \ref{lemma_fnncc} since the cone generated by $\{ {\bf v}_1, \cdots, {\bf v}_u \}$ is the dual cone of $C_{\bf i}$ by definition.
\end{proof}

Let us fix a \emph{regular dominant} weight $\lambda = \lambda_1 \varpi_1 + \cdots + \lambda_n \varpi_n$, i.e., $\lambda_i > 0$ for all $i$. 
Now, we show that each of string and $\lambda$-inequalities is non-redundant in $\Delta_{\bf i}(\lambda) = C^{\vphantom{\lambda}}_{\bf i} \cap C_{\bf i}^\lambda$. 
Recall that the $\lambda$-cone $C_{\bf i}^\lambda$ has apex whose chamber coordinates are given by $u_{(i)} = \lambda_i$ for $i = 1, \dots, n$ and the other chamber coordinates are zero. 

\begin{proposition}\label{proposition_stringpolytopenr}
Let $\Delta_{\bf i}(\lambda)$ be the string polytope associated to a reduced word  $\textbf{\textup{i}} \in \Sigma_{n+1}$.  
Then the expression in~\eqref{equation_stringcone} and~\eqref{equation_lambdacone} is non-redundant in $\Delta_{\bf i}(\lambda)$.
\end{proposition}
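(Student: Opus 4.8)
The plan is to bootstrap from the non-redundancy of the string cone already proved in Proposition~\ref{proposition_stringconenr}, using two elementary ``interior point'' facts that both depend on the regularity hypothesis $\lambda_i>0$. Throughout, work in the chamber coordinates $\mathbf u=(u_1,\dots,u_N)$ of~\eqref{equation_chambervar}; in these coordinates $C_{\bf i}$ is the full-dimensional polyhedral cone with apex the origin whose facets are cut out by the string inequalities $\rho_1(\mathbf u)\ge 0,\dots,\rho_\nu(\mathbf u)\ge 0$, while by~\eqref{equation_change_coordinate} the $\lambda$-cone $C_{\bf i}^\lambda$ is cut out by the $N$ inequalities $\sum_{k\ge j,\ i_k=i_j}u_k\le\lambda_{i_j}$. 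Ordering these last inequalities by $j$ and observing that the $k=j$ summand always occurs, the coefficient matrix of their left-hand sides is unitriangular; hence $C_{\bf i}^\lambda$ is a \emph{simplicial} full-dimensional cone whose apex is the point $p_\lambda$ with $u_{(i)}=\lambda_i$ and all other chamber coordinates zero (the apex recalled just above the statement), and which has exactly $N$ facets, each containing $p_\lambda$. In particular all of the $\lambda$-inequalities are non-redundant in $C_{\bf i}^\lambda$.

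The two facts I will use are: (i) the origin lies in the interior of $C_{\bf i}^\lambda$ (each $\lambda$-inequality reads $0\le\lambda_{i_j}$ there, strictly, by regularity), and of course $0\in C_{\bf i}$; and (ii) the apex $p_\lambda$ lies in the \emph{interior} of $C_{\bf i}$. For (ii): if $\rho_P\ge 0$ is the string inequality of a rigorous path $P$ from $L_{n+1-k}$ to $L_{n+2-k}$, then by~\eqref{equation_coef} (together with the fact that every coefficient of a chamber variable is $0$ or $1$) we have $\rho_P(\mathbf u)=u_{(k)}+\sum u_j$, the remaining sum running over certain \emph{bounded} chambers; evaluating at $p_\lambda$, where every bounded chamber coordinate vanishes, gives $\rho_P(p_\lambda)=\lambda_k>0$, so $p_\lambda$ satisfies every string inequality strictly. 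By Theorem~\ref{theorem_Li}, both $0$ and $p_\lambda$ belong to $\Delta_{\bf i}(\lambda)=C_{\bf i}\cap C_{\bf i}^\lambda$.

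Next I would deduce non-redundancy of each family in $\Delta_{\bf i}(\lambda)$ by localizing at $p_\lambda$ and at $0$ respectively. For the $\lambda$-inequalities: by (ii) choose $\varepsilon>0$ with the ball $B(p_\lambda,\varepsilon)\subseteq C_{\bf i}$, so that $B(p_\lambda,\varepsilon)\cap\Delta_{\bf i}(\lambda)=B(p_\lambda,\varepsilon)\cap C_{\bf i}^\lambda$; since each of the $N$ facets $G$ of $C_{\bf i}^\lambda$ contains $p_\lambda$ and has dimension $N-1$, the set $G\cap B(p_\lambda,\varepsilon)$ is an $(N-1)$-dimensional subset of $\Delta_{\bf i}(\lambda)$ lying on the corresponding $\lambda$-hyperplane, so that $\lambda$-inequality supports a facet of $\Delta_{\bf i}(\lambda)$. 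For the string inequalities: by (i) choose $\delta>0$ with $B(0,\delta)\subseteq C_{\bf i}^\lambda$; given a string inequality $\rho_{j_0}\ge 0$, Proposition~\ref{proposition_stringconenr} says it supports an $(N-1)$-dimensional facet $F$ of the cone $C_{\bf i}$, on whose relative interior all $\rho_j$ with $j\ne j_0$ are strictly positive. Pick a relative interior point $q$ of $F$ and rescale by a small $\tau>0$ so that $\tau q\in B(0,\delta)$; since $F$ is a cone, $\tau q$ is again a relative interior point of $F$, hence $\tau q\in C_{\bf i}\cap C_{\bf i}^\lambda=\Delta_{\bf i}(\lambda)$, and a sufficiently small relatively open neighborhood $D$ of $\tau q$ inside the hyperplane $\{\rho_{j_0}=0\}$ still has $\rho_j>0$ on $D$ for all $j\ne j_0$ and, being contained in $B(0,\delta)\subseteq\operatorname{int}C_{\bf i}^\lambda$, satisfies every $\lambda$-inequality strictly on $D$; thus $D\subseteq\Delta_{\bf i}(\lambda)\cap\{\rho_{j_0}=0\}$ has dimension $N-1$ and $\rho_{j_0}\ge 0$ supports a facet of $\Delta_{\bf i}(\lambda)$.

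It remains to check that the $N+\nu$ facets produced this way are pairwise distinct, which upgrades the ``supports a facet'' conclusions to genuine non-redundancy of the whole system~\eqref{equation_stringcone}--\eqref{equation_lambdacone}: distinct $\lambda$-inequalities have non-proportional normals because the $N$ left-hand forms are linearly independent; distinct string inequalities have distinct primitive inward normals, none lying in the cone spanned by the others by Lemma~\ref{lemma_fnncc}; and no string hyperplane can coincide with a $\lambda$-hyperplane because $0\in\Delta_{\bf i}(\lambda)$ lies on every string hyperplane but, by regularity, on none of the $\lambda$-hyperplanes. I do not anticipate a real obstacle here: the only substantive ingredient is the non-redundancy of the string inequalities inside the cone $C_{\bf i}$, which was the content of Lemma~\ref{lemma_fnncc} and Proposition~\ref{proposition_stringconenr}; the passage from the cone to the polytope is the routine localization above, and the only point requiring care is to make sure the hypothesis $\lambda_i>0$ is actually invoked at each of the two places where an interior-point assertion is made (and, once, that $C_{\bf i}$ and $C_{\bf i}^\lambda$ are full-dimensional, which is standard).
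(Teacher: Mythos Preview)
Your proposal is correct and follows essentially the same route as the paper: you establish that $C_{\bf i}^\lambda$ is simplicial via the unitriangular coefficient matrix, that the origin lies in the interior of $C_{\bf i}^\lambda$, and that the apex $p_\lambda$ lies in the interior of $C_{\bf i}$ (using exactly the observation that each rigorous path encloses a single unbounded chamber, whence $\rho_P(p_\lambda)=\lambda_k>0$). The paper's proof simply asserts that these three facts suffice; you additionally spell out the localization/ball argument and the check that no string hyperplane coincides with a $\lambda$-hyperplane, which is a welcome elaboration but not a genuinely different strategy.
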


\begin{proof}
	It is enough to show that 
		\begin{itemize}
			\item the inequalities in \eqref{equation_lambdacone} are non-redundant in $C_{\bf i}^\lambda$, 
			\item the apex (the origin) of $C_{\bf i}$ is contained in the interior of $C_{\bf i}^\lambda$, and 
			\item the apex ${\bf u}_0$ of $C_{\bf i}^\lambda$ is contained in the interior of $C_{\bf i}$.
		\end{itemize}
	The first statement follows from the observation that the matrix for the system \eqref{equation_lambdacone} is upper triangular such that all diagonal entries are equal to one.
	 In particular $C_{\bf i}^\lambda$ is simplicial. 
	 Moreover, the second statement can be easily obtained from ~\eqref{equation_change_coordinate} 
         because $\lambda$ is a regular dominant weight.

	For the third statement, we choose any rigorous path $P$. Then the region enclosed by $P$ should contain exactly one unbounded region, say~$\scr{U}_i$, 
	so that the normal vector corresponding to $P$ has entry 
	$1$ on the coordinate $u_{(i)}$. Moreover, since the coordinate $u_{(i)}$ of the apex ${\bf u}_0$ has value $\lambda_i$, we see that
	\[
		\rho_j({\bf u}_0) > 0, \quad \quad j=1,\dots,N
	\]
	because $\lambda_i > 0$ for all $i = 1,\dots,n$ and every coefficient of $\rho_j$ is either one or zero. 
	Therefore the apex of the $\lambda$-cone  is in the interior of the string cone~$C_{\bf i}$.
	This finishes the proof.
\end{proof}

We have a corollary which directly follows from Proposition~\ref{proposition_stringpolytopenr}:
\begin{corollary}
	Let $\bf i$ and $\bf i'$ be two different reduced words in $\Sigma_{n+1}$. Suppose that $|\mathcal{GP}(\mathbf{i})| \neq |\mathcal{GP}(\mathbf{i}')|$. Then the corresponding string polytopes $\Delta_{\bf i}(\lambda)$ and $\Delta_{\bf i'}(\lambda)$ are not combinatorially equivalent for any regular dominant weight $\lambda$. Indeed, two string polytopes $\Delta_{\bf i}(\lambda)$ and $\Delta_{\bf i'}(\lambda)$ have different numbers of facets.
\end{corollary}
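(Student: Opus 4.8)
The plan is to extract the exact number of facets of a string polytope from Proposition~\ref{proposition_stringpolytopenr} and then apply the elementary fact that combinatorially equivalent polytopes have the same number of facets.

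First I would fix a regular dominant weight $\lambda$ and recall that, by Proposition~\ref{proposition_stringpolytopenr}, the full list of inequalities \eqref{equation_stringcone} and \eqref{equation_lambdacone} cutting out $\Delta_{\bf i}(\lambda) = C_{\bf i}\cap C_{\bf i}^\lambda$ is non-redundant; since (from the proof of that proposition) $\Delta_{\bf i}(\lambda)$ is full-dimensional, each of these inequalities defines a facet and distinct inequalities define distinct facets. So the number of facets of $\Delta_{\bf i}(\lambda)$ equals the number of \emph{distinct} inequalities in this combined list. The $\lambda$-inequalities \eqref{equation_lambdacone} are indexed by the $N = n(n+1)/2$ nodes of $G(\bf i)$ and are pairwise distinct (e.g.\ by the upper-triangular shape of their coefficient matrix recalled in the proof of Proposition~\ref{proposition_stringpolytopenr}); the string inequalities \eqref{equation_stringcone} are indexed by $\mathcal{GP}(\bf i)$, and non-redundancy forces them to be pairwise distinct as well, so they contribute exactly $|\mathcal{GP}(\bf i)|$ facets. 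Finally, no string inequality equals a $\lambda$-inequality, since $\sum a_i t_i \ge 0$ is homogeneous while a $\lambda$-inequality carries the strictly positive constant term $\lambda_{i_j} > 0$. Hence
\[
\#\{\text{facets of }\Delta_{\bf i}(\lambda)\} = N + |\mathcal{GP}(\mathbf i)|,
\]
and by the same reasoning $\#\{\text{facets of }\Delta_{\bf i'}(\lambda)\} = N + |\mathcal{GP}(\mathbf i')|$.

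Since $N$ depends only on $n$ and, by hypothesis, $|\mathcal{GP}(\mathbf i)| \ne |\mathcal{GP}(\mathbf i')|$, the two polytopes have different numbers of facets; in particular their face lattices are not isomorphic, so $\Delta_{\bf i}(\lambda)$ and $\Delta_{\bf i'}(\lambda)$ are not combinatorially equivalent. This settles both assertions of the corollary. There is no genuine obstacle here beyond Proposition~\ref{proposition_stringpolytopenr}: the only point needing a moment's care is the bookkeeping showing that the non-redundant description has exactly $N + |\mathcal{GP}(\mathbf i)|$ members (no coincidence between the two families, no duplicated rigorous-path inequalities), and the homogeneity remark disposes of that cleanly.
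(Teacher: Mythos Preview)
Your argument is correct and matches the paper's intended route: the paper states the corollary ``directly follows from Proposition~\ref{proposition_stringpolytopenr}'' without further detail, and your facet count $N + |\mathcal{GP}(\mathbf i)|$ is exactly what that proposition yields. Your extra bookkeeping (distinctness of the two families via homogeneity) is harmless but unnecessary, since non-redundancy of the combined system already forces all listed inequalities to be pairwise distinct.
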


\begin{example}
	Let $\mathbf i =(1,2,1,3,2,1)$ and $\mathbf i' = (2,1,3,2,3,1)$. We have seen in~Example~\ref{example_string_cone_GC} and Remark~\ref{rmk_number_of_string_cone_iequalities} that $
	|\mathcal{GP}(\mathbf i)| = 6$ and $ |\mathcal{GP}(\mathbf i')| = 7$.
	Hence the string polytopes $\Delta_{\bf i}(\lambda)$ and $\Delta_{\bf i'}(\lambda)$ have different numbers of facets. 
\end{example}

\section{Simplicial string cones}
\label{secSimplicialStringCones}

	Let ${\bf i} \in \Sigma_{n+1}$ be a reduced word  of $w_0 \in \mathfrak{S}_{n+1}$. 
	In this section, we study how the number of facets of the string cone for an extension (respectively, a contraction) of ${\bf i}$ increases (respectively, decreases) in Proposition~\ref{proposition_strictly_increasing}.
	We denote the number of facets of the string cone $C_{\bf i}$ by $\lVert {\bf i} \rVert$, so that we have $\lVert {\bf i} \rVert = |\mathcal{GP}(\mathbf i)|$ by Proposition~\ref{proposition_stringpolytopenr}.
	Finally, we provide a necessary and sufficient condition on $\mathbf{i}$ such that $C_{\mathbf{i}}$ is simplicial, see Theorem \ref{theorem_simplicial}.
	
	We begin by proving the lemma. 		
	\begin{lemma}\label{lemma_oldpath}
		For any reduced word  ${\bf i} \in \Sigma_{n}$, let $\mathcal{GP}({\bf i})$ be the set of all rigorous paths on $G({\bf i})$. Then there is a canonical inclusion
		\[
				\Psi_\bullet ({\bf i}, s) \colon \mathcal{GP}({\bf i}) \hookrightarrow \mathcal{GP}(E_\bullet (s)({\bf i}))
		\]
		where $E_\bullet(s)$ is the $\bullet$-extension in Definition~\ref{definition_contraction_extension} for each $s = 0, 1, \dots, \frac{n(n-1)}{2}$ and $\bullet = D$ or $A$. Moreover, the image of $\Psi_\bullet ({\bf i}, s)$ is given by 
		\[
			\mathrm{Im} ~\Psi_\bullet ({\bf i}, s) = \{ P \in \mathcal{GP}(E_\bullet (s)({\bf i})) \mid \text{$\node(P)$ does not contain a node lying on $\ell_\bullet$} \}
		\]
		where $\ell_D := \ell_{n+1}$ and $\ell_A := \ell_1$, respectively.
	\end{lemma}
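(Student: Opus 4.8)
The plan is to realize $E_\bullet(s)({\bf i})$ geometrically as the wiring diagram $G({\bf i})$ with one extra wire inserted, and to check that rigorous paths on $G({\bf i})$ are precisely the rigorous paths on the extended diagram that avoid the new wire. I will focus on the case $\bullet = D$, so that $E_D(s)({\bf i}) \in \Sigma_{n+2}$ is obtained from ${\bf i}$ by inserting, after the position $N-s$, the block $D_{n+1} = (n+1, n, \dots, 2, 1)$ and shifting ${\bf i}^+(s)$ up by one; the case $\bullet = A$ is symmetric, exchanging the roles of $\ell_{n+1}$ and $\ell_1$. As explained in Remark~\ref{remark_well_define}, the diagram $G(E_D(s)({\bf i}))$ is obtained from $G({\bf i})$ by adjoining a new wire $\ell_{n+2}$ that enters at the top to the right of all other wires and descends, crossing $\ell_{n+1}, \ell_n, \dots, \ell_1$ in that order, the crossing pattern being dictated exactly by the inserted block $D_{n+1}$ together with the shift ${\bf i}^+(s) + 1$. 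Deleting $\ell_{n+2}$ from $G(E_D(s)({\bf i}))$ recovers $G({\bf i})$ on the nose (the $N$ nodes of $G({\bf i})$ sit inside $G(E_D(s)({\bf i}))$ as exactly those nodes not lying on $\ell_{n+2}$), so the definition of $\Psi_D({\bf i},s)$ is forced: a rigorous path $P$ on $G({\bf i})$, viewed as a sequence of moves along wires and through nodes, is carried verbatim into $G(E_D(s)({\bf i}))$, since none of its nodes lie on $\ell_{n+2}$ and none of its wire-segments are affected by the insertion.

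The first thing to verify is that $\Psi_D({\bf i},s)$ really lands in $\mathcal{GP}(E_D(s)({\bf i}))$, i.e.\ that the image path is still rigorous. Here one must check the four conditions of Definition~\ref{definition_rigorous_path}. The endpoints are unchanged: for $P$ a rigorous path of type $k$ on $G({\bf i})$ (so from $L_k$ to $L_{k+1}$ with $k \le n$), the corresponding lower ends persist in $G(E_D(s)({\bf i}))$ for the same value of $k \le n < n+1$, and the orientation $G(E_D(s)({\bf i}), k)$ restricts on the old wires to $G({\bf i}, k)$. Passing through each node at most once is immediate. The only real content is that no forbidden fragment is created: a forbidden fragment (Remark~\ref{remark_avoiding}) requires the path to use a crossing of two wires $\ell_i, \ell_j$ with both oriented downward and $i>j$, or both upward and $i<j$; since $P$ uses only old wires and old crossings, and the relative orientations of old wires in $G(E_D(s)({\bf i}),k)$ agree with those in $G({\bf i},k)$ for $k \le n$, a fragment forbidden in the extension would already have been forbidden in $G({\bf i})$. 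So $\Psi_D({\bf i},s)$ is well-defined and visibly injective.

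The substantive direction is the description of the image: a rigorous path $Q$ on $G(E_D(s)({\bf i}))$ whose node set avoids $\ell_{n+2}$ comes from a rigorous path on $G({\bf i})$. First, such a $Q$ must have type $k$ with $k \le n$: in $G(E_D(s)({\bf i}), n+1)$ the new wire $\ell_{n+2}$ is the unique downward wire, and any rigorous path of type $n+1$ runs from $L_{n+1}$ to $L_{n+2}$ and is forced onto $\ell_{n+2}$ (one argues that it must eventually reach $L_{n+2}$, whose only incident wire is $\ell_{n+2}$, so it touches a node on $\ell_{n+2}$), contradicting the avoidance hypothesis. Once $k \le n$, every wire-segment and every node of $Q$ lies among $\ell_1, \dots, \ell_{n+1}$ and the old nodes, so deleting $\ell_{n+2}$ carries $Q$ to an oriented path $\bar Q$ on $G({\bf i}, k)$ with the same endpoints $L_k \to L_{k+1}$. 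It remains to see $\bar Q$ is rigorous on $G({\bf i},k)$: the node-disjointness is inherited, and a forbidden fragment of $\bar Q$ on $G({\bf i},k)$ would be a forbidden fragment of $Q$ on $G(E_D(s)({\bf i}),k)$ since the crossing and the two orientations involved are literally the same. Hence $\bar Q$ is rigorous and $\Psi_D({\bf i},s)(\bar Q) = Q$. Finally, the string inequality coefficients in~\eqref{equation_stringcone} depend only on which node the path crosses at and in which direction, so they match under $\Psi_D$ as well, though this is not needed for the statement. The main obstacle I anticipate is the rigorous bookkeeping that the forbidden-fragment condition and the orientation data transport cleanly across the insertion; this is a matter of carefully matching the combinatorial type of the two diagrams near $\ell_{n+2}$, and the cleanest route is to argue entirely in pictures, as the paper does for Proposition~\ref{proposition_D}, rather than through indices. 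One should also double-check the stated range $0 \le s \le \frac{n(n-1)}{2}$ of $s$: this is the number $\binom{n}{2}$ of nodes in a wiring diagram on $n$ wires, i.e.\ $s$ ranges over $[0, N']$ where $N' = |{\bf i}|$ for ${\bf i} \in \Sigma_n$, which is exactly the domain over which $E_\bullet(s)$ was defined.
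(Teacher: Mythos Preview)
Your proposal is correct and follows essentially the same approach as the paper: define $\Psi_D$ by carrying a rigorous path verbatim into the extended diagram (the paper does this via the explicit node-index shift $\hat{j_i} = j_i$ or $j_i + n$ depending on whether $j_i \le s$ or $j_i \ge s+1$, while you phrase it geometrically as ``delete/insert the new wire''), then verify the four rigorousness conditions and exhibit the inverse on paths whose node set avoids the new wire. The one place where the paper is more explicit than your sketch is precisely the obstacle you flag: the image path $\Psi_D(P)$ does pass \emph{straight through} the new crossings of old wires with $\ell_{n+1}$, and one must check these local patterns are not forbidden; the paper handles this by drawing the two possible local pictures at such a crossing and observing they are not among the fragments of Figure~\ref{figure_avoiding} (note also your indices are shifted by one relative to the lemma's convention ${\bf i} \in \Sigma_n$, so the new wire is $\ell_{n+1}$, not $\ell_{n+2}$).
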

	
	\begin{proof}
		We only provide the proof for the case $\bullet = D$ since the case $\bullet = A$ can be similarly dealt with.  
		For a path $P \in \mathcal{GP}({\bf i})$ with the node expression
		\[
			P = (L_k \rightarrow t_{j_1} \rightarrow \cdots \rightarrow t_{j_r} \rightarrow L_{k+1}), \quad (k \in [n-1])
		\]					
		and a fixed $s \in \{0,1,\dots, \frac{n(n-1)}{2}\}$, we define 
		\begin{equation}\label{equation_embedding}
			\begin{array}{ccccl}\vs{0.1cm}
				\Psi_D ({\bf i}, s) & \colon & \mathcal{GP}({\bf i}) & \rightarrow & \mathcal{GP}(E_D (s)({\bf i})) \\ \vs{0.1cm}
								&  & P                & \mapsto & \Psi_D ({\bf i}, s)(P) \\ \vs{0.1cm}
								&  &			&               & := (L_k \rightarrow t_{\hat{j_1}} \rightarrow \cdots \rightarrow t_{\hat{j_r}} \rightarrow L_{k+1}), 
			\end{array}
			\hs{0.1cm}
			\hat{j_i} = \begin{cases}
				{j_i} & \text{if $j_i \leq s$} \\
				{j_i}+n & \text{if $j_i \geq s + 1$}
			\end{cases}
		\end{equation}
		(See the red paths in Figure \ref{figure_new}.) 
		
		We claim that the map~\eqref{equation_embedding} is well-defined. Namely, the path $\Psi_D ({\bf i}, s)(P)$ is a rigorous path, which follows from the following observation$\colon$ 			
		\begin{itemize}
					\item It respects the orientation of $G(E_D (s)({\bf i}), k)$ since the orientation of the wire $\ell_j$ in $G({\bf i}, k)$ coincides with 
						that of the wire $\ell_j$ in $G(E_D (s)({\bf i}), k)$ for $j = 1, \cdots, n$. 
					\item It does \emph{not} contain any forbidden fragments of $ \Psi_D ({\bf i}, s)(P)$ described in Figure \ref{figure_avoiding} since new patterns 
					appeared on $\ell_{n+1}$ are 
						\begin{figure}[H]
							\scalebox{1}{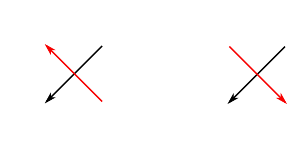}
						\end{figure}					
				\end{itemize}
			 Moreover, the image of $\mathcal{GP}({\bf i})$ under $\Psi_D ({\bf i}, s)$ is explicitly given by
				\[
					O := \{ P \in \mathcal{GP}(E_D (s)({\bf i})) \mid \text{$\node(P)$ does not contain any}~ \underbrace{\text{node lying on $\ell_{n+1}$.}}_{=~t_{s+1}, \dots, t_{s+n}} \}.
				\]
				(Indeed, we can find the inverse of $\Psi_D ({\bf i}, s)(P)$ directly from \eqref{equation_embedding} such that
				\[
					(L_k \rightarrow t_{\hat{j_1}} \rightarrow \cdots \rightarrow t_{\hat{j_r}} \rightarrow L_{k+1}) \mapsto (L_k \rightarrow t_{j_1} \rightarrow \cdots \rightarrow t_{j_r} \rightarrow L_{k+1})
				\]
				from $O$ to $\mathcal{GP}({\bf i})$ as any of $\hat{j_i}$'s is not in $\{s+1, \dots, s+n\}$.)
		This finishes the proof. 
	\end{proof}
	
	Using the relation \eqref{equation_commutation} and Lemma \ref{lemma_oldpath}, one can generalize 
	Lemma \ref{lemma_oldpath} as follows. 
	
	\begin{corollary}\label{corollary_oldpath}
		Let ${\bf i} \in \Sigma_{n+1}$. Then there is canonical inclusion 
		\[
			\widetilde{\Psi}_\bullet({\bf i}) \colon \mathcal{GP}(C_\bullet({\bf i})) \hookrightarrow \mathcal{GP}({\bf i}), \quad \quad \bullet = D ~\text{or} ~A.
		\]
		Also, the image of $\widetilde{\Psi}_\bullet ({\bf i})$ is given by 
		\[
			\mathrm{Im} ~\widetilde{\Psi}_\bullet ({\bf i}) = \{ P \in GP({\bf i}) ~|~ \text{$\node(P)$ does not contain a node lying on $\ell_\bullet$} \}
		\]
		where $\ell_D := \ell_{n+1}$ and $\ell_A := \ell_1$, respectively.
	\end{corollary}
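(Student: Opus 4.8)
The plan is to obtain Corollary~\ref{corollary_oldpath} by applying Lemma~\ref{lemma_oldpath} ``one level higher'' to the contracted word $C_\bullet({\bf i})$ and then transporting the resulting inclusion back to $\mathcal{GP}({\bf i})$ along the sequence of $2$-moves furnished by the commutation relation~\eqref{equation_commutation}. Throughout I fix $\bullet \in \{D, A\}$ and write $\ell_\bullet$ for $\ell_{n+1}$ or $\ell_1$ as in the statement.

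First I would set $s := \mathrm{ind}_\bullet({\bf i})$, so that $C_\bullet({\bf i}) \in \Sigma_n$ and $0 \le s \le \frac{n(n-1)}{2}$: by Proposition~\ref{proposition_well_defined} the integer $s$ counts the nodes of $G({\bf i})$ lying below $\ell_\bullet$, and these are among the $N - n = \frac{n(n-1)}{2}$ nodes that do not lie on $\ell_\bullet$. Lemma~\ref{lemma_oldpath}, applied to the word $C_\bullet({\bf i})$ and the parameter $s$, then yields a canonical inclusion
\[
\Psi_\bullet(C_\bullet({\bf i}), s)\colon \mathcal{GP}(C_\bullet({\bf i})) \hookrightarrow \mathcal{GP}\bigl(E_\bullet(s)(C_\bullet({\bf i}))\bigr),
\]
whose image is precisely the set of rigorous paths $P$ on $G\bigl(E_\bullet(s)(C_\bullet({\bf i}))\bigr)$ such that $\node(P)$ contains no node on the wire $\ell_\bullet$. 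Note that, by definition, $E_\bullet(s)(C_\bullet({\bf i})) = E_\bullet \circ (C_\bullet, \mathrm{ind}_\bullet)({\bf i})$.

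Next I would invoke~\eqref{equation_commutation}, which asserts $E_\bullet \circ (C_\bullet, \mathrm{ind}_\bullet)({\bf i}) \sim {\bf i}$, i.e.\ these two reduced words are connected by a sequence of $2$-moves. As recorded in the proof of Proposition~\ref{proposition_well_defined}, a $2$-move leaves the combinatorial type of the wiring diagram unchanged: it merely swaps the labels of two mutually ``incomparable'' nodes and preserves the pair of wires crossing at each node, as well as the labeling of the wires (whose endpoints $U_k, L_k$ are untouched by interior $2$-moves) and the orientations of $G(\cdot, k)$. Hence a $\sim$-equivalence induces a bijection $\beta\colon \mathcal{GP}\bigl(E_\bullet(s)(C_\bullet({\bf i}))\bigr) \to \mathcal{GP}({\bf i})$, which is canonical since a rigorous path, its node set, its peaks, and the wire $\ell_\bullet$ are all notions intrinsic to the wiring diagram up to isotopy; moreover $\beta$ identifies, on the two sides, the nodes lying on $\ell_\bullet$ (both being the crossings involving the last wire $\ell_{n+1}$, resp.\ the first wire $\ell_1$). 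I would then define $\widetilde{\Psi}_\bullet({\bf i}) := \beta \circ \Psi_\bullet(C_\bullet({\bf i}), s)$; it is an injection, being a composite of an injection with a bijection, and pushing the image description above through $\beta$ gives exactly
\[
\mathrm{Im}\,\widetilde{\Psi}_\bullet({\bf i}) = \{\, P \in \mathcal{GP}({\bf i}) \mid \node(P) \text{ contains no node on } \ell_\bullet \,\}.
\]

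The only point that needs genuine care --- rather than a real obstacle --- is the well-definedness of $\beta$, i.e.\ its independence of the chosen sequence of $2$-moves; this is precisely the assertion that the commutation class of a reduced word is the data of its wiring diagram up to isotopy, which is already used implicitly in Proposition~\ref{proposition_well_defined}. Everything else is bookkeeping with the index shifts built into the definitions of $C_\bullet$ and $E_\bullet$.
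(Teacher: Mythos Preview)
Your proposal is correct and follows essentially the same route as the paper: apply Lemma~\ref{lemma_oldpath} to $C_\bullet({\bf i})$ with $s = \mathrm{ind}_\bullet({\bf i})$, then transport along the natural identification $\phi({\bf i},{\bf i}') \colon \mathcal{GP}({\bf i}) \to \mathcal{GP}({\bf i}')$ for ${\bf i} \sim {\bf i}'$ furnished by~\eqref{equation_commutation}. Your $\beta$ is exactly the paper's $\phi$, and you have filled in more of the bookkeeping than the paper bothers to record.
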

	
	\begin{proof}
		The proof is straightforward since there is a natural identification
		\[
			\phi({\bf i}, {\bf i}') \colon \mathcal{GP}({\bf i}) \rightarrow \mathcal{GP}({\bf i}') 
		\] for any ${\bf i}, {\bf i}' \in \Sigma_{n+1}$ such that ${\bf i} \sim {\bf i}'$.
	\end{proof}
	
	\begin{remark}
		Note that 
		\[
			\widetilde{\Psi}_\bullet({\bf i}) = \Psi_\bullet(C_\bullet({\bf i}), s)
		\]
		when ${\bf i} = E_\bullet(s)(C_\bullet({\bf i}))$ (so that $\mathrm{ind}_\bullet ({\bf i}) = s$).	
	\end{remark}

	For $\bullet = D$ or $A$, we say that a rigorous path $P \in \mathcal{GP}({\bf i})$ is {\em $\bullet$-new} if 
	\[
		P \not \in \mathrm{Im}~ \widetilde{\Psi}_\bullet ({\bf i}) \subset \mathcal{GP}({\bf i}).
	\]
	See Figure \ref{figure_new}; blue paths are $D$-new but red ones are not. 
	
	\begin{figure}[h]
		\scalebox{1}{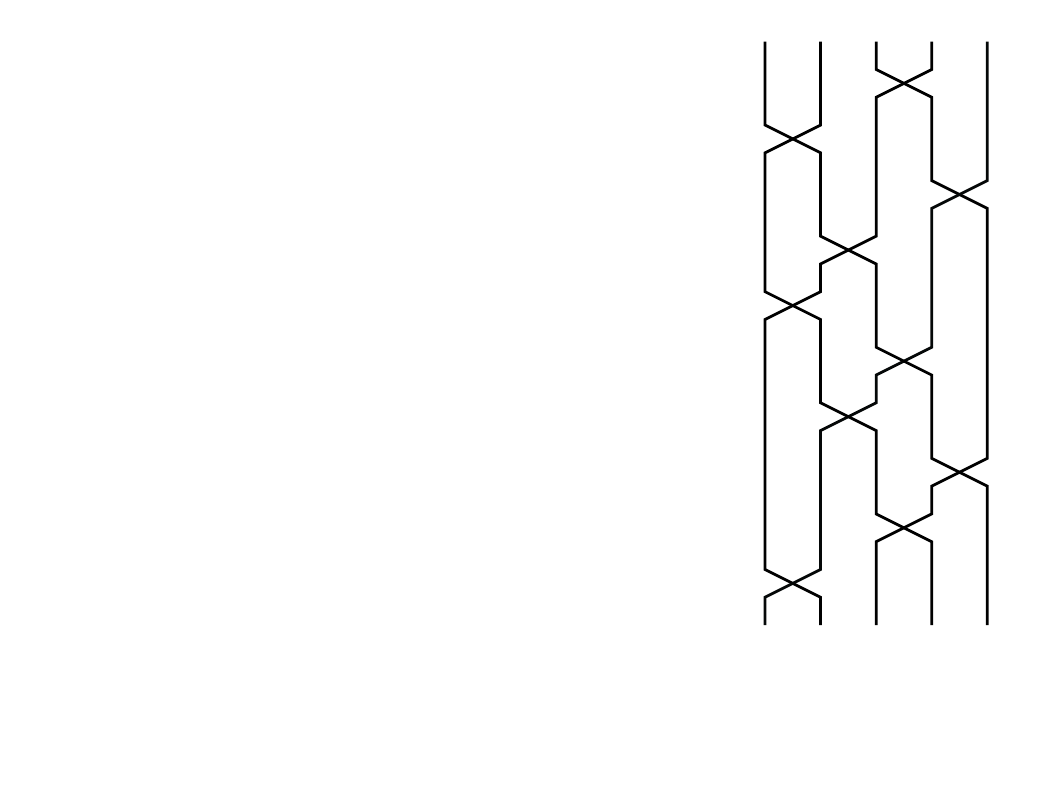}
		\caption{\label{figure_new} Examples of new rigorous paths for ${\bf i}' = (3,1,4,2,1,3,2,4,3,1)$.}
	\end{figure}	
	
	By definition, for a given ${\bf i} \in \Sigma_{n+1}$, 
	a rigorous path in $G({\bf i})$ which has a peak on $\ell_{n+1}$ (on $\ell_1$, respectively) is not $D$-new (not $A$-new, respectively).
	The following series of corollaries are straightforward by Corollary \ref{corollary_oldpath}.
	
	\begin{corollary}\label{corollary_new_iff}
		Given ${\bf i} \in \Sigma_{n+1}$, a rigorous path $P$ in $G({\bf i})$ is $D$-new if and only if at least one node in $\node(P)$ is on $\ell_{n+1}$. Similarly, 
		a rigorous path $P$ in $G({\bf i})$ is $A$-new if and only if at least one node in $\node(P)$ is on $\ell_1$.  
	\end{corollary}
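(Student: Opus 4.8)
The plan is to derive Corollary~\ref{corollary_new_iff} directly by unwinding the definition of a $\bullet$-new path against the explicit description of $\mathrm{Im}~\widetilde{\Psi}_\bullet({\bf i})$ furnished by Corollary~\ref{corollary_oldpath}. By definition, a rigorous path $P \in \mathcal{GP}({\bf i})$ is $D$-new precisely when $P \notin \mathrm{Im}~\widetilde{\Psi}_D({\bf i})$. Corollary~\ref{corollary_oldpath} identifies $\mathrm{Im}~\widetilde{\Psi}_D({\bf i})$ with the set of those $P \in \mathcal{GP}({\bf i})$ whose node set $\node(P)$ contains no node lying on $\ell_D = \ell_{n+1}$. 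Taking the complement inside $\mathcal{GP}({\bf i})$ yields immediately that $P$ is $D$-new if and only if $\node(P)$ contains at least one node on $\ell_{n+1}$; the argument for $A$-new is verbatim the same with $\ell_A = \ell_1$ in place of $\ell_{n+1}$.

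Thus no substantial new work is needed; the statement is essentially a reformulation of Corollary~\ref{corollary_oldpath}. The only point that deserves a line of justification is that the condition ``$\node(P)$ meets $\ell_\bullet$'' is genuinely intrinsic to $P \in \mathcal{GP}({\bf i})$, independently of the auxiliary choices entering the construction of $\widetilde{\Psi}_\bullet({\bf i})$ (namely the extension $E_\bullet$, the identification $\phi({\bf i},{\bf i}')$ used in the proof of Corollary~\ref{corollary_oldpath}, and the relation~\eqref{equation_commutation}). This holds because each of these identifications acts on rigorous paths by a relabeling of nodes that preserves node-expressions and, as recorded in the proof of Proposition~\ref{proposition_well_defined}, leaves unchanged the relative position of every node with respect to every wire; in particular it preserves incidence of a node with $\ell_{n+1}$ or with $\ell_1$. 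Hence both ``being $\bullet$-new'' and ``$\node(P)$ meets $\ell_\bullet$'' are well defined on $\mathcal{GP}({\bf i})$, and the set equality of Corollary~\ref{corollary_oldpath} transports one to the other.

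The only ``obstacle'' — really a bookkeeping check rather than a genuine difficulty — is therefore exactly this compatibility of the wire--node incidence data with the chain of canonical identifications; once that is noted, the biconditional is forced. Concretely I would write the proof as one short paragraph: recall the definition of $\bullet$-new, quote the image description in Corollary~\ref{corollary_oldpath}, take complements within $\mathcal{GP}({\bf i})$ to get the $D$-statement, and dispose of the $A$-statement by the evident symmetry $\ell_{n+1} \leftrightarrow \ell_1$.
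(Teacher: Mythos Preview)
Your proposal is correct and matches the paper's approach: the paper records this corollary as ``straightforward by Corollary~\ref{corollary_oldpath}'', and your argument---negating the image description from Corollary~\ref{corollary_oldpath} against the definition of $\bullet$-new---is exactly that one-line deduction. The extra paragraph on well-definedness is harmless but not needed, since the image description in Corollary~\ref{corollary_oldpath} is already stated intrinsically in terms of $\mathcal{GP}({\bf i})$ and $\ell_\bullet$.
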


	\begin{corollary}\label{corollary_new_peak}
		Given ${\bf i} \in \Sigma_{n+1}$, a rigorous path $P$ in $G({\bf i})$ is $D$-new \textup{(}$A$-new, respectively\textup{)} if $\node(P)$ contains a peak lying 
		on $\ell_{n+1}$ \textup{(}on $\ell_1$, respectively\textup{)}. 
	\end{corollary}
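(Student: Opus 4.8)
The plan is to obtain Corollary~\ref{corollary_new_peak} as an essentially immediate consequence of Corollary~\ref{corollary_new_iff}. The one point that deserves a sentence of justification is that every peak of a rigorous path $P$ is an element of $\node(P)$; granting this, a peak lying on $\ell_{n+1}$ (respectively $\ell_1$) is in particular a node in $\node(P)$ lying on $\ell_{n+1}$ (respectively $\ell_1$), so Corollary~\ref{corollary_new_iff} applies verbatim and yields that $P$ is $D$-new (respectively $A$-new).

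To justify that a peak lies in $\node(P)$ I would argue as follows. In the wiring diagram $G({\bf i})$ the nodes $t_1,\dots,t_N$ are stacked by height, and on each horizontal band between two consecutive node-heights every wire is a straight segment of strictly monotone height; hence, as long as $P$ travels along a single wire its height changes strictly monotonically, increasing on an upward-oriented wire and decreasing on a downward-oriented one. Consequently a local maximum of $P$ with respect to height can occur only where $P$ passes from an upward-oriented wire to a downward-oriented wire, and by the very definition of the node-expression in~\eqref{equation_node_expression} such a transition takes place at a node belonging to $\node(P)$. The $A$-new statement follows by the same reasoning after exchanging the roles of $\ell_{n+1}$ and $\ell_1$.

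Since the only substantive input is Corollary~\ref{corollary_new_iff}, there is no genuine obstacle here: the argument is purely a matter of unwinding the definition of a peak of a rigorous path and then invoking the already-established characterization of $\bullet$-new paths in terms of nodes on $\ell_\bullet$.
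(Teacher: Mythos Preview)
Your argument is correct and matches the paper's treatment: the paper does not give a separate proof of this corollary but simply remarks that it (together with Corollary~\ref{corollary_new_iff}) is straightforward from Corollary~\ref{corollary_oldpath}. Your derivation via Corollary~\ref{corollary_new_iff}, together with the observation that a local maximum of $P$ must occur at a wire-switching node and hence lies in $\node(P)$, is exactly the intended reasoning.
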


	The following propositions tell us that we may describe at least $n$ distinct $\bullet$-new rigorous paths in $G({\bf i})$ explicitly for any ${\bf i} \in \Sigma_{n+1}$
	and $\bullet = D$ or $A$.
	
	\begin{proposition}\label{proposition_canonical_D}
		Let ${\bf i} \in \Sigma_{n+1}$ and $t_{j_k}$ be the node at which $\ell_k$ and $\ell_{n+1}$ intersect. 
		For each $k \in [n]$, there exists a rigorous path $P_D({\bf i}, k) \in \mathcal{GP}({\bf i})$ such that 
		\begin{itemize}
			\item it has a unique peak $t_{j_k}$, 
			\item it travels from $\ell_k$ to $\ell_{n+1}$ at $t_{j_k}$, 			
			\item it is below $\ell_{n+1}$, 
			\item with respect to the wire-expression of $P_D({\bf i}, k)\colon$ 
			\[
				\ell_{r_p} \rightarrow \cdots \rightarrow \ell_{r_1} \rightarrow \ell_k \rightarrow \ell_{n+1} \rightarrow \ell_{s_q} \rightarrow \cdots \rightarrow \ell_{s_1} 
				(:= \ell_{i_p+1}),
			\]
			the sequences $r_1, \dots, r_p$ and $s_1, \dots, s_q$ are increasing and decreasing, respectively.
		\end{itemize}		
	\end{proposition}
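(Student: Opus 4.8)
The plan is to produce $P_D(\mathbf{i},k)$ by an explicit greedy construction, starting from the prescribed peak $t_{j_k}=\ell_k\cap\ell_{n+1}$ and growing the path outward in the two directions that the axioms of Definition~\ref{definition_rigorous_path} force, and then to verify the four bulleted properties one at a time. Concretely, I declare that the path enters $t_{j_k}$ along $\ell_k$ and leaves it along $\ell_{n+1}$; in the orientation $G(\mathbf{i},k)$ the wire $\ell_k$ is oriented upward and $\ell_{n+1}$ downward, so this turn makes $t_{j_k}$ a local maximum and accounts for the edge "$\ell_k\to\ell_{n+1}$ at $t_{j_k}$". For the \emph{left arm} I take the straight descent along $\ell_k$ from $t_{j_k}$ down to $L_k$ (so the path is a rigorous path of index $k$). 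For the \emph{right arm} I start at $t_{j_k}$ on $\ell_{n+1}$, travel downward, and stay on the current wire until continuing straight would produce a forbidden fragment; by Remark~\ref{remark_avoiding} this happens exactly at the first crossing of the current wire with a downward-oriented wire of strictly smaller index, onto which we then turn (still moving downward).

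Next I would check that this is a \emph{rigorous} path. It respects the orientation of $G(\mathbf{i},k)$ by construction. It contains no forbidden fragment: by Remark~\ref{remark_avoiding}, a straight crossing is forbidden only when the path's wire has larger index and both wires point downward, or smaller index and both point upward; hence going straight up $\ell_k$ — the \emph{maximal} upward-oriented wire in $G(\mathbf{i},k)$ — is always legal, and on the right arm we turn precisely at the crossings where a straight passage would be illegal. Finally the zigzag terminates at $L_{k+1}$: each forced turn strictly decreases the wire index while keeping it in the downward range $\{k+1,\dots,n+1\}$, and since each pair of wires crosses exactly once, the arm cannot "skip over" $\ell_{k+1}$ — the first time it meets $\ell_{k+1}$ it is on a wire of strictly larger index and is therefore forced to turn onto $\ell_{k+1}$, after which no smaller downward wire remains and it descends straight to $L_{k+1}$. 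This yields the decreasing sequence $s_q>\dots>s_1=k+1$ (so $i_p=k$, and the $r$-part of the wire-expression is empty for $\bullet=D$).

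The remaining properties follow quickly. \emph{Unique peak:} since each wire of $G(\mathbf{i})$ is monotone in height, the left arm is strictly increasing in height up to $t_{j_k}$ and the right arm strictly decreasing from it, so $t_{j_k}$ is the only local maximum; in particular each node is visited at most once along each arm. \emph{Below $\ell_{n+1}$:} any wire $\ell_m$ used by the path meets $\ell_{n+1}$ exactly once, so the sub-arc of $\ell_m$ lying below that crossing lies entirely in the one of the two regions cut out by $\ell_{n+1}$ that contains the lower ends $L_1,\dots,L_n$; the left arm is such a sub-arc of $\ell_k$, and every wire entered by the right-arm zigzag is entered below its crossing with $\ell_{n+1}$, so the whole path stays below $\ell_{n+1}$. \emph{Wire-expression:} the above shows it is $\ell_k\to\ell_{n+1}\to\ell_{s_q}\to\cdots\to\ell_{s_1}=\ell_{k+1}$ with $s_q>\cdots>s_1$, which is the claimed form.

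The step I expect to be the real obstacle is showing that the path is \emph{simple}, i.e.\ that the two arms are internally disjoint: one must rule out that the forced right-arm zigzag re-enters a node already used by the straight descent along $\ell_k$, namely a crossing $\ell_k\cap\ell_{m_i}$ lying simultaneously on the $\ell_k$-arc below $t_{j_k}$ and on the traversed portion of $\ell_{m_i}$. The clean way to settle this is a "no re-crossing" argument: the two arms together bound a region of $G(\mathbf{i},k)$, and the turn-only-when-forced rule makes the right arm monotone not only in height but also in its position relative to $\ell_k$, so it never recrosses the left arm. A more robust alternative is to build $P_D(\mathbf{i},k)$ by induction on $n$: write $\mathbf{i}\sim E_D(s)\big(C_D(\mathbf{i})\big)$ with $s=\mathrm{ind}_D(\mathbf{i})$ (Proposition~\ref{proposition_D}, Definition~\ref{definition_contraction_extension}), note that for the top index $k=n$ the path is the single new detour onto $\ell_{n+1}$, and for $k<n$ take the image of the analogous path for $C_D(\mathbf{i})$ under the canonical inclusion $\Psi_D$ of Lemma~\ref{lemma_oldpath} (cf.\ Corollary~\ref{corollary_oldpath}) and re-route it through $t_{j_k}$ using the explicit local shape of $E_D$; the peak, the "below $\ell_{n+1}$" property and the monotone wire-expression are then inherited from the inductive hypothesis, and simplicity is automatic because $\Psi_D$ sends rigorous paths to rigorous paths.
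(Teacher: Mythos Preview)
Your construction has a genuine gap: you assume the path $P_D(\mathbf{i},k)$ lives in $G(\mathbf{i},k)$ with a trivial left arm (``the $r$-part of the wire-expression is empty''), but the proposition explicitly allows a nontrivial prefix $\ell_{r_p}\to\cdots\to\ell_{r_1}\to\ell_k$, and the paper's own Example~\ref{example_canonical_paths} shows this prefix is unavoidable. Take $\mathbf{i}=(4,3,2,1,4,2,3,2,4,3)\in\Sigma_5$ and $k=1$. The node $t_{j_1}=\ell_1\cap\ell_5$ is the \emph{lowest} node on $\ell_5$, so below it $\ell_5$ meets no other wire and runs straight to $L_5$. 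Your right arm therefore terminates at $L_5$, not $L_2$, and the resulting path $L_1\to t_{j_1}\to L_5$ is not a rigorous path in $G(\mathbf{i},1)$ at all. (Nor is it one in $G(\mathbf{i},4)$: there $\ell_1,\ell_2,\ell_3,\ell_4$ are all upward, and the straight ascent along $\ell_1$ now hits forbidden fragments at $\ell_1\cap\ell_3$, $\ell_1\cap\ell_4$, $\ell_1\cap\ell_2$.) The flaw in your termination argument is the sentence ``the first time it meets $\ell_{k+1}$\dots'': the zigzag need not meet $\ell_{k+1}$ at all.

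The paper's proof resolves this by running the greedy rule on the \emph{left} arm as well: starting from the segment of $\ell_k$ below $t_{j_k}$, it repeatedly prepends the smallest-index wire $\ell_{r_1}<\ell_{r_2}<\cdots$ (with $r_1>k$) that crosses the current arm below $\ell_{n+1}$, until no such wire exists. This produces $r_p$, and the path is then declared to live in $G(\mathbf{i},r_p)$; the right arm is built analogously starting from $\ell_{r_p+1}$, so it ends at $L_{r_p+1}$ by design rather than by a separate argument. In the example this gives $P_D(\mathbf{i},1)=(\ell_4\to\ell_2\to\ell_1\to\ell_5)$, a path from $L_4$ to $L_5$. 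Your inductive fallback via $\Psi_D$ also does not work as stated: the desired path has a peak on $\ell_{n+1}$, hence (Corollary~\ref{corollary_oldpath}) is \emph{not} in the image of $\widetilde{\Psi}_D$, and the unspecified ``re-routing through $t_{j_k}$'' is precisely the content you need to supply.
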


	\begin{proof}
		Fix $k \in [n]$. We first construct the first half of $P_D({\bf i}, k)$, which is from some $L_{r_p}$ to $t_{j_k}$. Let $P_0 := (\ell_k \rightarrow \ell_{n+1})$
		be an oriented path from $L_k$ to $t_{j_k}$, a portion of $\ell_k$, where the orientation respects the orientation on $G({\bf i}, k)$.
		
		Let $r_1 > k$ be the smallest index such that 
		$\ell_k$ crosses $\ell_{r_1}$ before meeting $\ell_{n+1}$. Then we obtain an oriented path 
		\[
			P_1 := (\ell_{r_1} \rightarrow \ell_k \rightarrow \ell_{n+1} ), 
		\]
		where the orientation respects the orientation of $G({\bf i}, r_1)$. 
		One can check that any pattern (of the second type) in Figure \ref{figure_avoiding} does not occur
		at every node on $P_1$ appeared before $t_{j_k}$ because the forbidden pattern appears only when $\ell_k$ crosses $\ell_t$ (for some $t > k$ with $t \neq r+1$) or 
		$\ell_{r_1}$ crosses $\ell_t$ for some $t > r+1$.  
		See Remark \ref{remark_avoiding} and Figure \ref{figure_proof_construct1}.
		
		Next, let $r_2 >r_1$ be the smallest index such that $\ell_{r_2}$ crosses $P_1$ before meeting $\ell_{n+1}$. Then we obtain an oriented path
		\[
			P_2 := \left(\ell_{r_2} \rightarrow P_1\right) := \begin{cases}
				(\ell_{r_2} \rightarrow \ell_{r_1} \rightarrow \ell_k \rightarrow \ell_{n+1}) & \text{if $\ell_{r_2}$ crosses $\ell_{r_1}$ first,} \\
				(\ell_{r_2} \rightarrow \ell_k \rightarrow \ell_{n+1}) & \text{if $\ell_{r_2}$ crosses $\ell_{k}$ first}. \\
			\end{cases}
		\]		
		where the orientation respects the orientation of $G({\bf i}, r_2)$.
		See Figure \ref{figure_proof_construct1}. One can similarly verify that $P_2$ does not contain any forbidden pattern (of the second type) in Figure \ref{figure_avoiding}.
		Inductively, we obtain an oriented path $P_p := \left(\ell_{r_p} \rightarrow P_{p-1}\right)$ for some $p$ such that there is no wire $\ell_i$ with $i > r_p$ crossing $P_p$ below $\ell_{n+1}$.
		
		To complete the construction of $P_D({\bf i}, k)$, 
		we apply the previous argument to the wire $\ell_{s_1} := \ell_{i_p + 1}$ as follows. If there is no wire $\ell_i$ with $i > s_1$ intersecting $\ell_{s_1}$ below $\ell_{n+1}$, 
		then the path $P_p \rightarrow \ell_{s_1}$ respecting the orientation of $G({\bf i}, r_p)$ does not contain any forbidden pattern (of the first type) in Figure \ref{figure_avoiding}.
		So, $P_p \rightarrow \ell_{s_1}$ is our desired rigorous path. 
		
		If not, let $s_2 > s_1$ be the smallest index such that $\ell_{s_2}$ crosses $\ell_{s_1}$ below $\ell_{n+1}$ and let 
		\[
			{Q}_2 := (\ell_{s_2} \rightarrow \ell_{s_1}).
		\]
		Inductively, we obtain ${Q}_q := (\ell_{s_q} \rightarrow {Q}_{q-1})$ where ${s_q} > s_{q-1}$ is the the smallest index such that $\ell_{s_q}$ crosses ${Q}_{q-1}$ below $\ell_{n+1}$
		and there is no wire $\ell_i$ with $i > q$ crossing ${Q}_q$ below $\ell_{n+1}$. Then, 
		\[
			P_p \rightarrow {Q}_q
		\]
		respecting the orientation of $G({\bf i}, r_p)$ is our desired rigorous path and this completes the proof.
	\end{proof}

	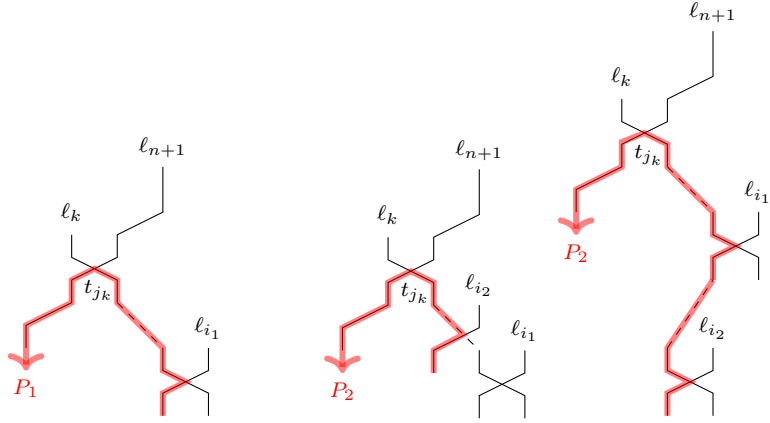
\begin{figure}[h]
		\begin{tikzpicture}[scale = 0.6]
		\tikzset{every node/.style = {font = \footnotesize}}
		\tikzset{red line/.style = {line width=0.5ex, red, semitransparent}}
		
		\draw (1,3) -- (1,3.5)--(2,4)--(2,4.5)--(3,5)--(3,5.5)--(4,6)--(4,7) node[above] {$\ell_{n+1}$};
		\draw (2,5.5) -- (2,5)--(3,4.5)--(3,4);
		
		\draw (4,3) -- (4,2.5)--(5,2)--(5,1.5);
		\draw (5,3)--(5,2.5)--(4,2)--(4,1.5);
		
		\draw[dashed] (3,4)--(4,3);
		
		\node at (2,6) {$\ell_k$};
		\node[below] at (2.6,4.7) {$t_{j_k}$};
		\node[above] at (5,3) {$\ell_{i_1}$};
		
		
		\draw[red line, ->] (4,1.5)--(4,2)--(4.5,2.25)--(4,2.5)--(4,3)--(3,4)--(3,4.5)--(2.5,4.75)--(2,4.5)--(2,4)--(1,3.5)--(1,2.5);
		\node[below, red] at (1,2.5) {$P_1$};
		\end{tikzpicture} \hspace{1cm}
		\begin{tikzpicture}[scale = 0.6]
		\tikzset{every node/.style = {font = \footnotesize}}
		\tikzset{red line/.style = {line width=0.5ex, red, semitransparent}}
		\draw (1,3) -- (1,3.5)--(2,4)--(2,4.5)--(3,5)--(3,5.5)--(4,6)--(4,7) node[above] {$\ell_{n+1}$};
		\draw (2,5.5) -- (2,5)--(3,4.5)--(3,4);
		
		\draw (4,3) -- (4,2.5)--(5,2)--(5,1.5);
		\draw (5,3)--(5,2.5)--(4,2)--(4,1.5);
		
		\draw (4,4)--(4,3.5)--(3,3)--(3,2.5); 
		
		\draw[dashed] (3,4)--(4,3);
		
		\node at (2,6) {$\ell_k$};
		\node[below] at (2.6,4.7) {$t_{j_k}$};
		\node[above] at (5,3) {$\ell_{i_1}$};
		
		\node[above] at (4,4) {$\ell_{i_2}$};
		
		
		\draw[red line, ->] (3,2.5)--(3,3)--(3.65,3.35)--(3,4)--(3,4.5)--(2.5,4.75)--(2,4.5)--(2,4)--(1,3.5)--(1,2.5);
		\node[below, red] at (1,2.5) {$P_2$};
		\end{tikzpicture}
		\begin{tikzpicture}[scale = 0.6]
		\tikzset{every node/.style = {font = \footnotesize}}
		\tikzset{red line/.style = {line width=0.5ex, red, semitransparent}}
		
		\draw (1,3) -- (1,3.5)--(2,4)--(2,4.5)--(3,5)--(3,5.5)--(4,6)--(4,7) node[above] {$\ell_{n+1}$};
		\draw (2,5.5) -- (2,5)--(3,4.5)--(3,4);
		
		\draw (4,3) -- (4,2.5)--(5,2)--(5,1.5);
		\draw (5,3)--(5,2.5)--(4,2)--(4,1.5);
		
		\draw (3,0)--(3,-.5)--(4,-1)--(4,-1.5);
		\draw (4,0)--(4,-0.5)--(3,-1)--(3,-1.5);

		\draw[dashed] (3,0)--(4,1.5);
		\draw[dashed] (3,4)--(4,3);
		
		\node at (2,6) {$\ell_k$};
		\node[below] at (2.6,4.7) {$t_{j_k}$};
		\node[above] at (5,3) {$\ell_{i_1}$};
		\node[above] at (4,0) {$\ell_{i_2}$};
		

		\draw[red line] (4,1.5)--(3,0)--(3,-0.5)--(3.5,-0.75)--(3,-1)--(3,-1.5);
		\draw[red line, ->] (4,1.5)--(4,2)--(4.5,2.25)--(4,2.5)--(4,3)--(3,4)--(3,4.5)--(2.5,4.75)--(2,4.55)--(2,4)--(1,3.5)--(1,2.5);
		\node[below, red] at (1,2.5) {$P_2$};
		
		\end{tikzpicture}
		\caption{\label{figure_proof_construct1} Construction of canonical paths.}
	\end{figure}	
		
	Similarly, we obtain the following $A$-type analogue of Proposition \ref{proposition_canonical_D} where we omit the proof. 

	\begin{proposition}\label{proposition_canonical_A}
		Let ${\bf i} \in \Sigma_{n+1}$ and $t_{j_k}$ be the node at which $\ell_1$ and $\ell_{k+1}$ intersect for $k \in [n]$.  
		Then there exists a rigorous path $P_A({\bf i}, k) \in \mathcal{GP}({\bf i})$ such that 
		\begin{itemize}
			\item it has a unique peak $t_{j_k}$, 
			\item it travels from $\ell_{1}$ to $\ell_{k+1}$ at $t_{j_k}$, 			
			\item it is below $\ell_{1}$, 
			\item with respect to the wire-expression of $P_A({\bf i},k)$: 
			\[
				\ell_{s_1} \rightarrow \cdots \rightarrow \ell_{s_q} \rightarrow \ell_1 \rightarrow \ell_{k+1} \rightarrow \ell_{r_1} \rightarrow \cdots \rightarrow \ell_{r_p},
			\]
			the sequences $r_1, \dots, r_p$ and $s_1, \dots, s_q$ are increasing and decreasing, respectively.
		\end{itemize}		
	\end{proposition}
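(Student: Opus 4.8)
This proposition is the left--right mirror of Proposition~\ref{proposition_canonical_D}, and the plan is to run the two--stage greedy construction from that proof with $\ell_{n+1}$ replaced by $\ell_1$, the ``moving'' wire $\ell_k$ replaced by $\ell_{k+1}$, and ``increasing''/``below $\ell_{n+1}$'' replaced by their mirror versions. Fix $k\in[n]$ and let $t_{j_k}$ be the node $\ell_1\cap\ell_{k+1}$. In any orientation $G({\bf i},m)$ with $1\le m\le k$ the wire $\ell_1$ runs upward and $\ell_{k+1}$ runs downward, so any path that turns at $t_{j_k}$ has a local maximum there; this will be the unique peak of $P_A({\bf i},k)$.

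I would then build $P_A({\bf i},k)$ as two oriented arcs joined along the short stretch of $\ell_1$ through $t_{j_k}$. For the arc on the $\ell_{k+1}$--side, start from the segment of $\ell_{k+1}$ between $t_{j_k}$ and its lower end, and repeatedly splice in the admissible wire of extreme index --- as dictated by the mirror of the rule used in the proof of Proposition~\ref{proposition_canonical_D} --- that crosses the current arc below $\ell_1$; this produces a monotone index sequence $r_1,\dots,r_p$ and stops once no admissible wire remains. The arc on the $\ell_1$--side is produced by the same procedure applied to the appropriate neighbour of $\ell_{r_p}$, yielding a second monotone sequence $s_1,\dots,s_q$. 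Gluing the two arcs gives a path whose wire--expression, read from its initial lower end, is precisely $\ell_{s_1}\to\cdots\to\ell_{s_q}\to\ell_1\to\ell_{k+1}\to\ell_{r_1}\to\cdots\to\ell_{r_p}$ with the asserted monotonicities; by construction both arcs lie below $\ell_1$ and meet $\ell_1$ only through $t_{j_k}$, so the resulting path is below $\ell_1$ and $t_{j_k}$ is its only peak.

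It then remains to check that $P_A({\bf i},k)$ is genuinely a rigorous path. That it respects the orientation of the relevant $G({\bf i},m)$ and passes through each node at most once is immediate, since along each arc the spliced indices move strictly monotonically. The substantive point --- and the step I expect to be the main obstacle --- is ruling out forbidden fragments: by Remark~\ref{remark_avoiding} a forbidden fragment along $\ell_1$ could only come from a crossing of two wires of prescribed relative index and orientation, and the minimality built into each greedy splice excludes exactly these bad configurations, just as in the proof of Proposition~\ref{proposition_canonical_D}; concretely, the only new patterns that can appear on $\ell_1$ are the mirror images of the two patterns displayed there, and neither is forbidden. This case analysis is word--for--word parallel to the $D$--case already carried out, which is why the paper omits it.
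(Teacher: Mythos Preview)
Your proposal is correct and matches the paper's own treatment: the paper explicitly omits the proof of Proposition~\ref{proposition_canonical_A}, stating only that it is obtained ``similarly'' to Proposition~\ref{proposition_canonical_D}, and your outline is precisely the mirror of that argument with $\ell_{n+1}$ replaced by $\ell_1$ and $\ell_k$ by $\ell_{k+1}$. The two-stage greedy splice, the monotonicity of the spliced indices, and the forbidden-fragment check via Remark~\ref{remark_avoiding} all transfer verbatim under this reflection, exactly as you describe.
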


	We call a path $P_\bullet({\bf i}, k)$ constructed in Propositions \ref{proposition_canonical_D} and  \ref{proposition_canonical_A}
	{\em $\bullet$-canonical}. 
	Note that every $\bullet$-canonical path is $\bullet$-new by Corollary \ref{corollary_new_peak}. 
	
	\begin{example}\label{example_canonical_paths}
		Let ${\bf i} = (4,3,2,1,4,2,3,2,4,3) \in \Sigma_5$. 
		Then there are four $D$-canonical paths:	
		\begin{align*}
		P_D(\mathbf i,1) &= (\ell_4 \to \ell_2 \to \ell_1 \to \ell_5), &
		P_D(\mathbf i, 2) &= (\ell_4 \to \ell_2 \to \ell_5), \\
		P_D(\mathbf i, 3) &= (\ell_4 \to \ell_3 \to \ell_5), &
		P_D(\mathbf i,4) &= (\ell_4 \to \ell_5).
		\end{align*}
		These paths are given in Figure \ref{figure_D_canonical_path}.
		
		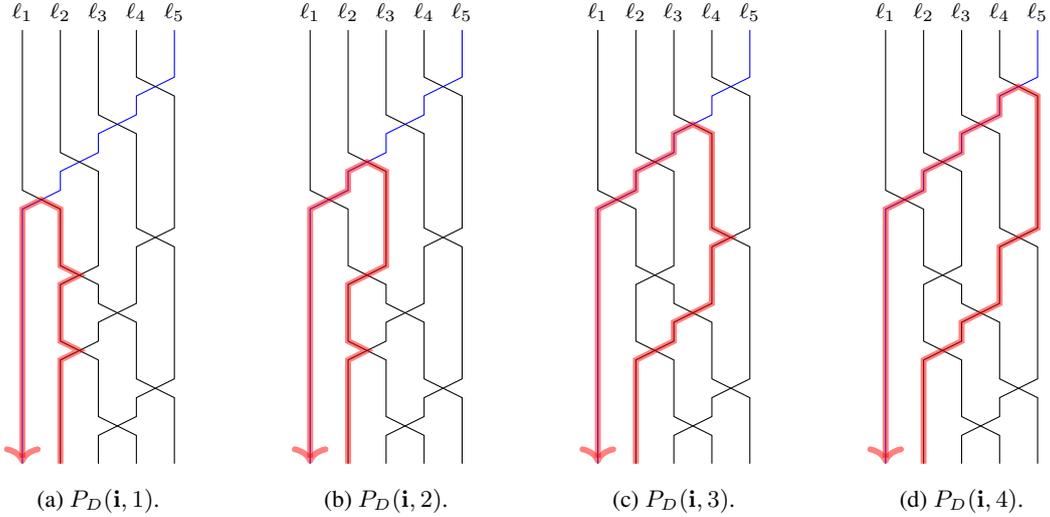
\begin{figure}[h]
			\begin{subfigure}[c]{0.23\textwidth}
				\centering
			\begin{tikzpicture}[scale = 0.5]
			\tikzset{every node/.style = {font = \footnotesize}}
			\tikzset{red line/.style = {line width=0.5ex, red, semitransparent}}
			
			\draw[blue] (0,0)--(0,6.75)--(1,7.25)--(1,7.75)--(2,8.25)--(2,8.75)--(3,9.25)--(3,9.75)--(4,10.25)--(4,11.5); 
			\draw (1,0)--(1,2.75)--(2,3.25)--(2,3.75)--(3,4.25)--(3,5.75)--(4,6.25)--(4,9.75)--(3,10.25)--(3,11.5); 
			\draw (2,0)--(2,0.75)--(3,1.25)--(3,1.75)--(4,2.25)--(4,5.75)--(3,6.25)--(3,8.75)--(2,9.25)--(2,11.5); 
			\draw (3,0)--(3,0.75)--(2,1.25)--(2,2.75)--(1,3.25)--(1,4.75)--(2,5.25)--(2,7.75)--(1,8.25)--(1,11.5); 
			\draw (4,0)--(4,1.75)--(3,2.25)--(3,3.75)--(2,4.25)--(2,4.75)--(1,5.25)--(1,6.75)--(0,7.25)--(0,11.5); 
			
			\node[above] at (0,11.5) {$\ell_1$};
			\node[above] at (1,11.5) {$\ell_2$};
			\node[above] at (2,11.5) {$\ell_3$};
			\node[above] at (3,11.5) {$\ell_4$};
			\node[above] at (4,11.5) {$\ell_5$};
			
			\draw[red line, ->] (1,0)--(1,2.75)--(1.5,3)--(1,3.25)--(1,4.75)--(1.5,5)--(1,5.25)--(1,6.75)--(0.5,7)--(0,6.75)--(0,0);
			\end{tikzpicture}			
			\caption{$P_D(\mathbf i, 1)$.}
		\end{subfigure}
	\begin{subfigure}[c]{0.23\textwidth}
		\centering
			\begin{tikzpicture}[scale = 0.5]
			\tikzset{every node/.style = {font = \footnotesize}}
			\tikzset{red line/.style = {line width=0.5ex, red, semitransparent}}
			
			\draw[blue] (0,0)--(0,6.75)--(1,7.25)--(1,7.75)--(2,8.25)--(2,8.75)--(3,9.25)--(3,9.75)--(4,10.25)--(4,11.5); 
			\draw (1,0)--(1,2.75)--(2,3.25)--(2,3.75)--(3,4.25)--(3,5.75)--(4,6.25)--(4,9.75)--(3,10.25)--(3,11.5); 
			\draw (2,0)--(2,0.75)--(3,1.25)--(3,1.75)--(4,2.25)--(4,5.75)--(3,6.25)--(3,8.75)--(2,9.25)--(2,11.5); 
			\draw (3,0)--(3,0.75)--(2,1.25)--(2,2.75)--(1,3.25)--(1,4.75)--(2,5.25)--(2,7.75)--(1,8.25)--(1,11.5); 
			\draw (4,0)--(4,1.75)--(3,2.25)--(3,3.75)--(2,4.25)--(2,4.75)--(1,5.25)--(1,6.75)--(0,7.25)--(0,11.5); 
			
			\node[above] at (0,11.5) {$\ell_1$};
			\node[above] at (1,11.5) {$\ell_2$};
			\node[above] at (2,11.5) {$\ell_3$};
			\node[above] at (3,11.5) {$\ell_4$};
			\node[above] at (4,11.5) {$\ell_5$};
			
			\draw[red line, ->] (1,0)--(1,2.75)--(1.5,3)--(1,3.25)--(1,4.75)--(2,5.25)--(2,7.75)--(1.5,8)--(1,7.75)--(1,7.25)--(0,6.75)--(0,0);
			\end{tikzpicture}			
			\caption{$P_D(\mathbf i, 2)$.}
		\end{subfigure}
	\begin{subfigure}[c]{0.23\textwidth}
		\centering
			\begin{tikzpicture}[scale = 0.5]
			\tikzset{every node/.style = {font = \footnotesize}}
			\tikzset{red line/.style = {line width=0.5ex, red, semitransparent}}
			
			\draw[blue] (0,0)--(0,6.75)--(1,7.25)--(1,7.75)--(2,8.25)--(2,8.75)--(3,9.25)--(3,9.75)--(4,10.25)--(4,11.5); 
			\draw (1,0)--(1,2.75)--(2,3.25)--(2,3.75)--(3,4.25)--(3,5.75)--(4,6.25)--(4,9.75)--(3,10.25)--(3,11.5); 
			\draw (2,0)--(2,0.75)--(3,1.25)--(3,1.75)--(4,2.25)--(4,5.75)--(3,6.25)--(3,8.75)--(2,9.25)--(2,11.5); 
			\draw (3,0)--(3,0.75)--(2,1.25)--(2,2.75)--(1,3.25)--(1,4.75)--(2,5.25)--(2,7.75)--(1,8.25)--(1,11.5); 
			\draw (4,0)--(4,1.75)--(3,2.25)--(3,3.75)--(2,4.25)--(2,4.75)--(1,5.25)--(1,6.75)--(0,7.25)--(0,11.5); 
			
			\node[above] at (0,11.5) {$\ell_1$};
			\node[above] at (1,11.5) {$\ell_2$};
			\node[above] at (2,11.5) {$\ell_3$};
			\node[above] at (3,11.5) {$\ell_4$};
			\node[above] at (4,11.5) {$\ell_5$};
			
			\draw[red line, ->] (1,0)--(1,2.75)--(2,3.25)--(2,3.75)--(3,4.25)--(3,5.75)--(3.5,6)--(3,6.25)--(3,8.75)--(2.5,9)--(2,8.75)--(2,8.25)--(1,7.75)--(1,7.25)--(0,6.75)--(0,0);
			\end{tikzpicture}			
			\caption{$P_D(\mathbf i, 3)$.}
		\end{subfigure}
	\begin{subfigure}[c]{0.23\textwidth}
		\centering
			\begin{tikzpicture}[scale = 0.5]
			\tikzset{every node/.style = {font = \footnotesize}}
			\tikzset{red line/.style = {line width=0.5ex, red, semitransparent}}
			
			\draw[blue] (0,0)--(0,6.75)--(1,7.25)--(1,7.75)--(2,8.25)--(2,8.75)--(3,9.25)--(3,9.75)--(4,10.25)--(4,11.5); 
			\draw (1,0)--(1,2.75)--(2,3.25)--(2,3.75)--(3,4.25)--(3,5.75)--(4,6.25)--(4,9.75)--(3,10.25)--(3,11.5); 
			\draw (2,0)--(2,0.75)--(3,1.25)--(3,1.75)--(4,2.25)--(4,5.75)--(3,6.25)--(3,8.75)--(2,9.25)--(2,11.5); 
			\draw (3,0)--(3,0.75)--(2,1.25)--(2,2.75)--(1,3.25)--(1,4.75)--(2,5.25)--(2,7.75)--(1,8.25)--(1,11.5); 
			\draw (4,0)--(4,1.75)--(3,2.25)--(3,3.75)--(2,4.25)--(2,4.75)--(1,5.25)--(1,6.75)--(0,7.25)--(0,11.5); 
			
			\node[above] at (0,11.5) {$\ell_1$};
			\node[above] at (1,11.5) {$\ell_2$};
			\node[above] at (2,11.5) {$\ell_3$};
			\node[above] at (3,11.5) {$\ell_4$};
			\node[above] at (4,11.5) {$\ell_5$};
			
			\draw[red line, ->] (1,0)--(1,2.75)--(2,3.25)--(2,3.75)--(3,4.25)--(3,5.75)--(4,6.25)--(4,9.75)--(3.5,10)--(3,9.75)--(3,9.25)--(2,8.75)--(2,8.25)--(1,7.75)--(1,7.25)--(0,6.75)--(0,0);
			\end{tikzpicture}
			\caption{$P_D(\mathbf i, 4)$.}
			\end{subfigure}
			\caption{\label{figure_D_canonical_path} $D$-canonical paths. }
		\end{figure}
	\end{example}
	
		The following lemma is a direct consequence of the proofs of Propositions \ref{proposition_canonical_D} and \ref{proposition_canonical_A}:
	\begin{lemma}\label{lemma_canoncial_paths_index_zero}
		Let $\mathbf i \in \Sigma_{n+1}$. Then we have the following.
		\begin{enumerate}
			\item If $\textup{ind}_{D}(\mathbf i) =0$, then for $k \in [n]$, the $D$-canonical  path $P_{D}(\mathbf i, k)$ is given by
			\[
			\ell_k \to \ell_{n+1} \to \ell_{k+1}
			\]
			with respect to the wire-expression.
			\item If $\textup{ind}_A(\mathbf i ) = 0$, then for $k \in [n]$, the $A$-canonical  path $P_{A}(\mathbf i,k)$ is given by
			\[
			\ell_k \to \ell_1 \to \ell_{k+1}
			\]
			with respect to the wire-expression.			
		\end{enumerate}
	\end{lemma}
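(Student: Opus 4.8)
The plan is to read the statement off directly from the constructions in the proofs of Propositions~\ref{proposition_canonical_D} and~\ref{proposition_canonical_A}, after first normalizing $\mathbf i$ by $2$-moves. Since $\bullet$-canonical paths depend only on the commutation class of $\mathbf i$ (Corollary~\ref{corollary_oldpath} together with the identifications $\phi(\mathbf i,\mathbf i')$), I may replace $\mathbf i$ by any representative of $[\mathbf i]\in\widetilde{\Sigma}_{n+1}$; as $\mathrm{ind}_D(\mathbf i)=0$, Proposition~\ref{proposition_D} lets me take $\mathbf i=\mathbf i_D^-\,D_n$. I will treat the case $\bullet=D$ only; the case $\bullet=A$ is its mirror image (reflect the wiring diagram left--right, trading $\ell_{n+1}$ for $\ell_1$).

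The crux is the first step: pinning down the combinatorial shape of $G(\mathbf i_D^-D_n)$. Here $|\mathbf i_D^-|=N-n=\binom{n}{2}$, which equals the length of the longest element of $\mathfrak{S}_n$ (acting on positions $1,\dots,n$); moreover $\mathbf i_D^-$ uses only the letters $1,\dots,n-1$, because the $n$ nodes lying on $\ell_{n+1}$ are exactly those of the $D_n$-block, so $\ell_{n+1}$ stays in the rightmost column throughout $\mathbf i_D^-$ and a letter $n$ would be a crossing in that column. Being a reduced word of length $\binom{n}{2}$ in $\mathfrak{S}_n$, the word $\mathbf i_D^-$ is therefore a reduced word for the longest element of $\mathfrak{S}_n$, so after the $\mathbf i_D^-$-block the wires $\ell_1,\dots,\ell_n$ occupy positions $1,\dots,n$ in the reversed order $\ell_n,\ell_{n-1},\dots,\ell_1$, while $\ell_{n+1}$ is still in position $n+1$. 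The block $D_n=(n,n-1,\dots,1)$ then drags $\ell_{n+1}$ one column leftward at each step, so that $\ell_{n+1}$ meets the wires $\ell_1,\ell_2,\dots,\ell_n$ in exactly this order. I extract the two facts I need: \emph{(i)} reading $\ell_{n+1}$ from top to bottom, its nodes occur in the order $\ell_{n+1}\cap\ell_1,\,\ell_{n+1}\cap\ell_2,\,\dots,\,\ell_{n+1}\cap\ell_n$; and \emph{(ii)} for every $m$, the crossing $\ell_m\cap\ell_{n+1}$ is the \emph{lowest} node lying on the wire $\ell_m$, since every other crossing of $\ell_m$ occurs inside the $\mathbf i_D^-$-block.

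The second step is to feed (i) and (ii) into the construction of $P_D(\mathbf i,k)$ from the proof of Proposition~\ref{proposition_canonical_D}, for a fixed $k\in[n]$. That construction starts from $P_0=(\ell_k\to\ell_{n+1})$, the portion of $\ell_k$ running from $L_k$ up to $t_{j_k}=\ell_k\cap\ell_{n+1}$. By (ii) there is no crossing on $\ell_k$ below $t_{j_k}$, hence no index $r_1>k$ with $\ell_k$ meeting $\ell_{r_1}$ before reaching $\ell_{n+1}$; the inductive construction of the $r_i$'s never gets started, $p=0$, and the ascending half of the path is just $\ell_k$. For the descending half one takes $\ell_{s_1}=\ell_{k+1}$ --- the wire through $L_{k+1}$, to which the path must run since it begins at $L_k$ --- and by (ii) once more no wire $\ell_i$ with $i>k+1$ meets $\ell_{k+1}$ below $\ell_{n+1}$, so the construction outputs $P_0\to\ell_{k+1}$: the path ascends $\ell_k$ to $t_{j_k}$, crosses onto $\ell_{n+1}$, descends along $\ell_{n+1}$ to the node immediately below $t_{j_k}$ on $\ell_{n+1}$ --- which by (i) is $\ell_{n+1}\cap\ell_{k+1}$, so no further node is met before it --- crosses onto $\ell_{k+1}$, and descends to $L_{k+1}$. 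Its wire-expression is therefore exactly $\ell_k\to\ell_{n+1}\to\ell_{k+1}$, as claimed; rigorousness, and in particular the absence of forbidden fragments, is already guaranteed by Proposition~\ref{proposition_canonical_D}, so I would not re-verify it. The $A$-case is identical after the left--right reflection, using that $\mathbf i_A^-$ is a reduced word for the longest element of $\mathfrak{S}_n$ on positions $2,\dots,n+1$ in the letters $2,\dots,n$, that the nodes on $\ell_1$ read from top to bottom as $\ell_1\cap\ell_{n+1},\ell_1\cap\ell_n,\dots,\ell_1\cap\ell_2$, and that $\ell_1\cap\ell_{k+1}$ is the lowest node on $\ell_{k+1}$.

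The only genuine obstacle is the first step --- extracting from the bare hypothesis $\mathrm{ind}_D(\mathbf i)=0$ the explicit picture of $G(\mathbf i_D^-D_n)$, namely the reversed block $\ell_n,\dots,\ell_1$ feeding into $D_n$ and the consequent facts (i) and (ii). Once that is in hand, the remainder is a line-by-line reading of the construction in Proposition~\ref{proposition_canonical_D}, which is precisely why the lemma is stated as a direct consequence of that proof.
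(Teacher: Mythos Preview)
Your proof is correct and follows exactly the approach the paper indicates: the paper's own proof is just the one-line remark that the lemma is a direct consequence of the constructions in Propositions~\ref{proposition_canonical_D} and~\ref{proposition_canonical_A}, and you have simply spelled out that consequence in detail. Your normalization $\mathbf i\sim\mathbf i_D^-\,D_n$, the observation that $\mathbf i_D^-$ is a reduced word for the longest element of $\mathfrak S_n$ on letters $1,\dots,n-1$, and the resulting facts (i) and (ii) are precisely what one needs to see that the inductive steps in the construction of $P_D(\mathbf i,k)$ never fire, yielding $p=0$ and the bare path $\ell_k\to\ell_{n+1}\to\ell_{k+1}$.
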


	We are now ready to prove the main proposition of this section.
	
	\begin{proposition}\label{proposition_strictly_increasing}
		For any ${\bf i} \in \Sigma_{n+1}$ and any $\bullet = D, A$, we have 
		\begin{equation}\label{equation_increasing}
			\lVert {\bf i} \rVert \geq \lVert C_\bullet ({\bf i}) \rVert + n.
		\end{equation}
		Moreover, the inequality~\eqref{equation_increasing} is strict for some $\bullet$ if 
				$\mathrm{ind}_A({\bf i}) \cdot \mathrm{ind}_D({\bf i}) > 0$
	\end{proposition}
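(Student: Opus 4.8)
The plan is to recast everything in terms of counting rigorous paths and to compare $\mathcal{GP}({\bf i})$ with $\mathcal{GP}(C_\bullet({\bf i}))$. By Proposition~\ref{proposition_stringpolytopenr} we have $\lVert {\bf i}\rVert=|\mathcal{GP}({\bf i})|$ and $\lVert C_\bullet({\bf i})\rVert=|\mathcal{GP}(C_\bullet({\bf i}))|$, and by Corollary~\ref{corollary_oldpath} the canonical inclusion $\widetilde{\Psi}_\bullet({\bf i})$ realizes $\mathcal{GP}(C_\bullet({\bf i}))$ as precisely the set of rigorous paths on $G({\bf i})$ whose node set avoids $\ell_\bullet$. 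Hence
\[
	|\mathcal{GP}({\bf i})| \;=\; |\mathcal{GP}(C_\bullet({\bf i}))| \;+\; \#\{\, P\in\mathcal{GP}({\bf i}) : P \text{ is }\bullet\text{-new}\,\},
\]
so~\eqref{equation_increasing} is equivalent to exhibiting at least $n$ distinct $\bullet$-new rigorous paths on $G({\bf i})$. I would take these to be the $\bullet$-canonical paths $P_\bullet({\bf i},1),\dots,P_\bullet({\bf i},n)$ of Propositions~\ref{proposition_canonical_D} and~\ref{proposition_canonical_A}: they are pairwise distinct, since $P_\bullet({\bf i},k)$ has a unique peak at the node where $\ell_\bullet$ crosses $\ell_k$ (for $\bullet=D$) or $\ell_{k+1}$ (for $\bullet=A$) and these $n$ peaks are distinct, and each is $\bullet$-new by Corollary~\ref{corollary_new_peak}. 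This proves~\eqref{equation_increasing}.

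For the strict inequality, assume $\mathrm{ind}_D({\bf i})>0$ and $\mathrm{ind}_A({\bf i})>0$. By the displayed identity, strictness for $\bullet$ holds exactly when $G({\bf i})$ carries a $\bullet$-new path other than $P_\bullet({\bf i},1),\dots,P_\bullet({\bf i},n)$; since each of these canonical paths has a single peak, lying on $\ell_\bullet$, it suffices to produce---for $\bullet=D$ or $\bullet=A$---a $\bullet$-new path that either has two or more peaks on $\ell_\bullet$, or has one peak on $\ell_\bullet$ but a different wire-expression from the corresponding canonical path. The input is that $\mathrm{ind}_D({\bf i})>0$ gives, after 2-moves, ${\bf i}={\bf i}_D^-\,D_n\,{\bf i}_D^+$ with ${\bf i}_D^+\neq\emptyset$, so $G({\bf i})$ has a node lying strictly below the staircase of $\ell_{n+1}$---trapped beneath one of its ``steps''---and symmetrically on the $\ell_1$ side.

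First I would attempt the direct construction: starting from such a trapped node, run the greedy procedure in the proof of Proposition~\ref{proposition_canonical_D} (building the path upward from a lower end, then downward to the right, adjoining a wire whenever no forbidden fragment of Figure~\ref{figure_avoiding} is created), but forcing the path to touch $\ell_{n+1}$ both before and after going around the trapped node; this yields a $D$-new path with two peaks on $\ell_{n+1}$, hence one that is not $D$-canonical. The hard part is that this ``second-peak'' excursion can be obstructed when the node below $\ell_{n+1}$ lies too low in the diagram---exactly the phenomenon occurring for ${\bf i}=(2,1,2)$, where $\mathrm{ind}_D({\bf i})=1$ yet $\lVert{\bf i}\rVert=\lVert C_D({\bf i})\rVert+2$. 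I therefore expect the clean formulation to be a proof by contradiction: if strictness fails for both $\bullet$, then the $n$ canonical paths exhaust the $D$-new paths \emph{and} the $A$-new paths, and I would use the forbidden-fragment rules together with the explicit shapes of the canonical paths (Propositions~\ref{proposition_canonical_D},~\ref{proposition_canonical_A} and Lemma~\ref{lemma_canoncial_paths_index_zero}) to pin down the position of every node below $\ell_{n+1}$ and below $\ell_1$, and show these constraints force ${\bf i}_D^+=\emptyset$ or ${\bf i}_A^+=\emptyset$, i.e.\ $\mathrm{ind}_D({\bf i})\cdot\mathrm{ind}_A({\bf i})=0$. Establishing that last implication---classifying exactly when a node below one extremal wire fails to generate an extra new path, and checking that such a configuration is incompatible with any node below the other extremal wire---is the step I expect to be the main obstacle; the path-counting identity and the canonical-path constructions are already in place and make up the routine part.
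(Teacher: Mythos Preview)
Your treatment of the first inequality is correct and matches the paper: use Corollary~\ref{corollary_oldpath} to identify $\mathcal{GP}(C_\bullet({\bf i}))$ with the non-$\bullet$-new paths, then count the $n$ canonical paths $P_\bullet({\bf i},k)$ as distinct $\bullet$-new paths.

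The second part, however, is only a sketch with an acknowledged gap, and neither of your two proposed routes is the one that actually works cleanly. Your ``direct construction'' looks for a $D$-new path with \emph{two} peaks on $\ell_{n+1}$, using only a node trapped below $\ell_{n+1}$; but as you yourself note with ${\bf i}=(2,1,2)$, this can fail, and you have not said how the hypothesis $\mathrm{ind}_A({\bf i})>0$ would rescue it. Your ``proof by contradiction'' proposes to classify all ${\bf i}$ for which the $\bullet$-canonical paths exhaust the $\bullet$-new ones for both $\bullet$; this is in principle possible but amounts to proving the full equivalence in Theorem~\ref{theorem_simplicial}, and you have not indicated how to carry it out.

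The paper's argument is a direct construction, but organized around the \emph{interaction} of the two extremal wires $\ell_1$ and $\ell_{n+1}$ rather than around a single one. The governing datum is: which wires $\ell_s$ cross $\ell_{n+1}$ \emph{below} $\ell_1$, and which wires $\ell_b$ cross $\ell_1$ \emph{below} $\ell_{n+1}$. In the generic case one builds a rigorous path passing through the node $\ell_1\cap\ell_{n+1}$ (so it is simultaneously $D$-new and $A$-new), either of the shape $\ell_{b_0}\to\ell_{n+1}\to\ell_1\to\ell_{b_0+1}$, which lies below neither extremal wire and is therefore not $\bullet$-canonical for either $\bullet$, or one produces two distinct rigorous paths both peaking at $\ell_1\cap\ell_{n+1}$, so at most one of them can be the unique $\bullet$-canonical path with that peak. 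In the degenerate cases (no such $s$, or no such $b$) one uses that $\ell_1\cap\ell_{n+1}$ sits in an extreme column and builds an $A$-new (resp.\ $D$-new) path that is not below $\ell_1$ (resp.\ $\ell_{n+1}$). The point you are missing is that the right object to build is a path touching \emph{both} $\ell_1$ and $\ell_{n+1}$; this is what makes the double hypothesis $\mathrm{ind}_A({\bf i})\cdot\mathrm{ind}_D({\bf i})>0$ do real work.
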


	\begin{proof}
		In Propositions~\ref{proposition_canonical_D} and~\ref{proposition_canonical_A}, we have found  $n$ distinct $\bullet$-canonical paths in $\mathcal{GP}({\bf i})$, which in particular are $\bullet$-new for $\bullet = D$ and $A$ respectively. 
		The inequality \eqref{equation_increasing} then immediately follows. 
		
		
		It remains to verify the second statement. 
		Assume that $\mathrm{ind}_D({\bf i}) > 0$ and $\mathrm{ind}_A({\bf i}) > 0$. 
		We shall construct a rigorous path that is $\bullet$-new but \emph{not} $\bullet$-canonical in $\mathcal{GP}({\bf i})$ for some $\bullet = A, D$. 
		The existence of such a path would imply that
		the inequality \eqref{equation_increasing} is strict. 
		
		The following three cases are separately taken into account.
		\begin{itemize}
			\item (Case I.) There exists at least one pair $(s, b)$ of indices satisfying $1 < s, \, b < n$ and the following$\colon$
			 \begin{itemize}
			  \item $\ell_s$ intersects $\ell_{n+1}$ below $\ell_1$, and 
			  \item $\ell_b$ intersects $\ell_{1}$ below $\ell_{n+1}$.
			  \end{itemize}
			\item (Case II.) There does not exist any $s$ satisfying $1 < s < n$ and $\ell_s$ intersects $\ell_{n+1}$ below $\ell_1$.
			\item (Case III.) There does not exist any $b$ satisfying $1 < b < n$ and $\ell_b$ intersects $\ell_{1}$ below $\ell_{n+1}$.
		\end{itemize}

		For (Case I), let $s_0$ (resp. $b_0$) be the smallest (resp. largest) index among $s$ (resp. $b$) satisfying $1 < s < n$ and $\ell_s$ meets $\ell_{n+1}$ below $\ell_{1}$ (resp. $1 < b < n$ and $\ell_b$ meets $\ell_{1}$ below $\ell_{n+1}$). Then we deal with two sub cases:

				\begin{figure}[h]
					\begin{subfigure}[b]{0.3\textwidth}
						\centering
\begin{tikzpicture}[scale = 0.45]
\tikzset{every node/.style = {font = \footnotesize}}
\tikzset{red line/.style = {line width=0.5ex, red, semitransparent}}

\draw (8,0)--(8,4)--(0,8)--(0,11); 
\draw (0,0)--(0,4)--(8,8)--(8,11); 

\draw (3,0)--(3,2);
\draw[dashed] (3,2)--(3,3)--(2,4);
\draw (2,4)--(2,4.5)--(1,5)--(1,5.5);
\draw[dashed] (1,5.5)--(1,6.5);
\draw (1,6.5)--(1,7)--(2,7.5)--(2,8);
\draw[dashed] (2,8)--(3,9);
\draw (3,9)--(3,10)--(4,10.5)--(4,11);

\draw (5,0)--(5,2);
\draw[dashed] (5,2)--(5,3)--(6,4);
\draw (6,4)--(6,4.5)--(7,5)--(7,5.5);
\draw[dashed] (7,5.5)--(7,6.5);
\draw (7,6.5)--(7,7)--(6,7.5)--(6,8);
\draw[dashed] (6,8)--(4,9);
\draw (4,9)--(4,10)--(3,10.5)--(3,11);

\draw[red line, ->] (5,0)--(5,2)--(5,3)--(6,4)--(6,4.5)--(7,5)--(7,5.5)--(7,6.5)--(7,7)--(6.5,7.25)--(4,6)--(1.5,7.25)--(1,7)--(1,6.5)--(1,5.5)--(1,5)--(2,4.5)--(2,4)--(3,3)--(3,2)--(3,0);

\node[below] at (8,0) {$\ell_1$};
\node[below] at (0,0) {$\ell_{n+1}$};
\node[below] at (3,0) {$\ell_{s_0}$}; 
\node[below] at (5,0) {$\ell_{b_0}$};

\end{tikzpicture}
						\caption{Case I-1: $s_0 > b_0 \, (s_0 = b_0+1).$}\label{labcasei-1}
					\end{subfigure}
				\begin{subfigure}[b]{0.6\textwidth}
					\centering
\begin{tikzpicture}[scale = 0.47]
\tikzset{every node/.style = {font = \footnotesize}}
\tikzset{red line/.style = {line width=0.5ex, red, semitransparent}}

\draw (8,0)--(8,4)--(0,8)--(0,9); 
\draw (0,0)--(0,4)--(8,8)--(8,9); 

\draw (5,0)--(5,1)--(4,1.5)--(4,2);
\draw[dashed] (4,2)--(3,3)--(2,4);
\draw (2,4)--(2,4.5)--(1,5)--(1,5.5);
\draw[dashed] (1,5.5)--(1,6.5);
\draw (1,6.5)--(1,7)--(2,7.5)--(2,8);

\draw (4,0)--(4,1)--(5,1.5)--(5,2);
\draw[dashed] (5,2)--(5,4.5);
\draw (5,4.5)--(5,5)--(6,5.5)--(6,6);
\draw[dashed] (6,6)--(7,6.5);
\draw (7,6.5)--(7,7)--(6,7.5)--(6,8);

\draw[blue] (3,0)--(3,2);
\draw[blue, dashed] (3,2)--(1.5,3.5);
\draw[blue] (1.5,3.5)--(1.5,4)--(0,4.75);

\draw[purple] (6,0)--(6,1);
\draw[red, dashed] (6,1)--(6,3)--(7,3.5);
\draw[red] (7,3.5)--(7,4)--(8,4.5);

\draw[purple] (6,0)--(6,1);
\draw[purple, dashed] (6,1)--(6,3)--(7,3.5);
\draw[purple] (7,3.5)--(7,4)--(8,4.5);

\node[below] at (5,0) {$\ell_{s_0}$};
\node[below] at (4,0) {$\ell_{b_0}$};
\node[below, blue] at (2.8,0) {$\ell_{b_0+1}$};
\node[below, purple] at (6.2,0) {$\ell_{s_0-1}$};

\node[below] at (0,0) {$\ell_{n+1}$}; 
\node[below] at (8,0) {$\ell_1$};

\draw[red line, ->] (6,0)--(6,1)--(6,3)--(7,3.5)--(7,4)--(7.5,4.25)--(4,6)--(1.5,4.75)--(2,4.5)--(2,4)--(3,3)--(4,2)--(4,1.5)--(5,1)--(5,0);

\end{tikzpicture}
\begin{tikzpicture}[scale = 0.5]
\tikzset{every node/.style = {font = \footnotesize}}
\tikzset{red line/.style = {line width=0.5ex, red, semitransparent}}

\draw (8,0)--(8,4)--(0,8)--(0,9); 
\draw (0,0)--(0,4)--(8,8)--(8,9); 

\draw (5,0)--(5,1)--(4,1.5)--(4,2);
\draw[dashed] (4,2)--(3,3)--(2,4);
\draw (2,4)--(2,4.5)--(1,5)--(1,5.5);
\draw[dashed] (1,5.5)--(1,6.5);
\draw (1,6.5)--(1,7)--(2,7.5)--(2,8);

\draw (4,0)--(4,1)--(5,1.5)--(5,2);
\draw[dashed] (5,2)--(5,4.5);
\draw (5,4.5)--(5,5)--(6,5.5)--(6,6);
\draw[dashed] (6,6)--(7,6.5);
\draw (7,6.5)--(7,7)--(6,7.5)--(6,8);

\draw[blue] (3,0)--(3,2);
\draw[blue, dashed] (3,2)--(1.5,3.5);
\draw[blue] (1.5,3.5)--(1.5,4)--(0,4.75);

\draw[purple] (6,0)--(6,1);
\draw[red, dashed] (6,1)--(6,3)--(7,3.5);
\draw[red] (7,3.5)--(7,4)--(8,4.5);

\draw[purple] (6,0)--(6,1);
\draw[purple, dashed] (6,1)--(6,3)--(7,3.5);
\draw[purple] (7,3.5)--(7,4)--(8,4.5);

\node[below] at (5,0) {$\ell_{s_0}$};
\node[below] at (4,0) {$\ell_{b_0}$};
\node[below, blue] at (2.8,0) {$\ell_{b_0+1}$};
\node[below, purple] at (6.2,0) {$\ell_{s_0-1}$};

\node[below] at (0,0) {$\ell_{n+1}$}; 
\node[below] at (8,0) {$\ell_1$};

\draw[line width = 0.5ex, blue, semitransparent, ->] (4,0)--(4,1)--(5,1.5)--(5,2)--(5,4.5)--(5,5)--(5.5,5.25)--(4,6)--(0.75,4.375)--(1.5,4)--(1.5,3.5)--(3,2)--(3,0);
\end{tikzpicture}
					\caption{Case I-2: $s_0 < b_0$.}\label{figure_22p1p2}
				\end{subfigure}
					\caption{\label{figure_5_7_1} $\bullet$-new paths: Case I.}
				\end{figure}
				
		\begin{itemize}
		\item $(\mbox{Case I-1}; s_0 > b_0.)$  Due to the ``smallest'' condition on $s_0$, two wires $\ell_{s_0}$ and $\ell_{s_0-1}$ meet at a node above both $\ell_1$ and $\ell_{n+1}$. 
		Moreover, $\ell_{s_0-1}$ intersects $\ell_{n+1}$ above $\ell_1$.
		In other words, $\ell_{s_0-1}$ intersects $\ell_1$ below $\ell_{n+1}$. 
		Thus, $s_0 = b_0+1$ because of the ``largest'' condition on $b_0$.
		
		We consider a path $P$ (red path in Figure \ref{labcasei-1})
		in $G({\bf i}, b_0)$ that is expressed as 
		\[
			P := (\ell_{b_0} \rightarrow \ell_{n+1} \rightarrow \ell_1 \rightarrow \ell_{b_0+1}). 
		\]
		The path $P$ is a rigorous path since it does not possess any forbidden fragment in Figure \ref{figure_avoiding} by the following reasons (see Figure \ref{figure_5_7_1}):
		\begin{itemize}
			\item No forbidden fragment appears at any node on the wire $\ell_{b_0}$ since $b_0$ is the largest index of a wire having upward orientation
			(see Remark \ref{remark_avoiding}).
			\item When $P$ goes down along the wire $\ell_{n+1}$, if a forbidden fragment occurs, then it implies that some wire $\ell_k$ with downward orientation (or equivalently 
			with $k>b_0$) intersects
			the wire $\ell_1$ below $\ell_{n+1}$. This contradicts the ``largest assumption'' of $b_0$.
			\item On $\ell_1$ part of the path $P$, a forbidden fragment appears only when there exists some wire $\ell_k$ with upward orientation such that 
			$\ell_k$ intersects $\ell_1$ above $\ell_{n+1}$. That is, $k < b_0$ (by the orientation) and $\ell_k$ intersects $\ell_{n+1}$ below $\ell_1$.
			This contradicts the ``smallest assumption'' of $s_0$. 
			\item When $P$ goes down along $\ell_{s_0}$ (= $\ell_{b_0 + 1}$), a forbidden fragment appears if there is some $\ell_k$ with downward orientation 
			and $k < s_0$. However, $s_0$ is the smallest index of a wire in $G({\bf i}, b_0)$ with downward orientation. So, no forbidden fragment exists.
		\end{itemize}
		Note that $P$ is \emph{not} $\bullet$-canonical for any $\bullet = D, A$  
		since it is below neither $\ell_{n+1}$ nor $\ell_1$, while $P$ is both $D$ and $A$-new.  
		
		\item $(\mbox{Case I-2}; s_0 < b_0.)$ In this case, $\ell_{s_0}$ and $\ell_{b_0}$ intersect below both $\ell_1$ and $\ell_{n+1}$. 
		We can construct two rigorous paths $P_1 \in G({\bf i}, s_0 - 1)$ and $P_2 \in G({\bf i}, b_0)$ (red and blue ones in Figure~\ref{figure_22p1p2}) 
		where 
		\[
			P_1 := (\ell_{s_0-1} \rightarrow \cdots \rightarrow \ell_1 \rightarrow \ell_{n+1} \rightarrow \cdots \rightarrow \ell_{s_0}), \quad 
			P_2 := (\ell_{b_0} \rightarrow \cdots \rightarrow \ell_1 \rightarrow \ell_{n+1} \rightarrow \cdots \rightarrow \ell_{b_0+1}).
		\]
		
		The construction of each $P_i$ is similar to the canonical path construction described in Propositions \ref{proposition_canonical_D} and~\ref{proposition_canonical_A}.
		For $P_1$, we start with the wire $\ell_{s_0 - 1}$ with upward orientation in $G({\bf i}, s_0 - 1)$. Similar to Case I-1, we can easily see that no forbidden fragment 
		appears at any node on $\ell_{s_0 - 1}$. \vs{0.2cm}
		
		\begin{itemize}
			\item On the path $\ell_{s_0 - 1} \rightarrow \ell_1$ in $G({\bf i}, s_0 - 1)$, if some forbidden fragment appears at some node on $\ell_1$ below $\ell_{n+1}$, this implies that 
			there is some wire $\ell_{k_1}$ with upward orientation (i.e., $k_1 < s_0 - 1$) such that $\ell_{k_1}$ intersects $\ell_1$ below $\ell_{n+1}$. It follows that $\ell_{s_0 - 1}$ and 
			$\ell_k$ should intersect below $\ell_1$ so that 
			we obtain a path in $G({\bf i}, s_0 - 1)$:
			\[
				\ell_{s_0 - 1} \rightarrow \ell_{k_1} \rightarrow \ell_1.
			\]
			Inductively, whenever a forbidden fragment appears at some node on the path obtained in the previous step, we may insert a new wire to the path so that eventually get
			a path $\widetilde{P}_1$ in $G({\bf i}, s_0 - 1)$ such that 
			\[
				\widetilde{P}_1 := (\ell_{s_0 - 1} \rightarrow \ell_{k_1} \rightarrow \cdots \rightarrow \ell_{k_s} \rightarrow \ell_1)
			\]
			with no forbidden fragment. By the similar procedure, we can complete the construction 
			of our desired rigorous path $P_1$ by inserting wires (having downward orientation) 
			to the path $\widetilde{P}_1 \rightarrow \ell_{n+1} \rightarrow \ell_{s_0}$ in $G({\bf i}, s_0 - 1)$. 
		\end{itemize}
		\vs{0.2cm}
		The construction of $P_2$ is similar to the case of $P_1$.
		We note that both $P_1$ and $P_2$ have the unique peak $\ell_1 \cap \ell_{n+1}$ and are below $\ell_1$ and $\ell_{n+1}$. 
		Therefore, for any $\bullet = D, A$, either $P_1$ or $P_2$ is not 
		$\bullet$-canonical because there is exactly one $\bullet$-canonical path having the node at which $\ell_1$ and $\ell_{n+1}$ intersect as a peak. 
		\end{itemize}
		\vs{0.3cm}


	For (Case II), $\ell_1$ and $\ell_{n+1}$ must meet on the first column of $G({\bf i})$. Let $r$ be the largest index in $\{2,\dots, n\}$ such that $\ell_r$ has a node below $\ell_1$
	obtained by crossing some $\ell_k$ with $r > k$. Such an $r$ exists because $\mathrm{ind}_D({\bf i}) \cdot \mathrm{ind}_A({\bf i}) > 0$. 
	
	We construct a rigorous path $Q$ in $G({\bf i}, r)$ as follows. First, let us start with the path $Q_0 := (\ell_r \rightarrow \ell_1)$ in $G({\bf i}, r)$. Suppose that 
	some forbidden fragment appears at a node, say $t_{j_0}$, on $Q_0$ below $\ell_{n+1}$, that is, there exists a wire $\ell_{k_1}$ with upward orientation (i.e., $k_1 < r$) 
	which intersects $\ell_1$ below $\ell_{n+1}$ and on the left of $\ell_r$ (with respect to the orientation of $\ell_r$).
	We may further assume that $t_{j_0}$ is the nearest one from $\ell_r \cap \ell_1$ among such nodes on $\ell_1$
	(see Figure \ref{figure_5_7_2}).
	Then, we obtain a new path 
	\[
		Q_1 := (\ell_r \rightarrow \ell_1 \rightarrow \ell_{k_1})
	\]	
	on $G({\bf i},r)$.
	Similarly if $Q_1$ contains some forbidden fragment at some node, say $t_{j_1}$, nearest from $\ell_1 \cap \ell_{k_1}$, then we get a new path on $G({\bf i}, r)$:
	\[
		Q_2 := (\ell_r \rightarrow \ell_1 \rightarrow \ell_{k_1} \rightarrow \ell_{k_2}	)
	\]
	where $\ell_{k_1} \cap \ell_{k_2} = t_{j_1}$. Thus we inductively get a path in $G({\bf i}, r)$ 
	\[
		Q_s :=( \ell_r \rightarrow \ell_1 \rightarrow \cdots \rightarrow \ell_{k_s} )
	\]
	such that 
	\begin{itemize}
		\item $Q_s$ does not contain any forbidden fragment at any node below $\ell_{n+1}$ and on the left of $\ell_{r+1}$ with respect to the (downward) orientation of $\ell_{r+1}$,  
		\item $t_{j_{s-1}} = \ell_{k_{s-1}} \cap \ell_{k_s}$ is below $\ell_{n+1}$ and on the left of $\ell_{r+1}$, and  
		\item $s$ is maximal.
	\end{itemize}	
	We deal with two sub cases:

	\begin{itemize}
		\item $(\mbox{Case II-1.})$ Two wires $\ell_{k_s}$ and $\ell_{r+1}$ do not meet below $\ell_{n+1}$. Then we can construct a rigorous path in $G({\bf i}, r)$:
		\[
			Q := (\ell_r \rightarrow \ell_1 \rightarrow \ell_{k_1} \rightarrow \cdots \rightarrow \ell_{k_s} \rightarrow \ell_{n+1} \rightarrow \ell_{m_1} \rightarrow \cdots \rightarrow \ell_{m_t}
			\rightarrow \ell_{r+1}), 
		\]
		where $\ell_{m_1}, \dots, \ell_{m_t}$ are wires with downward orientation obtained in exactly the same way. 
		See Figure~\ref{fcase21}.
		\item $(\mbox{Case II-2.})$ Two wires $\ell_{k_s}$ and $\ell_{r+1}$ meet below $\ell_{n+1}$. Then we have a rigorous path:
		\[
			{Q} := (\ell_r \rightarrow \ell_1 \rightarrow \ell_{k_1} \rightarrow \cdots \rightarrow \ell_{k_s} \rightarrow \ell_{r+1}).
		\]
		See Figure \ref{fcase22}.
	\end{itemize}
	\vs{0.2cm}
	In any case, the path $Q$ is $A$-new but \emph{not} $A$-canonical since it is not below $\ell_1$, and hence $\lVert {\bf i} \rVert > \lVert C_A ({\bf i}) \rVert + n$.


		\begin{figure}[H]
			\begin{subfigure}[b]{0.49\textwidth}
				\centering
\begin{tikzpicture}[scale = 0.57]
\tikzset{every node/.style = {font = \footnotesize}}
\tikzset{red line/.style = {line width=0.5ex, red, semitransparent}}
\tikzset{blue line/.style = {line width=0.5ex, blue, semitransparent}}

\draw (0,0)--(0,6.25)--(5,8.75)--(5,10);
\draw (5,0)--(5,4.25)--(0,6.75)--(0,10);

\draw (1,0)--(1,5.75)--(2,6.25);
\draw[dashed] (2,6.25)--(2,6.75);
\draw (2,6.75)--(1,7.25)--(1,8.25);
\draw (2,0)--(2,1.75)--(3,2.25)--(3,4.75)--(4,5.25);
\draw[dashed] (4,5.25)--(4,7.75);
\draw (4,7.75)--(3,8.25)--(3,9.25);
\draw (3,0)--(3,1.75)--(2,2.25)--(2,5.25)--(3,5.75);
\draw[dashed] (3,5.75)--(3,7.25);
\draw (3,7.25)--(2,7.75)--(2,8.75);

\node[below] at (0,0) {$\ell_{n+1}$};
\node[below] at (1,0) {$\ell_{r+1}$};
\node[below] at (2,0) {$\ell_r$};
\node[below] at (3,0) {$\ell_{k_s}$};
\node[below] at (5,0) {$\ell_1$};

\draw[red line, ->] (2,0)--(2,1.75)--(3,2.25)--(3,4.75)--(3.5,5)--(2.5,5.5)--(3,5.75)--(3,7.25)--(2.5,7.5)--(1.5,7)--(2,6.75)--(2,6.25)--(1,5.75)--(1,0);

\end{tikzpicture}
%
%
%
%
%
%
\caption{Case II-1.}\label{fcase21}
			\end{subfigure}
		\begin{subfigure}[b]{0.49\textwidth}
			\centering
			\begin{tikzpicture}[scale = 0.6]
			\tikzset{every node/.style = {font = \footnotesize}}
			\tikzset{red line/.style = {line width=0.5ex, red, semitransparent}}
			\tikzset{blue line/.style = {line width=0.5ex, blue, semitransparent}}
			
			\draw (0,0)--(0,6.25)--(5,8.75)--(5,10);
			\draw (5,0)--(5,4.25)--(0,6.75)--(0,10);
			
			\draw (1,0)--(1,5.75)--(2,6.25)--(3,6.75)--(3,7.25)--(2,7.75)--(2,8.75);
\draw (2,0)--(2,1.75)--(3,2.25)--(3,4.75)--(4,5.25);
\draw[dashed] (4,5.25)--(4,7.75);
\draw (4,7.75)--(3,8.25)--(3,9.25);
			\draw (3,0)--(3,1.75)--(2,2.25)--(2,5.25)--(3,5.75)--(3,6.25)--(1,7.25)--(1,8.25);
			
			\node[below] at (0,0) {$\ell_{n+1}$};
			\node[below] at (1,0) {$\ell_{r+1}$};
			\node[below] at (2,0) {$\ell_r$};
			\node[below] at (3,0) {$\ell_{k_s}$};
			\node[below] at (5,0) {$\ell_1$};
			
			\draw[red line, ->] (2,0)--(2,1.75)--(3,2.25)--(3,4.75)--(3.5,5)--(2.5,5.5)--(3,5.75)--(3,6.25)--(2.5,6.5)--(1,5.75)--(1,0);
			\end{tikzpicture}
%
%
%
%
			\caption{Case II-2.}\label{fcase22}
		\end{subfigure}	
			\caption{\label{figure_5_7_2} $\bullet$-new paths: Case II.}
		\end{figure}

		Similarly, one can construct new paths for (Case III) and then $\lVert {\bf i} \rVert > \lVert C_D ({\bf i}) \rVert + n$ can be established. It finishes the proof.	
	\end{proof}

\begin{example}
	We describe some examples of reduced words whose $D$- and $A$-indices are both nonzero. For each reduced word, we also provide 
	 a rigorous path neither $A$- nor $D$-canonical. 
\begin{enumerate}
	\item Let $\mathbf i = (2,1,3,2,3,1) \in \Sigma_{4}$. Then $\mathbf i \sim (2,1,\underline{3,2,1},3) \sim (2,3,\underline{1,2,3},1)$. Hence $\text{ind}_A(\mathbf i) = 1$ and $\text{ind}_D(\mathbf i )=1$, so that $\text{ind}_A(\mathbf i )  \cdot \text{ind}_D(\mathbf i ) = 1 > 0$. This decomposition $\mathbf i$ is Case I-1 in the proof of Proposition~\ref{proposition_strictly_increasing}, and the path $\ell_2 \to \ell_4 \to \ell_1 \to \ell_3$ in $G(\mathbf i,2)$ is a rigorous path which is neither $A$-canonical nor $D$-canonical (see~Figure~\ref{figure_wd_oriented}).
	\item Consider $\mathbf i = (4,3,2,1,4,2,3,2,4,3) \in \Sigma_5$. Then $\mathbf i \sim (4,3,2,4,\underline{1,2,3,4},2,3)$, so that $\text{ind}_A(\mathbf i ) = 2$ and $\text{ind}_D(\mathbf i ) = 6$. In this case wires $\ell_1$ and $\ell_5$ meet on the first column of $G(\mathbf i)$ (see~Figure~\ref{figure_D_canonical_path}). The word $\mathbf i$ is Case II-1 in the proof of Proposition~\ref{proposition_strictly_increasing}. One can see that the rigorous path $\ell_4 \to \ell_1 \to \ell_2 \to \ell_5$ is neither $A$-canonical nor $D$-canonical.
\end{enumerate}
\end{example} 

	\begin{theorem}\label{theorem_simplicial}
	For a given ${\bf i} \in \Sigma_{n+1}$, the following statements are equivalent:
	\begin{enumerate}
		\item The associated string cone $C_{\bf i}$ is simplicial.
		\item There exists a sequence $(\sigma_1,\dots,\sigma_n) \in \{A, D\}^n$ such that 
\[
\mathrm{ind}_{\sigma_k}\left( C_{\sigma_{k+1}} \circ \cdots \circ C_{\sigma_n} ({\bf i}) \right) = 0  \quad \text{ for all }k = n,\dots,1.
\]
		\item The string cone $C_{\mathbf i}$ is unimodularly equivalent to the product of cones:
		\begin{equation}\label{eq_string_cone_r}
		\begin{split}
		&t_1 \geq 0;\\
		&t_2 \geq t_3 \geq 0; \\
		& \qquad \vdots \\
		&t_{N-n+1} \geq t_{N-n+2} \geq \cdots \geq t_N \geq 0.
		\end{split}
		\end{equation}
	\end{enumerate}
\end{theorem}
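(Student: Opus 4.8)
The plan is to establish the cycle of implications $(3) \Rightarrow (1) \Rightarrow (2) \Rightarrow (3)$. Throughout I use that $C_{\bf i}$ is a full-dimensional rational cone in $\R^N$ with exactly $\lVert {\bf i} \rVert = |\mathcal{GP}({\bf i})|$ facets (Proposition~\ref{proposition_stringpolytopenr}), so that $C_{\bf i}$ is simplicial if and only if $\lVert {\bf i} \rVert = N$. The implication $(3) \Rightarrow (1)$ is immediate: each block $t_j \geq t_{j+1} \geq \cdots \geq t_{j+\ell} \geq 0$ occurring in \eqref{eq_string_cone_r} is unimodularly equivalent (substitute $u_p = t_p - t_{p+1}$ and $u_{j+\ell} = t_{j+\ell}$) to a coordinate orthant, hence the product \eqref{eq_string_cone_r} is simplicial, and a unimodular equivalence carries simplicial cones to simplicial cones.

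For $(1) \Rightarrow (2)$ I first record, by induction on $n$ using Proposition~\ref{proposition_strictly_increasing}, that $\lVert {\bf i} \rVert \geq N$ for every ${\bf i} \in \Sigma_{n+1}$: indeed $\lVert {\bf i} \rVert \geq \lVert C_D({\bf i}) \rVert + n \geq \frac{n(n-1)}{2} + n = N$, the middle inequality being the inductive hypothesis applied to $C_D({\bf i}) \in \Sigma_n$. Now suppose $C_{\bf i}$ is simplicial, so $\lVert {\bf i} \rVert = N$. If $\mathrm{ind}_A({\bf i}) \cdot \mathrm{ind}_D({\bf i}) > 0$, the second assertion of Proposition~\ref{proposition_strictly_increasing} would give $\lVert {\bf i} \rVert \geq \lVert C_\bullet({\bf i}) \rVert + n + 1 \geq \frac{n(n-1)}{2} + n + 1 = N + 1$, a contradiction; hence $\mathrm{ind}_{\sigma_n}({\bf i}) = 0$ for some $\sigma_n \in \{A, D\}$. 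Then $\lVert C_{\sigma_n}({\bf i}) \rVert \leq \lVert {\bf i} \rVert - n = \frac{n(n-1)}{2}$, which together with the lower bound forces $\lVert C_{\sigma_n}({\bf i}) \rVert = \frac{n(n-1)}{2}$, i.e. $C_{\sigma_n}({\bf i}) \in \Sigma_n$ is simplicial. By the induction hypothesis there is a sequence $(\sigma_1, \dots, \sigma_{n-1}) \in \{A, D\}^{n-1}$ witnessing (2) for $C_{\sigma_n}({\bf i})$, and then $(\sigma_1, \dots, \sigma_{n-1}, \sigma_n)$ witnesses (2) for ${\bf i}$: the $k = n$ condition is $\mathrm{ind}_{\sigma_n}({\bf i}) = 0$, and the conditions for $k < n$ are exactly those for $C_{\sigma_n}({\bf i})$.

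For $(2) \Rightarrow (3)$ I argue by induction on $n$, the cases $n \leq 1$ being trivial. Let $(\sigma_1, \dots, \sigma_n)$ be as in (2) and put $\bullet = \sigma_n$, so $\mathrm{ind}_\bullet({\bf i}) = 0$; by the $A$/$D$ symmetry I assume $\bullet = D$. Using Lemma~\ref{lemma_2_move} I may replace ${\bf i}$ by the representative ${\bf i}_D^-\, D_n$ (so ${\bf i}_D^+ = \emptyset$ and $C_D({\bf i}) = {\bf i}_D^-$); then the nodes of $G({\bf i})$ not lying on $\ell_{n+1}$ are $t_1, \dots, t_{N-n}$ and are canonically identified with the nodes of $G(C_D({\bf i}))$, while the remaining nodes $t_{N-n+1}, \dots, t_N$ all lie on $\ell_{n+1}$. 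By Corollary~\ref{corollary_oldpath} the rigorous paths of $G({\bf i})$ that are not $D$-new are the images under $\widetilde{\Psi}_D$ of $\mathcal{GP}(C_D({\bf i}))$; since $\widetilde{\Psi}_D$ preserves wire-expressions and the above node identification, their string inequalities are precisely the defining inequalities of $C_{C_D({\bf i})}$ in the variables $t_1, \dots, t_{N-n}$. On the other hand, by the Key Lemma below the only $D$-new paths are the $n$ canonical ones $P_D({\bf i}, 1), \dots, P_D({\bf i}, n)$, which by Lemma~\ref{lemma_canoncial_paths_index_zero} have wire-expressions $\ell_k \to \ell_{n+1} \to \ell_{k+1}$, hence string inequalities $t_{(k)} - t_{(k+1)} \geq 0$ for $1 \leq k < n$ and $t_{(n)} \geq 0$, where $t_{(k)}$ denotes the node $\ell_k \cap \ell_{n+1}$; these involve only the variables on $\ell_{n+1}$ and, after reordering, form the chain $t_{(1)} \geq t_{(2)} \geq \cdots \geq t_{(n)} \geq 0$. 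Consequently, after a permutation of coordinates, $C_{\bf i} = C_{C_D({\bf i})} \times \{ t_{(1)} \geq \cdots \geq t_{(n)} \geq 0 \}$; applying the induction hypothesis to $C_D({\bf i})$ with the sequence $(\sigma_1, \dots, \sigma_{n-1})$ identifies the first factor with the product of the first $n-1$ blocks of \eqref{eq_string_cone_r}, and adjoining the last block of size $n$ produces exactly \eqref{eq_string_cone_r}.

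It remains to prove the \textbf{Key Lemma}: if $\mathrm{ind}_D({\bf i}) = 0$ then every $D$-new rigorous path equals some $P_D({\bf i}, k)$; this is the main obstacle. The relevant structural fact is that $\mathrm{ind}_D({\bf i}) = 0$ means no node of $G({\bf i})$ lies below $\ell_{n+1}$, so each wire $\ell_j$ with $j \leq n$ runs straight from its crossing $\ell_j \cap \ell_{n+1}$ down to $L_j$, and in particular $\ell_j \cap \ell_{n+1}$ is the lowest node on $\ell_j$. Given a $D$-new path $P \in \mathcal{GP}({\bf i})$ in $G({\bf i}, m)$, it starts at $L_m$ and therefore first meets the node $\ell_m \cap \ell_{n+1}$; a forbidden-fragment analysis (parallel to the proof of Proposition~\ref{proposition_canonical_D}, and using Corollary~\ref{corollary_new_iff}) should show that $P$ must switch onto $\ell_{n+1}$ there, then descend and leave $\ell_{n+1}$ only at the node $\ell_{m+1} \cap \ell_{n+1}$ --- the unique exit compatible with the terminal point $L_{m+1}$ --- and then descend straight to $L_{m+1}$, so that $P$ has wire-expression $\ell_m \to \ell_{n+1} \to \ell_{m+1}$, i.e. $P = P_D({\bf i}, m)$. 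I expect the delicate point to be ruling out the possibility that $P$ passes through $\ell_m \cap \ell_{n+1}$ without switching and reaches $\ell_{n+1}$ only later through another wire; here one uses that such a wire must be downward and that the resulting crossing of $\ell_{n+1}$ would be a forbidden fragment, exactly the configurations excluded in the proofs of Propositions~\ref{proposition_canonical_D} and~\ref{proposition_strictly_increasing}.
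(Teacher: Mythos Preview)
Your cycle $(3)\Rightarrow(1)\Rightarrow(2)\Rightarrow(3)$ is exactly the paper's route, and your treatment of $(1)\Rightarrow(2)$ spells out the induction that the paper compresses into the single phrase ``by Proposition~\ref{proposition_strictly_increasing}.'' In both proofs the heart of $(2)\Rightarrow(3)$ is what you call the Key Lemma---that when $\mathrm{ind}_D({\bf i})=0$ the only $D$-new rigorous paths are the $n$ canonical ones---which the paper simply asserts (``there are exactly $k$ many $\sigma_k$-new paths, which are $\sigma_k$-canonical paths'') without argument. So you have correctly isolated the one point that actually needs work.

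Your sketch of the Key Lemma misplaces the obstruction, however. At $\ell_m\cap\ell_{n+1}$ the two wires have opposite orientations, so Remark~\ref{remark_avoiding} does not apply and $P$ may legitimately pass straight through; likewise, if $P$ later descends on some $\ell_k$ with $m<k\le n$ to $\ell_k\cap\ell_{n+1}$, passing straight is again permitted (both wires are downward, but $k<n+1$, so neither clause of Remark~\ref{remark_avoiding} fires). No forbidden-fragment analysis is needed---orientation plus the endpoint constraint suffices. Work in the representative ${\bf i}={\bf i}_D^-\,D_n$: below the ${\bf i}_D^-$ block the only nodes are $t_{N-n+j}=\ell_j\cap\ell_{n+1}$, and each $\ell_j$ runs straight from $t_{N-n+j}$ down to $L_j$. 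If $P$ ever switches onto $\ell_{n+1}$, say at $t_{N-n+k}$, then (since $\ell_{n+1}$ is downward) $P$ meets only nodes $t_{N-n+j}$ with $j>k$, all on downward wires, so $P$ can terminate only at $L_{n+1}$ or at the first $L_j$ ($j>k$) where it switches off; reaching $L_{m+1}$ forces $k=m$ and $j=m+1$, i.e.\ the canonical path. (The case $k<m$ is impossible: $\ell_k$ is upward and has no node below $t_{N-n+k}$, so $P$ would have to arrive from $L_k\neq L_m$.) If $P$ never switches onto $\ell_{n+1}$, then $\mathrm{node}(P)$ avoids $\ell_{n+1}$ and $P\in\mathrm{Im}\,\widetilde\Psi_D$ by Corollary~\ref{corollary_oldpath}, hence $P$ is not $D$-new.
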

\begin{proof}
	By Proposition~\ref{proposition_strictly_increasing}, we have that $(1) \iff (2)$. Since $(3) \implies (1)$, it is enough to show that $(2)\implies(3)$. Suppose that $\mathbf {i}$ satisfies $(2)$, so that there exists a sequence $(\sigma_1,\dots,\sigma_n) \in \{A,D\}^n$ such that 
	\[
	\mathbf i \sim (E_{\sigma_n}(0) \circ \cdots \circ E_{\sigma_1}(0))(\emptyset) =: \mathbf{i}'.
	\]
	Here, $\sim$ is the equivalence relation on~$\Sigma_{n+1}$ defined in~\eqref{eq_def_of_2_move}. Hence it is enough to prove that the string cone $C_{\mathbf{i}'}$ is the product of cones in~\eqref{eq_string_cone_r} by~Lemma~\ref{lemma_2_move}. 
	
	For each $k = 1,\dots,n$,  consider the word $(E_{\sigma_k}(0) \circ \cdots \circ E_{\sigma_1}(0))(\emptyset)$. Then there are  exactly $k$ many $\sigma_k$-new paths, which are $\sigma_k$-canonical paths constructed in Propositions~\ref{proposition_canonical_D} and \ref{proposition_canonical_A}. Also by Lemma~\ref{lemma_canoncial_paths_index_zero}, one can see that these paths define inequalities
	\[
	t_{\frac{k(k-1)}{2}+1} \geq t_{\frac{k(k-1)}{2}+2} \geq \cdots \geq t_{\frac{k(k-1)}{2}+k} \geq 0.
	\] 
	Hence the result follows.
\end{proof}

\section{Gelfand--Cetlin type string polytopes}
\label{secGelfandCetlinTypeStringPolytopes}

In this section, we classify string polytopes which are unimodularly equivalent to the Gelfand--Cetlin polytope. Indeed, we give a necessary and sufficient condition on a reduced word $\mathbf i \in \Sigma_{n+1}$ such that the string polytope $\Delta_{\mathbf i}(\lambda)$ is unimodularly equivalent to the Gelfand--Cetlin polytope $GC(\lambda)$ for a regular dominant weight $\lambda$ (see~Theorem~\ref{thm_GC_type_string_polytope}). 

We first recall the definition of Gelfand--Cetlin polytopes from~\cite{GC1950}.
For a dominant weight $\lambda = \lambda_1 \varpi_1 + \cdots + \lambda_n \varpi_n \in \mathbf{t}_{+}^{\ast}$, the \defi{Gelfand--Cetlin polytope}{\footnote {It is also called the \emph{Gelfand--Zeitlin polytope}, the \emph{Gelfand--Zetlin polytope}, or the \emph{Gelfand--Tsetlin polytope}.}}, or simply the \defi{GC-polytope}, denoted by $GC(\lambda)$ is a closed convex polytope in $\mathbb{R}^N$, where $N = n(n+1)/2$ as in Section~\ref{secGleizerPostnikovDescription}. Using the coordinates $\{\xkj{k}{j} \mid 1 \leq j \leq k, 1 \leq k \leq n \}$ of~$\mathbb{R}^N$,
the Gelfand--Cetlin polytope $GC(\lambda)$ consisting of points $(\xkj{k}{j})_{k,j} \in \mathbb{R}^N$ satisfying the inequalities
\[
\xkj{k+1}{j} \geq \xkj{k}{j} \geq \xkj{k+1}{j+1}, \quad 1 \leq j \leq k, 1 \leq k \leq n
\]
where $\xkj{n+1}{j} := \lambda_j+\cdots+\lambda_n$ for all $j = 1,\dots,n$ and $\xkj{n+1}{n+1} = 0$ (see~Figure~\ref{fig_GC_polytope}). 
Equivalently, a point~$(\xkj{k}{j})_{k,j}$ is contained in $GC(\lambda)$ if and only if it satisfies the following inequalities:
\begin{equation}\label{eq_GC_polytope}
\begin{tikzcd}[column sep = 0cm]
\xkj{n+1}{1} \arrow[rd, "\textcolor{black}{{\geq}}" description,sloped,	color=white]  
&& \xkj{n+1}{2} \arrow[ld, "\textcolor{black}{{\geq}}" description,sloped,	color=white]  
\arrow[rd, "\textcolor{black}{{\geq}}" description,sloped,	color=white]  
&& \xkj{n+1}{3} \arrow[ld, "\textcolor{black}{{\geq}}" description,sloped,	color=white]  
\arrow[rd, "\textcolor{black}{{\cdots}}" description,sloped,	color=white]  
& \cdots 
& \xkj{n+1}{n} \arrow[rd, "\textcolor{black}{{\geq}}" description,sloped,	color=white]  
&& \xkj{n+1}{n+1} \arrow[ld, "\textcolor{black}{{\geq}}" description,sloped,	color=white]  \\
& \xkj{n}{1} \arrow[rd, "\textcolor{black}{{\geq}}" description,sloped,	color=white]  
&& \xkj{n}{2} \arrow[rd, "\textcolor{black}{{\cdots}}" description,sloped,	color=white]  
\arrow[ld, "\textcolor{black}{{\geq}}" description,sloped,	color=white]  
&{}&{} && \xkj{n}{n} \arrow[ld, "\textcolor{black}{{\geq}}" description,sloped,	color=white]  \\
&& \xkj{n-1}{1} \arrow[rd, "\textcolor{black}{{\geq}}" description,sloped,	color=white]  &{}&{}
&& \xkj{n-1}{n-1} \arrow[ld, "\textcolor{black}{{\geq}}" description,sloped,	color=white]  \\
&&& \ddots \arrow[rd, "\textcolor{black}{{\geq}}" description,sloped,	color=white]   && 
\iddots \arrow[ld, "\textcolor{black}{{\geq}}" description,sloped,	color=white]  \\
&&&& \xkj{1}{1}
\end{tikzcd}
\end{equation}
\begin{remark}
	There are several different conventions on coordinates to express the GC-polytope $GC(\lambda)$. For instance, in the paper of Littelmann~\cite{Li}, he used coordinates $g_{i,j}$ and the coordinates for the first row of~\eqref{eq_GC_polytope} consist of $g_{1,1},g_{1,2},\dots,g_{1,n}$ (not $g_{n,1},\dots,g_{n,n}$). 
\end{remark}

\begin{figure}
	\begin{subfigure}[b]{0.49\textwidth}
		\centering
		\tdplotsetmaincoords{70}{100}
		\begin{tikzpicture}[tdplot_main_coords, scale=0.8]
		
		\coordinate (1) at (4,2,4);
		\coordinate (2) at (4,2,2);
		\coordinate (3) at (4,0,4);
		\coordinate (4) at (4,0,0);
		\coordinate (5) at (2,0,2);
		\coordinate (6) at (2,0,0);
		\coordinate (7) at (2,2,2);
		
		\begin{scope}[color=gray!50, thin]
		\foreach \xi in {0,1,2,3,4,5} { \draw (\xi, 3,0) -- (\xi, 0,0) -- (\xi, 0, 5); }%
		\foreach \yi in {0,1,2,3} {\draw (5,\yi,0) -- (0,\yi,0) -- (0,\yi,5);}%
		\foreach \zi in {0,1,2,3,4,5} {\draw (5,0,\zi) -- (0,0,\zi) -- (0,3,\zi);}%
		\end{scope}

		\draw[->] (0,0,0) -- (5.5,0,0) node[anchor = north] {$x_{2,1}$};
		\draw[->] (0,0,0) -- (0,3.5,0) node[anchor = south] {$x_{2,2}$} ;
		\draw[->] (0,0,0) -- (0,0,5.5) node[anchor = south] {$x_{1,1}$};	
		
		\draw[thick] (1)--(2)--(4)--(3)--cycle;
		\draw[thick] (1)--(7)--(2);
		\draw[thick, dashed] (7)--(6);
		\draw[thick, dashed] (4)--(6)--(5)--(3);
		\draw[thick, dashed] (7)--(5);

		\foreach \x in {1,...,7}{
			\node[circle,fill=black,inner sep=1pt] at (\x) {};
		}
		
		\end{tikzpicture}
		\caption{The GC-polytope $GC(2\varpi_1+2\varpi_2)$.}
		\label{fig_GC_polytope}
	\end{subfigure}
	\begin{subfigure}[b]{0.49\textwidth}
		\centering
\tdplotsetmaincoords{70}{100}
\begin{tikzpicture}[tdplot_main_coords, scale=0.8]

\coordinate (1) at (0,0,0);
\coordinate (2) at (2,0,0);
\coordinate (3) at (4,2,0);
\coordinate (4) at (2,4,2);
\coordinate (5) at (0,4,2);
\coordinate (6) at (0,2,2);
\coordinate (7) at (0,2,0);

\begin{scope}[color=gray!50, thin]
\foreach \xi in {0,1,2,3,4,5} { \draw (\xi, 5,0) -- (\xi, 0,0) -- (\xi, 0, 3); }%
\foreach \yi in {0,1,2,3,4,5} {\draw (5,\yi,0) -- (0,\yi,0) -- (0,\yi,3);}%
\foreach \zi in {0,1,2,3} {\draw (5,0,\zi) -- (0,0,\zi) -- (0,5,\zi);}%
\end{scope}

\draw[->] (0,0,0) -- (5.5,0,0) node[anchor = north] {$t_1$};
\draw[->] (0,0,0) -- (0,5.5,0) node[anchor = south] {$t_2$} ;
\draw[->] (0,0,0) -- (0,0,3.5) node[anchor = south] {$t_3$};	

\draw[thick] (1)--(2)--(3)--(4)--(5)--(6)--cycle;
\draw[thick] (4)--(6)--(2);
\draw[thick, dashed] (5)--(7)--(3);
\draw[thick, dashed] (7)--(1);

\foreach \x in {1,...,7}{
	\node[circle,fill=black,inner sep=1pt] at (\x) {};
}

\end{tikzpicture}
\caption{The string polytope $\Delta_{(1,2,1)}(2\varpi_1 + 2\varpi_2)$.}
\label{fig_string_polytope}
\end{subfigure}
\caption{The GC-polytope $GC(\lambda)$ and the string polytope $\Delta_{(1,2,1)}(\lambda)$ for $\lambda = 2\varpi_1 + 2\varpi_2$.}

\end{figure}
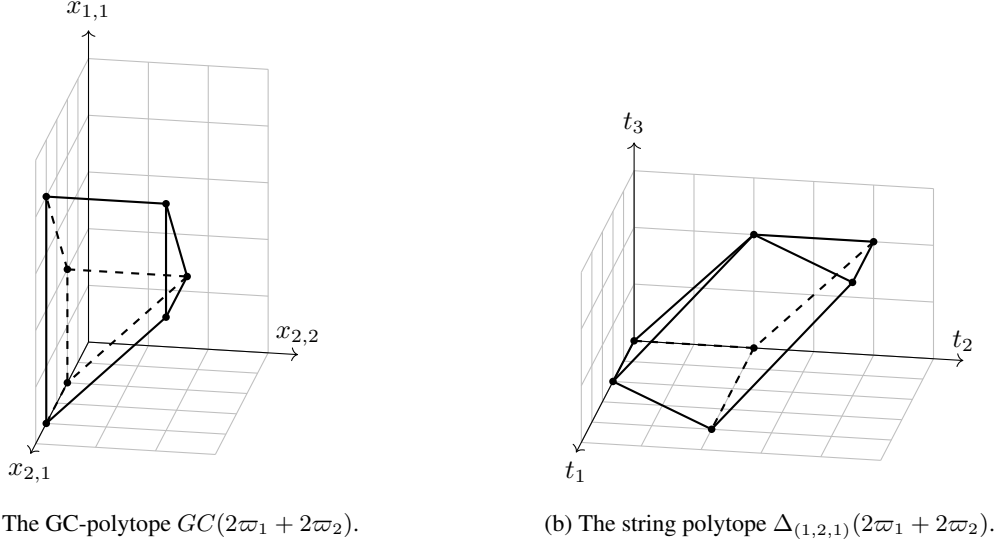

For the sequence $\sigma = (D,D,\dots,D) \in \{A,D\}^n$, we have that
\begin{equation}\label{eq_standard_word}
\mathbf i_0 :=  (\underbrace{E_{D}(0) \circ \cdots \circ E_D(0)}_{n})(\emptyset) = (1,2,1,3,2,1,\cdots,k,k-1,\dots,1,\dots,n,n-1,\dots,1).
\end{equation}
For such a word, Littelmann showed that the string polytope $\Delta_{\mathbf i_0}(\lambda)$ is unimodularly equivalent to the Gelfand--Cetlin polytope $GC(\lambda)$ for a  
dominant weight $\lambda$ in~\cite[\S5]{Li}. One can see the string polytope $\Delta_{(1,2,1)}(2\varpi_1 + 2\varpi_2)$ in Figure~\ref{fig_string_polytope}. 
On the other hand, one can see that this word defines the simplicial string cone $C_{\mathbf{i}_0}$ by Theorem~\ref{theorem_simplicial}.
In this section, we will show that, for a given reduced word $\mathbf i \in \Sigma_{n+1}$, if there exists a sequence $(\sigma_1,\dots,\sigma_n) \in \{A, D\}^n$ such that 
\[
\mathrm{ind}_{\sigma_k}\left( C_{\sigma_{k+1}} \circ \cdots \circ C_{\sigma_n} ({\bf i}) \right) = 0 
\]
for all $k = n,\dots, 1$,
then the string polytope $\Delta_{\mathbf i}(\lambda)$ is unimodularly equivalent to $GC(\lambda)$ for a dominant integral weight $\lambda$ (see Theorem~\ref{prop_i_flag_affine_equiv_GC}). 

The following lemma plays an important role to prove the unimodularity:
\begin{lemma}
\label{lemma_volumns_are_same}
	Let $\mathbf i$ be a reduced word of the longest element in $\mathfrak{S}_{n+1}$ and $\lambda$ be a dominant integral weight. Then we have that
	\[
	\textup{vol}(\Delta_{\mathbf{i}}(\lambda)) = \textup{vol}(GC(\lambda)).
	\]
	Here, $\textup{vol}$ is the Euclidean volume of a polytope. 
\end{lemma}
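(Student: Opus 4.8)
The plan is to prove the volume equality by a standard degeneration/counting argument: both polytopes $\Delta_{\mathbf i}(\lambda)$ and $GC(\lambda)$ arise as Newton--Okounkov bodies of the same flag variety $\SL_{n+1}(\C)/B$ with respect to the same line bundle $\mathcal L_\lambda$ determined by the dominant weight $\lambda$, merely for different choices of valuation coming from the reduced word $\mathbf i$ versus $\mathbf i_0$. Since the Euclidean volume of a Newton--Okounkov body $\Delta(R)$ of a graded algebra $R = \bigoplus_m H^0(X,\mathcal L^{\otimes m})$ depends only on the algebra $R$ (it equals $\tfrac{1}{N!}$ times the leading coefficient of the Hilbert polynomial $m \mapsto \dim H^0(X,\mathcal L^{\otimes m})$, i.e.\ $\tfrac{1}{N!}$ times the self-intersection number $\mathcal L_\lambda^{N}$), and not on the choice of valuation used to realize it, the two bodies have the same volume. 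I would cite the works referenced in the introduction (\cite{Kav15,FN17}) for the identification of $\Delta_{\mathbf i}(\lambda)$ with the Newton--Okounkov body of $(\SL_{n+1}(\C)/B,\mathcal L_\lambda)$ attached to the highest-term valuation from $\mathbf i$, and for the general fact that volume of a Newton--Okounkov body equals $(\operatorname{rank}\text{-}\dim)!^{-1}$ times the degree.

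An alternative, more elementary route that avoids Newton--Okounkov machinery is a direct lattice-point count: by Littelmann's theorem the lattice points of $m\Delta_{\mathbf i}(\lambda)$ are in bijection with a basis of the irreducible $\SL_{n+1}(\C)$-module of highest weight $m\lambda$, and likewise for $m\, GC(\lambda)$, so for every positive integer $m$ one has
\[
\#\bigl( m\Delta_{\mathbf i}(\lambda)\cap \Z^N \bigr) \;=\; \dim V_{m\lambda} \;=\; \#\bigl( m\, GC(\lambda)\cap \Z^N \bigr).
\]
Since both polytopes are integral (each is a lattice polytope: $GC(\lambda)$ obviously, and $\Delta_{\mathbf i}(\lambda)$ because its lattice points, as $m$ varies, grow like a polynomial of degree $N$ agreeing with $\dim V_{m\lambda}$; alternatively invoke the normality/integrality results of Littelmann and the structure obtained in this paper), the Ehrhart polynomial of each polytope exists, and these two Ehrhart polynomials agree at all positive integers, hence are equal. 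Comparing leading coefficients gives $\operatorname{vol}(\Delta_{\mathbf i}(\lambda)) = \operatorname{vol}(GC(\lambda))$, since the leading coefficient of the Ehrhart polynomial of a polytope $P\subset\R^N$ is exactly $\operatorname{vol}(P)$.

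I expect the main technical obstacle to be justifying that the leading Ehrhart coefficient computes the Euclidean volume, which requires the relevant polytopes to be (rational, and for the cleanest statement) lattice polytopes — and integrality of string polytopes is exactly the subtle point flagged in the introduction (it can fail for non-regular $\lambda$ in higher rank). However, for the volume statement one does not actually need integrality: it suffices that $\Delta_{\mathbf i}(\lambda)$ is a rational polytope whose dilate by a sufficiently divisible $k$ is a lattice polytope, and then $\operatorname{vol}(\Delta_{\mathbf i}(\lambda)) = k^{-N}\operatorname{vol}(k\Delta_{\mathbf i}(\lambda)) = k^{-N}\lim_{m\to\infty} m^{-N}\#(mk\Delta_{\mathbf i}(\lambda)\cap\Z^N) = \lim_{m\to\infty}(mk)^{-N}\dim V_{mk\lambda}$, and the same computation for $GC(\lambda)$ yields the identical limit, namely $\mathcal L_\lambda^N/N!$. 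So the cleanest write-up is: both volumes equal $\lim_{m\to\infty} m^{-N}\dim V_{m\lambda}$ via Littelmann's lattice-point description (Theorem~\ref{theorem_Li}) applied to both reduced words, and this common limit is finite since $\dim V_{m\lambda}$ is, by Weyl's dimension formula, a polynomial in $m$ of degree $N$.
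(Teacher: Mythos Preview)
Your primary approach via Newton--Okounkov bodies is exactly the paper's proof: it cites \cite[Corollary~4.2 and Theorem~1.15]{Kav15} to identify $\Delta_{\mathbf i}(\lambda)$ as a Newton--Okounkov body of $(\mathcal F\ell(\C^{n+1}),\mathcal L_\lambda)$ whose volume is determined by the degree of the embedding (hence independent of $\mathbf i$), and then uses Littelmann's identification $\Delta_{\mathbf i_0}(\lambda)\cong GC(\lambda)$ to conclude.

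Your alternative lattice-point argument is also correct and genuinely more elementary, but the middle paragraph should be dropped: the claim that $\Delta_{\mathbf i}(\lambda)$ is integral is false in general (the paper itself cites Steinert's counterexample for $\SL_6$), and polynomial growth of lattice-point counts does not force integrality. Your final paragraph already contains the right fix: since $m\Delta_{\mathbf i}(\lambda)=\Delta_{\mathbf i}(m\lambda)$ (the defining inequalities are homogeneous in $(\mathbf t,\lambda)$), Littelmann's parametrization gives $\#(m\Delta_{\mathbf i}(\lambda)\cap\Z^N)=\dim V_{m\lambda}$ for all $m$, and the elementary fact $\mathrm{vol}(P)=\lim_{m\to\infty}m^{-N}\#(mP\cap\Z^N)$ for any convex body $P$ (no rationality or integrality needed) finishes the argument. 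This route trades the Newton--Okounkov citation for the homogeneity observation plus a basic volume-as-limit fact; the paper's route is shorter to state but leans on heavier machinery.
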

\begin{proof}
	It is known from~\cite[Corollary 4.2]{Kav15} that the string polytope $\Delta_{\mathbf{i}}(\lambda)$ is a Newton--Okounkov body of the full flag manifold $\mathcal{F}\ell(\C^{n+1})$ with the line bundle $\mathcal{L}_{\lambda}$ determined by $\lambda$ and the valuation determined by $\mathbf{i}$. Since the Euclidean volume of a Newton--Okounkov body encodes the degree of the projective embedding (see~\cite[Theorem 1.15]{Kav15}), for any different choices  $\mathbf{i}$ and $\mathbf{i}'$ of reduced words, we obtain
	\[
	\text{vol}(\Delta_{\mathbf{i}}(\lambda)) = \text{vol}(\Delta_{\mathbf{i}'}(\lambda)). 
	\]
	On the other hand, it was proven in~\cite[\S5]{Li} that the string polytope $\Delta_{\mathbf{i}_0}(\lambda)$ is unimodularly equivalent to the Gelfand--Cetlin polytope $GC(\lambda)$ for the word $\mathbf{i}_0$ in~\eqref{eq_standard_word}. Hence we have that for any $\mathbf{i} \in \Sigma_{n+1}$, 
	\[
	\text{vol}(\Delta_{\mathbf{i}}(\lambda)) = \text{vol}(\Delta_{\mathbf{i}_0}(\lambda)) =  \text{vol}(GC(\lambda))
	\]
	which proves the lemma.
\end{proof}

Let $\mathbf i \in \Sigma_{n+1}$ be a word. Suppose that there exists a sequence $(\sigma_1,\dots,\sigma_n) \in \{A,D\}^n$ such that 
\[
\mathrm{ind}_{\sigma_k}\left( C_{\sigma_{k+1}} \circ \cdots \circ C_{\sigma_n} ({\bf i}) \right) = 0  \quad \text{ for all }k = n,\dots,1.
\] 
Define
$
 \mathbf{i}' := (E_{\sigma_n}(0) \circ \cdots \circ E_{\sigma_1}(0))(\emptyset).
$
Then $\mathbf i \sim \mathbf i'$ by~\eqref{equation_commutation}.
Now we explicitly construct an affine transformation $T \colon \mathbf{t} \mapsto M\mathbf{t} + \mathbf{v}$ which maps $\Delta_{\mathbf i'}(\lambda)$ into $GC(\lambda)$ where $M \in \textup{GL}(N, \mathbb{Z})$ and $\mathbf{v} \in \mathbb{Z}^N$ where $N = \frac{n(n+1)}{2}$.
Since string polytopes $\Delta_{\mathbf i}$ and $\Delta_{\mathbf i'}$ are unimodularly equivalent by Lemma~\ref{lemma_2_move}, by considering the composition of $T$ and an appropriate change of coordinates, we get an affine transformation which maps $\Delta_{\mathbf i}(\lambda)$ into $GC(\lambda)$.


We may consider the matrix $M$ consisting of block matrices $M_{k,l} \in M_{k \times l}(\Z)$, and the vector $\mathbf{v}$ consisting of vectors $\mathbf{v}_k \in \Z^k$:
\[
M = \begin{bmatrix}
M_{1,1} & M_{1,2} & \cdots & M_{1,n} \\
M_{2,1} & M_{2,2} & \cdots & M_{2,n} \\
\vdots & \vdots & \ddots & \vdots \\
M_{n,1} & M_{n,2}& \cdots & M_{n,n}
\end{bmatrix}, \quad
\mathbf{v} = \begin{bmatrix}
\mathbf{v}_1 \\ \mathbf{v}_{2} \\ \vdots \\ \mathbf{v}_n
\end{bmatrix}.
\]
Let $A_k$ be the $k \times k$ matrix such that the $(i,i)$-th entry is $1$ for $i = 1,\dots,k$ and the $(i,i+1)$-th entry is $-1$ for $i = 1,\dots,k-1$ and the others are zero. 
Also, let $D_k$ be the $k \times k$ matrix such that the $(i,k-i+1)$-entry is $-1$ for $i = 1,\dots,k$ and the $(i,k-i+2)$-entry is $1$ for $i = 2,\dots,k$ and the other entries are zero. 
For instance, we have
\[
A_3 = \begin{bmatrix}
1 & -1 & 0 \\ 0 & 1 & -1 \\ 0 & 0 & 1
\end{bmatrix},\quad D_3 = \begin{bmatrix}
0 & 0 & -1 \\ 0 & -1 & 1 \\ -1 & 1 & 0
\end{bmatrix}.
\]

We inductively define matrices $M_{k,l}$ and vectors $\mathbf{v}_k$ using the sequence $\sigma$ and matrices $A_k$ and $D_k$ on $k = n,n-1,\dots,1$. 
We first let $M_{n,1}, M_{n,2},\dots,M_{n,n-1}$ be zero matrices and $M_{n,n}$ be defined by
\[
M_{n,n} = \begin{cases}
A_n & \text{ if } \sigma_n = A, \\
D_n & \text{ if } \sigma_n = D.
\end{cases}
\]
We also define the vector $\mathbf{v}_n$ by
\[
\mathbf{v}_n = \begin{cases}
(\sum_{j=2}^n \lambda_j, \sum_{j=3}^n \lambda_j, \dots,\lambda_n, 0) \in \Z^n & \text{ if } \sigma_n = A, \\
(\sum_{j=1}^n \lambda_j, \sum_{j=2}^n \lambda_j, \dots, \lambda_{n-1}+\lambda_n, \lambda_n) \in \Z^n & \text{ if } \sigma_n = D.
\end{cases}
\]	
Now we inductively define the matrices $M_{k,1}, M_{k,2},\dots,M_{k,n}$ as follows:
\[
\begin{split}
&M_{k,k} = \begin{cases}
A_k & \text{ if } \sigma_k = A, \\
D_k & \text{ if } \sigma_k = D,
\end{cases}\\
&M_{k,l} = \begin{cases}
M_{k+1,l}^A & \text{ if } \sigma_k = A, \\
M_{k+1,l}^D & \text{ if } \sigma_k = D,
\end{cases} \quad \text{ for } l \neq k.
\end{split}
\]
Here, $M_{k+1,l}^A$ (respectively, $M_{k+1,l}^D$) is the $k \times l$ matrix given by deleting the first row (respectively, the last row) of the matrix $M_{k+1,l}$. Similarly, we define $\mathbf{v}_k$ to be
\[
\mathbf{v}_k = \begin{cases}
\mathbf{v}_{k+1}^A & \text{ if } \sigma_k = A, \\
\mathbf{v}_{k+1}^D & \text{ if } \sigma_k = D
\end{cases}
\]
where $\mathbf{v}_{k+1}^A$ (respectively, $\mathbf{v}_{k+1}^D$) is the integer vector in $\Z^{k}$ given by deleting the first entry (respectively, the last entry) of the vector $\mathbf{v}_{k+1}$. 

\begin{example}\label{example_232123}
	Let $\mathbf i = (2,3,2,1,2,3) = (E_A(0) \circ E_D(0) \circ E_D(0))(\emptyset)$, i.e., $\mathbf i$ corresponds to the sequence $\sigma = (D, D, A) \in \{A,D\}^3$. Hence we have that
	\[
	\begin{split}
	& [ M_{3,1} \ M_{3,2} \ M_{3,3} ] = [ O \ O \ A_3] = \begin{bmatrix}
	0 & 0 & 0 & 1 & -1 & 0 \\  0 & 0 & 0 & 0 & 1 & -1 \\ 0 & 0 & 0 & 0 & 0 & 1
	\end{bmatrix}, \\
	& [M_{2,1} \ M_{2,2} \ M_{2,3}] = [O \ D_2 \ M_{3,3}^D] 
	= \begin{bmatrix}
	0 & 0 & -1 & 1 & - 1 & 0 \\
	0 & -1 & 1 & 0 & 1 & -1
	\end{bmatrix}, \\
	& [M_{1,1} \ M_{1,2} \ M_{1,3}] = [D_1 \ M_{2,2}^D \ M_{2,3}^D] = 
	\begin{bmatrix}
	-1 & 0 & -1 & 1 & -1 & 0
	\end{bmatrix},
	\end{split}
	\]
	and
	\[
	\mathbf{v}_3 = \begin{bmatrix}
	\lambda_2 + \lambda_3  \\ \lambda_3 \\ 0
	\end{bmatrix},\quad
	\mathbf{v}_2 = \begin{bmatrix}
	\lambda_2 + \lambda_3 \\ \lambda_3
	\end{bmatrix},\quad
	\mathbf{v}_1 = \begin{bmatrix}
	\lambda_2 + \lambda_3
	\end{bmatrix}.
	\]
\end{example}

Now we will state the following theorem of which proof will be given in Section~\ref{subsec_proof_of_i_flag_affine_equiv_GC}:
\begin{theorem}\label{prop_i_flag_affine_equiv_GC}
	Let $\mathbf i \in \Sigma_{n+1}$.
	If there exists a sequence $(\sigma_1,\dots,\sigma_n) \in \{A,D\}^n$ such that 
\begin{equation}\label{eq_ind_of_i}
\mathrm{ind}_{\sigma_k}\left( C_{\sigma_{k+1}} \circ \cdots \circ C_{\sigma_n} ({\bf i}) \right) = 0  \quad \text{ for all }k = n,\dots,1,
\end{equation}
	then the corresponding string polytope $\Delta_{\mathbf i}(\lambda)$
	is unimodularly equivalent to the Gelfand--Cetlin polytope $GC(\lambda)$ for any dominant integral weight $\lambda$. Indeed, the string polytope $\Delta_{\mathbf i}(\lambda)$ maps to the Gelfand--Cetlin polytope $GC(\lambda)$ via the composition of the affine transformation $\mathbf{t} \mapsto M \mathbf{t} + \mathbf{v}$ and some coordinate changes.
\end{theorem}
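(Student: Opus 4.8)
The plan is to prove the theorem by induction on $n$, with the inductive step governed by the last letter $\sigma_n$ of the sequence $\sigma = (\sigma_1, \dots, \sigma_n)$. Since the string polytope $\Delta_{\mathbf i}(\lambda)$ only depends on $\mathbf i$ up to $2$-moves (Lemma~\ref{lemma_2_move}), we may replace $\mathbf i$ by $\mathbf i' = (E_{\sigma_n}(0)\circ\cdots\circ E_{\sigma_1}(0))(\emptyset)$ and work with this canonical word instead. Write $\mathbf j := (E_{\sigma_{n-1}}(0)\circ\cdots\circ E_{\sigma_1}(0))(\emptyset) \in \Sigma_n$, so that $\mathbf i' = E_{\sigma_n}(0)(\mathbf j)$ and $\mathbf j$ satisfies the analogue of~\eqref{eq_ind_of_i} with the sequence $(\sigma_1,\dots,\sigma_{n-1})$. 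By the inductive hypothesis, for the dominant weight $\lambda^{(n-1)} := \lambda_2\varpi_1 + \cdots + \lambda_n\varpi_{n-1}$ of $\mathfrak{sl}_n(\C)$ (if $\sigma_n = A$) or $\lambda^{(n-1)} := \lambda_1\varpi_1 + \cdots + \lambda_{n-1}\varpi_{n-1}$ (if $\sigma_n = D$), the affine transformation $\mathbf t \mapsto M' \mathbf t + \mathbf v'$ built from $(\sigma_1,\dots,\sigma_{n-1})$ carries $\Delta_{\mathbf j}(\lambda^{(n-1)})$ onto $GC_{n-1}(\lambda^{(n-1)})$. Note $M'$ is precisely the matrix whose rows/columns are obtained from the $M_{k,l}$ by the deletion rules, so the combinatorial bookkeeping in the statement is exactly designed to propagate this.

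The heart of the argument is then to understand how the extension $E_{\sigma_n}(0)$ acts on the defining inequalities of the string polytope. By Theorem~\ref{theorem_simplicial}, since $\mathrm{ind}_{\sigma_k}(\cdots) = 0$ for all $k$, the string cone $C_{\mathbf i'}$ is the product cone~\eqref{eq_string_cone_r}; in particular the block corresponding to the top row ($t_{N-n+1} \geq \cdots \geq t_N \geq 0$) comes exactly from the $n$ many $\sigma_n$-canonical paths, which by Lemma~\ref{lemma_canoncial_paths_index_zero} have the simple wire-expression $\ell_k \to \ell_{\bullet} \to \ell_{k+1}$. I would analyze the $\lambda$-inequalities~\eqref{equation_lambdacone} for $\mathbf i'$ in the chamber-variable form~\eqref{equation_change_coordinate}, separating the nodes lying on $\ell_\bullet$ (the "new" row) from the rest. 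The new nodes, together with the canonical string inequalities, should cut out, after the linear change $M_{n,n} = A_n$ or $D_n$ and the shift $\mathbf v_n$, exactly the chain of inequalities $x_{n+1,j} \geq x_{n,j} \geq x_{n+1,j+1}$ forming the top two rows of the GC pattern~\eqref{eq_GC_polytope}, where $x_{n+1,j}$ are the prescribed constants. The remaining inequalities — both string and $\lambda$ — should, via Corollary~\ref{corollary_oldpath} and the identification $\mathrm{Im}\,\widetilde\Psi_\bullet(\mathbf i')$, match term-by-term (in chamber variables) with the inequalities defining $\Delta_{\mathbf j}(\lambda^{(n-1)})$, which is where the off-diagonal blocks $M_{k,l} = M_{k+1,l}^{A}$ or $M_{k+1,l}^{D}$ and the truncated vectors $\mathbf v_k$ enter.

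The main obstacle will be the precise matching in the last step: showing that under the block transformation $M$ the string and $\lambda$-inequalities of $\mathbf i'$ indexed by "old" paths/nodes pull back to the GC-inequalities indexed by the rows $k \leq n-1$. This requires tracking how a chamber variable $u_j$ of $G(\mathbf i')$ restricts to a chamber variable of $G(\mathbf j)$ under removal of the wire $\ell_\bullet$ (Remark~\ref{remark_well_define}), and checking that the linear relations are preserved. I expect one clean way to finish is to verify \emph{injectivity} of the constructed affine map $T\colon \mathbf t \mapsto M\mathbf t + \mathbf v$ on $\R^N$ (immediate: $M$ is block lower-triangular with diagonal blocks $A_k$ or $D_k$, all of determinant $\pm 1$, so $M \in \mathrm{GL}(N,\Z)$) together with the inclusion $T(\Delta_{\mathbf i'}(\lambda)) \subseteq GC(\lambda)$, and then invoke Lemma~\ref{lemma_volumns_are_same}: since $T$ is unimodular it preserves Euclidean volume, and $\mathrm{vol}(\Delta_{\mathbf i'}(\lambda)) = \mathrm{vol}(GC(\lambda))$, so the inclusion of equal-volume convex bodies forces $T(\Delta_{\mathbf i'}(\lambda)) = GC(\lambda)$. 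This volume trick lets us get away with proving only one containment of polytopes, which should follow by a direct if somewhat tedious check that each GC-inequality is a nonnegative consequence of the string and $\lambda$-inequalities after applying $T$.
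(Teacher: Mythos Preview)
Your final strategy---show $T(\Delta_{\mathbf i'}(\lambda)) \subseteq GC(\lambda)$ directly and then invoke the equal-volume Lemma~\ref{lemma_volumns_are_same} to upgrade inclusion to equality---is exactly what the paper does, and your observation that $M$ is block-triangular with diagonal blocks $A_k, D_k$ of determinant $\pm 1$ is the right reason $T$ is unimodular.

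Where you diverge from the paper is in the organization of the ``tedious check.'' Your induction on $n$ via a fixed auxiliary weight $\lambda^{(n-1)}$ does not quite work as stated: the old $\lambda$-inequalities of $\mathbf i'$ genuinely involve the new variables $t_{N-n+1},\dots,t_N$ (e.g.\ for $\mathbf i'=(1,2,1)$ the old inequality is $t_1 \leq \lambda_1 + t_2 - 2t_3$, not $t_1 \leq \lambda_1$), so they do not match $\Delta_{\mathbf j}(\lambda^{(n-1)})$ for any single $\lambda^{(n-1)}$. On the GC side, fixing the top row $(x_{n,j})$ leaves a $GC_{n-1}$ polytope whose ``weight'' is the variable row itself, so an honest induction on $n$ would need the hypothesis for a real-parameter family of weights, not one fixed $\lambda^{(n-1)}$. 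The paper sidesteps this by abandoning induction on $n$ and instead proving three recursive lemmas indexed by the row $k$ (within fixed $n$): one expressing $\lambda_{k,j}:=\mathbf v_k(j)-\mathbf v_k(j+1)$ as $\lambda_{i_{k,\ast}}$, one giving $s_{k,j}$ in terms of $s_{k+1,\ast}$ and $t$-variables, and one giving $x_{k,j}-x_{k,j+1}$ in terms of $t_{k,\ast}$, $s_{k,\ast}$ and $\lambda_{k+1,\ast}$. Each lemma is a four-case analysis on $(\sigma_k,\sigma_{k+1})$, and combining them shows each GC inequality $x_{k+1,j}\geq x_{k,j}$ or $x_{k,j}\geq x_{k+1,j+1}$ is literally one of the string or $\lambda$-inequalities. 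This is precisely the ``direct if somewhat tedious check'' you anticipate at the end; once you do it, your inductive scaffolding on $n$ becomes unnecessary.
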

\begin{remark}
	When a given word $\mathbf i \in \Sigma_{n+1}$ satisfies the condition on Theorem~\ref{prop_i_flag_affine_equiv_GC}, there exist two different sequences $\sigma \in \{A,D\}^n$ satisfying the condition~\eqref{eq_ind_of_i}. 
	Indeed, $(1) = (E_A(0))(\emptyset) = (E_D(0))(\emptyset)$. Hence if $(A,\sigma_2,\dots,\sigma_n)$ satisfies the condition~\eqref{eq_ind_of_i}, then $(D,\sigma_2,\dots,\sigma_n)$ also satisfies~\eqref{eq_ind_of_i}, and vice versa. 
	Even though such different choices of sequences in $\{A,D\}^n$ define different affine transformations, each of affine transformations satisfies the desired property as one can see in the proof of Theorem~\ref{prop_i_flag_affine_equiv_GC}.
\end{remark}
For a parabolic subgroup $P \subset G$, we define a weight $\lambda_P$ to be the weight corresponding to the anticanonical bundle of a partial flag variety $G/P$. For example, 
we have $\lambda_{B} = 2 \varpi_1 + \cdots + 2 \varpi_n$ for a Borel subgroup $B \subset \SL_{n+1}(\C)$. 
It has been known from \cite[Theorem 7]{Ru} and \cite[\S 4]{Ste19} that $\Delta_{\bf i}(\lambda_P)$ contains exactly one lattice point $\iota$ in its interior and the dual polytope $(\Delta_{\bf i}(\lambda_P)-\iota)^*$ is integral for  any ${\bf i} \in \Sigma_{n+1}$. 
Hence together with Theorem~\ref{prop_i_flag_affine_equiv_GC}, we get the following corollary:
\begin{corollary}
	Let $\mathbf i \in \Sigma_{n+1}$.
	If there exists a sequence $(\sigma_1,\dots,\sigma_n) \in \{A,D\}^n$ such that 
\[
\mathrm{ind}_{\sigma_k}\left( C_{\sigma_{k+1}} \circ \cdots \circ C_{\sigma_n} ({\bf i}) \right) = 0  \quad \text{ for all }k = n,\dots,1,
\]
	then the corresponding string polytope $\Delta_{\mathbf i}(\lambda)$ is an integral polytope, i.e., its vertices are contained in $\Z^N$ for any dominant integral weight $\lambda$.
	Moreover, the string polytope $\Delta_{\bf i}(\lambda_P)$ for a partial flag variety $G/P$ is reflexive (after translation by a lattice vector), i.e., the dual $(\Delta_{\bf i}(\lambda_P)-\iota)^{\ast}$ is also an integral polytope. 
\end{corollary}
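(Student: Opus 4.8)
The plan is to derive the Corollary purely formally from Theorem~\ref{prop_i_flag_affine_equiv_GC} and the facts about $\Delta_{\bf i}(\lambda_P)$ recalled just above its statement; no new geometric input is needed. For the integrality assertion I would first invoke Theorem~\ref{prop_i_flag_affine_equiv_GC}: under the stated hypothesis on $\mathbf i$ it produces a unimodular transformation $T$ — the composite of $\mathbf{t}\mapsto M\mathbf{t}+\mathbf{v}$ with $M\in\textup{GL}(N,\Z)$ and $\mathbf{v}\in\Z^N$, together with the coordinate permutations of Lemma~\ref{lemma_2_move} — carrying $\Delta_{\mathbf i}(\lambda)$ onto $GC(\lambda)$ for every dominant integral $\lambda$. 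It then suffices to recall that $GC(\lambda)$ is itself a lattice polytope: it is the marked order polytope of the interlacing poset displayed in~\eqref{eq_GC_polytope} with integer markings $x_{n+1,j}=\lambda_j+\cdots+\lambda_n$, and a marked order polytope with integral markings has integral vertices. Since the inverse of a unimodular transformation is again unimodular and restricts to a bijection of $\Z^N$, the polytope $\Delta_{\mathbf i}(\lambda)=T^{-1}(GC(\lambda))$ has integral vertices as well, which is the first assertion.

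For the reflexivity assertion I would note that the weight $\lambda_P$ of the anticanonical bundle of $G/P$ is a dominant integral weight — for $P=B$ it equals $2\varpi_1+\cdots+2\varpi_n$ — so the integrality statement just proved applies to $\lambda=\lambda_P$ and $\Delta_{\bf i}(\lambda_P)$ is a lattice polytope. Combining this with the input recalled above, namely that by~\cite[Theorem~7]{Ru} and~\cite[\S4]{Ste19} the polytope $\Delta_{\bf i}(\lambda_P)$ contains a unique interior lattice point $\iota$ and the polar dual $(\Delta_{\bf i}(\lambda_P)-\iota)^{\ast}$ is a lattice polytope, I conclude that $\Delta_{\bf i}(\lambda_P)-\iota$ is a lattice polytope having the origin as its only interior lattice point and with lattice-polytope polar dual; this is precisely the definition of a reflexive polytope, so $\Delta_{\bf i}(\lambda_P)$ is reflexive after translation by $\iota$.

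I do not expect a genuine obstacle here, since Theorem~\ref{prop_i_flag_affine_equiv_GC} and the reflexivity facts of~\cite{Ru,Ste19} are already at hand; the only points deserving a sentence of care are that every map involved lies in the affine unimodular group $\Z^N\rtimes\textup{GL}(N,\Z)$, so that integrality of vertices, the number of interior lattice points, and integrality of the polar dual are preserved simultaneously, and that $\lambda_P$ satisfies the hypothesis ``dominant integral'' of Theorem~\ref{prop_i_flag_affine_equiv_GC} (which does not require regularity). Alternatively, one could obtain the reflexivity directly from the unimodular equivalence $\Delta_{\bf i}(\lambda_P)\cong GC(\lambda_P)$ together with the known reflexivity of $GC(\lambda_P)$, bypassing~\cite{Ru,Ste19} entirely.
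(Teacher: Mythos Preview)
Your proposal is correct and follows exactly the approach the paper intends: the corollary is stated as an immediate consequence of Theorem~\ref{prop_i_flag_affine_equiv_GC} together with the Rusinko/Steinert facts recalled just before it, and your argument simply unpacks this by noting that the unimodular equivalence to $GC(\lambda)$ transfers the integrality of the Gelfand--Cetlin polytope, and then combines this integrality at $\lambda=\lambda_P$ with the cited dual-integrality result. Your added remarks about why the relevant maps lie in $\Z^N\rtimes\textup{GL}(N,\Z)$ and why $\lambda_P$ is dominant integral are appropriate points of care, and your alternative route via the reflexivity of $GC(\lambda_P)$ is a valid shortcut the paper does not take.
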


To sum up, we may summarize our main results as follows. 

\begin{theorem}\label{thm_GC_type_string_polytope}
	Let $\mathbf i$ be a reduced word of the longest element in the Weyl group of $\SL_{n+1}(\C)$, and let $\lambda$ be a regular dominant integral weight. Then the following are equivalent:
	\begin{enumerate}
		\item The string polytope $\Delta_{\bf i}(\lambda)$ is unimodularly equivalent to the Gelfand--Cetlin polytope $GC(\lambda)$. 
		\item The string polytope $\Delta_{\bf i}(\lambda)$ has exactly $n(n+1)$ many facets.
		\item The associated string cone $C_{\bf i}$ is simplicial.
		\item There exists a sequence $(\sigma_1,\dots,\sigma_n) \in \{A,D\}^n$ such that 
\[
\mathrm{ind}_{\sigma_k}\left( C_{\sigma_{k+1}} \circ \cdots \circ C_{\sigma_n} ({\bf i}) \right) = 0  \quad \text{ for all }k = n,\dots,1.
\]
	\end{enumerate}
\end{theorem}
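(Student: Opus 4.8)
The plan is to obtain Theorem~\ref{thm_GC_type_string_polytope} by assembling the results of Sections~\ref{secNonRedundancyOfStringInequalities}--\ref{secGelfandCetlinTypeStringPolytopes}, establishing the links $(2)\Leftrightarrow(3)$, $(3)\Leftrightarrow(4)$, $(4)\Rightarrow(1)$, and $(1)\Rightarrow(2)$. The one genuinely new ingredient is a facet count. By Proposition~\ref{proposition_stringpolytopenr}, for a regular dominant $\lambda$ the string inequalities~\eqref{equation_stringcone} together with the $\lambda$-inequalities~\eqref{equation_lambdacone} form a non-redundant presentation of $\Delta_{\bf i}(\lambda)$, so the number of facets of $\Delta_{\bf i}(\lambda)$ is exactly $|\mathcal{GP}({\bf i})| + N = \lVert{\bf i}\rVert + N$, where $N = n(n+1)/2$. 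Likewise, by Proposition~\ref{proposition_stringconenr}, $C_{\bf i}$ has exactly $\lVert{\bf i}\rVert$ facets, and since $C_{\bf i}$ is a full-dimensional pointed cone (its apex being the origin), $C_{\bf i}$ is simplicial if and only if $\lVert{\bf i}\rVert = N$. Combining these, $\Delta_{\bf i}(\lambda)$ has precisely $n(n+1) = 2N$ facets if and only if $\lVert{\bf i}\rVert = N$ if and only if $C_{\bf i}$ is simplicial; this is $(2)\Leftrightarrow(3)$.

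The equivalence $(3)\Leftrightarrow(4)$ is the equivalence of conditions (1) and (2) in Theorem~\ref{theorem_simplicial}, and $(4)\Rightarrow(1)$ is Theorem~\ref{prop_i_flag_affine_equiv_GC}. It remains to prove $(1)\Rightarrow(2)$, which I would carry out through the standard word. Writing ${\bf i}_0 = (E_D(0)\circ\cdots\circ E_D(0))(\emptyset)$ as in~\eqref{eq_standard_word} and using the identity $(C_\bullet,\mathrm{ind}_\bullet)\circ E_\bullet = \mathrm{id}$ from Section~\ref{ssecGeneratingReducedWords}, a short induction shows $C_D^{\,j}({\bf i}_0) = (E_D(0))^{\,n-j}(\emptyset)$ and $\mathrm{ind}_D\big(C_D^{\,j}({\bf i}_0)\big) = 0$ for $0 \le j \le n-1$; hence ${\bf i}_0$ satisfies condition~(4) with $\sigma = (D,\dots,D)$. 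Applying the already-established implications $(4)\Rightarrow(3)\Rightarrow(2)$ to ${\bf i}_0$, we find that $\Delta_{{\bf i}_0}(\lambda)$ has exactly $n(n+1)$ facets; since $\Delta_{{\bf i}_0}(\lambda)$ is unimodularly equivalent to $GC(\lambda)$ by Littelmann~\cite[\S5]{Li}, and a unimodular equivalence is an affine isomorphism preserving the face lattice, $GC(\lambda)$ has exactly $n(n+1)$ facets. Therefore, if $\Delta_{\bf i}(\lambda)$ is unimodularly equivalent to $GC(\lambda)$, then it has $n(n+1)$ facets, giving $(1)\Rightarrow(2)$ and closing the cycle.

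Almost all of the substance has been taken care of in the preceding sections --- the non-redundancy of the defining inequalities (Section~\ref{secNonRedundancyOfStringInequalities}), the combinatorial classification of simplicial string cones (Theorem~\ref{theorem_simplicial}), and the explicit unimodular transformation onto $GC(\lambda)$ (Theorem~\ref{prop_i_flag_affine_equiv_GC}) --- so the delicate part here is purely bookkeeping, with two points to watch. First, one must confirm that the $N$ many $\lambda$-inequalities always contribute exactly $N$ facets, independently of~${\bf i}$, so that the facet count of $\Delta_{\bf i}(\lambda)$ is uniformly $\lVert{\bf i}\rVert + N$ and all the variation across reduced words is carried by the string part. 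Second, one must make sure the facet count $n(n+1)$ of $GC(\lambda)$ is genuinely derived --- here, from ${\bf i}_0$ satisfying condition~(4) --- rather than implicitly assumed, so that no circularity slips into the argument. Alternatively, one may simply cite the classical fact that all $n(n+1)$ defining inequalities of $GC(\lambda)$ are facet-inducing for a regular dominant weight, but routing the count through ${\bf i}_0$ keeps the proof internal to the present paper.
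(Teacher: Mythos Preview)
Your proposal is correct and follows essentially the same logical structure as the paper's own proof: the paper also cites Theorem~\ref{theorem_simplicial} for $(3)\Leftrightarrow(4)$, Theorem~\ref{prop_i_flag_affine_equiv_GC} for $(4)\Rightarrow(1)$, and the non-redundancy results of Section~\ref{secNonRedundancyOfStringInequalities} for $(2)\Rightarrow(3)$. The one presentational difference is that the paper simply asserts that $GC(\lambda)$ has $n(n+1)$ facets to obtain $(1)\Rightarrow(2)$, whereas you derive this internally by routing through ${\bf i}_0$ and the already-established implications $(4)\Rightarrow(3)\Rightarrow(2)$; your version is slightly more self-contained, but the substance is identical.
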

\begin{proof}
	We already proved $(3) \iff (4)$ in Theorem~\ref{theorem_simplicial}. 
	Also, by Theorem~\ref{prop_i_flag_affine_equiv_GC}, we have $(4) \implies (1)$. 	
	Since the number of facets of the Gelfand--Cetlin polytope $GC(\lambda)$ is $n(n+1)$, we have $(1) \implies (2)$. Finally, since the defining inequalities in Theorem~\ref{theorem_Li} has no redundancy (see~Section~\ref{secNonRedundancyOfStringInequalities}), if the string polytope $\Delta_{\mathbf{i}}(\lambda)$ has exactly $n(n+1)$ many facets, then the string cone $C_{\mathbf{i}}$ has $n(n+1)/2$ many facets, i.e., it is simplicial. This proves $(2) \implies (3)$, which completes the proof.
\end{proof}

We would like to enclose this section presenting several questions which arise naturally.
Using a computer program, such as SAGE, one can check that string polytopes $\Delta_{\bf i}(\lambda_P)$ 
is integral for any parabolic subgroup $P \subset \SL_{n+1}(\C)$ when $n \leq 4$. However, when $n = 5$, Steinert~\cite[Example 7.5]{Ste19} provided the string polytope $\Delta_{\mathbf i}(\varpi_3)$ which is not integral for $\mathbf i = (1,3,2,1,3,2,4,3,2,1,5,4,3,2,1)$. But one can check that the string polytope $\Delta_{\mathbf i}(2\varpi_3)$ is integral using SAGE. Moreover, the string polytope $\Delta_{\mathbf i }(6\varpi_3)$ is integral and $6\varpi_3 = \lambda_P$, where $P$ is the maximal parabolic subgroup of $\SL_6(\C)$ satisfying that $\SL_6(\C)/P = \Gr(3,6)$. 
On the other hand, one can also check 
that the string polytope $\Delta_{(1,3,2,1,3,2,4,3,2,1,5,4,3,2,1,6,5,4,3,2,1)}(7\varpi_3)$ is not an integral polytope, where $7 \varpi_3$ is the weight corresponding to the anticanonical bundle of $\SL_7(\C)/P_3= \Gr(3,7)$. 
Hence we present the following question:
\begin{question_num}
	Can we find conditions on $\mathbf i \in \Sigma_{n+1}$ and $P \subset \SL_{n+1}(\C)$
	such that the string polytope $\Delta_{\bf i}(\lambda_P) $ is integral?
\end{question_num}


\section{Proof of Theorem~\ref{prop_i_flag_affine_equiv_GC}}
\label{subsec_proof_of_i_flag_affine_equiv_GC}

In this section, we give the proof of Theorem~\ref{prop_i_flag_affine_equiv_GC}. 
Given a reduced word $\mathbf i \in \Sigma_{n+1}$, suppose that there is a sequence $(\sigma_1,\dots,\sigma_n) \in \{A,D\}^n$  such that 
\[
\mathrm{ind}_{\sigma_k}\left( C_{\sigma_{k+1}} \circ \cdots \circ C_{\sigma_n} ({\bf i}) \right) = 0  \quad \text{ for all }k = n,\dots,1.
\]
Define
\[
\mathbf{i}' := (E_{\sigma_n}(0) \circ \cdots \circ E_{\sigma_1}(0))(\emptyset) \quad \text{so that $\mathbf i \sim \mathbf i'$ by~\eqref{equation_commutation}.}
\]
Since string polytopes $\Delta_{\mathbf i}$ and $\Delta_{\mathbf i'}$ are unimodularly equivalent by Lemma~\ref{lemma_2_move},
we may assume that $\mathbf i = \mathbf i'$. 

Before giving the proof, we need to change the coordinates for notational simplicity. Let
\[
(\tkj{1}{1}, \tkj{2}{1}, \tkj{2}{2},\dots,\tkj{n}{1},\dots,\tkj{n}{n}) \in \R^1 \times \R^2 \times \cdots \times \R^n = \R^N
\]
be the coordinate for the string polytope $\Delta_{\mathbf{i}}(\lambda)$, and let
\[
\mathbf i =: (i_{1,1}, i_{2,1},i_{2,2},\dots,i_{n,1},\dots,i_{n,n}). 
\]
Also we denote the defining function of the $\lambda$-inequality associated to $\tkj{k}{j}$ by $\skj{k}{j}$ (see Definition~\ref{definition_lambda_inequality}). Indeed, $\skj{k}{j}$ is a linear functional on the variables $(\tkj{k}{j})_{k,j}$ whose leading term appears on the variable $\tkj{k}{j}$ such that the $\lambda$-inequality is given by
\begin{equation}\label{eq_string_skj}
\skj{k}{j} + \lambda_{i_{k,j}}\geq 0.
\end{equation}
Here $\lambda_{i_{k,j}} = \langle \lambda, \alpha_{i_{k,j}}^{\vee} \rangle$. 
\begin{example}
	Suppose that $\mathbf i = (1,2,1) \in \Sigma_3$.
	Let $\lambda = \lambda_1 \varpi_1 + \lambda_2 \varpi_2$ be a 
dominant weight. Using coordinates $\tkj{k}{j}$, the string polytope $\Delta_{\mathbf i}(\lambda)$ is defined by the following inequalities:
	\[
	\begin{split}
	\tkj{1}{1} \geq 0;& \\
	\tkj{2}{1} \geq \tkj{2}{2} \geq 0;& \\
	\skj{1}{1} +\lambda_1 = -\tkj{1}{1} + \tkj{2}{1} - 2 \tkj{2}{2} + \lambda_1 \geq 0;& \\
	\skj{2}{1} +\lambda_2 = -\tkj{2}{1} + \tkj{2}{2} + \lambda_2 \geq 0;& \\
	\skj{2}{2} +\lambda_1 = -\tkj{2}{2} + \lambda_1 \geq 0.&
		\end{split}
	\]
\end{example}

For $k = 1,\dots,n-1$, we observe the relation on values $i_{k,j}$ and $i_{k+1,j}$. There are four cases on the pair~$(\sigma_k, \sigma_{k+1})$.
\begin{enumerate}
	\item[\textbf{\underline{Case 1.}}] $(\sigma_k, \sigma_{k+1}) = (A,A)$. 
	\begin{equation}\label{eq_i_kj_AA}
	\begin{tikzcd}[column sep = 0.2cm, row sep = 0.2cm]
	i_{k+1,1} \arrow[r,  "\textcolor{black}{{<}}" description,sloped,	color=white] 
	& i_{k+1,2} \arrow[r,  "\textcolor{black}{{<}}" description,sloped,	color=white] \arrow[d, equal] 
	& \cdots \arrow[r,  "\textcolor{black}{{<}}" description,sloped,	color=white] 
	& i_{k+1,j+1} \arrow[r,  "\textcolor{black}{{<}}" description,sloped,	color=white] \arrow[d, equal]
	& \cdots \arrow[r,  "\textcolor{black}{{<}}" description,sloped,	color=white] 
	& i_{k+1,k} \arrow[r,  "\textcolor{black}{{<}}" description,sloped,	color=white] \arrow[d, equal]
	& i_{k+1,k+1} \arrow[d, equal]\\
	& i_{k,1} \arrow[r,  "\textcolor{black}{{<}}" description,sloped,	color=white] & \cdots \arrow[r,  "\textcolor{black}{{<}}" description,sloped,	color=white] & i_{k,j} \arrow[r,  "\textcolor{black}{{<}}" description,sloped,	color=white] & \cdots \arrow[r,  "\textcolor{black}{{<}}" description,sloped,	color=white] & i_{k,k-1} \arrow[r,  "\textcolor{black}{{<}}" description,sloped,	color=white] & i_{k,k}
	\end{tikzcd}
	\end{equation}
	\item[\textbf{\underline{Case 2.}}] $(\sigma_k, \sigma_{k+1}) = (D,A)$.
	\begin{equation}\label{eq_i_kj_DA}
	\begin{tikzcd}[column sep = 0.2cm, row sep = 0.2cm]
	i_{k+1,1} \arrow[r,  "\textcolor{black}{{<}}" description,sloped,	color=white] 
	& i_{k+1,2} \arrow[r,  "\textcolor{black}{{<}}" description,sloped,	color=white] \arrow[d, equal] 
	& \cdots \arrow[r,  "\textcolor{black}{{<}}" description,sloped,	color=white] 
	& i_{k+1,k-j+2} \arrow[r,  "\textcolor{black}{{<}}" description,sloped,	color=white] \arrow[d, equal]
	& \cdots \arrow[r,  "\textcolor{black}{{<}}" description,sloped,	color=white] 
	& i_{k+1,k} \arrow[r,  "\textcolor{black}{{<}}" description,sloped,	color=white] \arrow[d, equal]
	& i_{k+1,k+1} \arrow[d, equal]\\
	& i_{k,k} \arrow[r,  "\textcolor{black}{{<}}" description,sloped,	color=white] & \cdots \arrow[r,  "\textcolor{black}{{<}}" description,sloped,	color=white] & i_{k,j} \arrow[r,  "\textcolor{black}{{<}}" description,sloped,	color=white] & \cdots \arrow[r,  "\textcolor{black}{{<}}" description,sloped,	color=white] & i_{k,2} \arrow[r,  "\textcolor{black}{{<}}" description,sloped,	color=white] & i_{k,1}
	\end{tikzcd}
	\end{equation}
	\item[\textbf{\underline{Case 3.}}] $(\sigma_k, \sigma_{k+1}) = (A,D)$. 
	\begin{equation}\label{eq_i_kj_AD}
	\begin{tikzcd}[column sep = 0.2cm, row sep = 0.2cm]
	i_{k+1,1} \arrow[r,  "\textcolor{black}{{>}}" description,sloped,	color=white] 
	& i_{k+1,2} \arrow[r,  "\textcolor{black}{{>}}" description,sloped,	color=white] \arrow[d, equal] 
	& \cdots \arrow[r,  "\textcolor{black}{{>}}" description,sloped,	color=white] 
	& i_{k+1,k-j+2} \arrow[r,  "\textcolor{black}{{>}}" description,sloped,	color=white] \arrow[d, equal]
	& \cdots \arrow[r,  "\textcolor{black}{{>}}" description,sloped,	color=white] 
	& i_{k+1,k} \arrow[r,  "\textcolor{black}{{>}}" description,sloped,	color=white] \arrow[d, equal]
	& i_{k+1,k+1} \arrow[d, equal]\\
	& i_{k,k} \arrow[r,  "\textcolor{black}{{>}}" description,sloped,	color=white] & \cdots \arrow[r,  "\textcolor{black}{{>}}" description,sloped,	color=white] & i_{k,j} \arrow[r,  "\textcolor{black}{{>}}" description,sloped,	color=white] & \cdots \arrow[r,  "\textcolor{black}{{>}}" description,sloped,	color=white] & i_{k,2} \arrow[r,  "\textcolor{black}{{>}}" description,sloped,	color=white] & i_{k,1}
	\end{tikzcd}
	\end{equation}
	\item[\textbf{\underline{Case 4.}}] $(\sigma_k, \sigma_{k+1}) = (D,D)$. 
	\begin{equation}\label{eq_i_kj_DD}
	\begin{tikzcd}[column sep = 0.2cm, row sep = 0.2cm]
	i_{k+1,1} \arrow[r,  "\textcolor{black}{{>}}" description,sloped,	color=white] 
	& i_{k+1,2} \arrow[r,  "\textcolor{black}{{>}}" description,sloped,	color=white] \arrow[d, equal] 
	& \cdots \arrow[r,  "\textcolor{black}{{>}}" description,sloped,	color=white] 
	& i_{k+1,j+1} \arrow[r,  "\textcolor{black}{{>}}" description,sloped,	color=white] \arrow[d, equal]
	& \cdots \arrow[r,  "\textcolor{black}{{>}}" description,sloped,	color=white] 
	& i_{k+1,k} \arrow[r,  "\textcolor{black}{{>}}" description,sloped,	color=white] \arrow[d, equal]
	& i_{k+1,k+1} \arrow[d, equal]\\
	& i_{k,1} \arrow[r,  "\textcolor{black}{{>}}" description,sloped,	color=white] & \cdots \arrow[r,  "\textcolor{black}{{>}}" description,sloped,	color=white] & i_{k,j} \arrow[r,  "\textcolor{black}{{>}}" description,sloped,	color=white] & \cdots \arrow[r,  "\textcolor{black}{{>}}" description,sloped,	color=white] & i_{k,k-1} \arrow[r,  "\textcolor{black}{{>}}" description,sloped,	color=white] & i_{k,k}
	\end{tikzcd}
	\end{equation}
\end{enumerate}

Now we present three lemmas which play important roles in the proof.

\begin{lemma}\label{lemma_proof_Proposition_GC_3}
	Let $\lambda_{k,j} = \mathbf{v}_k(j) - \mathbf{v}_k(j+1)$ for $\mathbf{v}_k = (\mathbf{v}_k(1),\dots,\mathbf{v}_k(k)) \in \Z^k$ for $1 \leq k \leq n$ and $1 \leq j \leq k-1$. Then we have that
	\[
	\lambda_{k,j} = \begin{cases}
	\lambda_{i_{k,j+1}} & \text{ if } \sigma_k = A, \\
	\lambda_{i_{k,k-j+1}} & \text{ if } \sigma_k = D.
	\end{cases}
	\]
\end{lemma}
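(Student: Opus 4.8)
The plan is to unwind the inductive definitions of the vectors $\mathbf v_k$ and relate their successive differences to the Cartan-integer bookkeeping encoded in the arrays \eqref{eq_i_kj_AA}--\eqref{eq_i_kj_DD}. First I would handle the base case $k=n$ directly from the explicit formula for $\mathbf v_n$: if $\sigma_n = A$ then $\mathbf v_n = (\sum_{j=2}^n\lambda_j, \sum_{j=3}^n\lambda_j,\dots,\lambda_n,0)$, so $\mathbf v_n(j) - \mathbf v_n(j+1) = \lambda_{j+1}$, and since $\sigma_n = A$ forces $\mathbf i$ to end with the ascending block $A_n$, we have $i_{n,j} = j$ for all $j$, whence $\lambda_{i_{n,j+1}} = \lambda_{j+1}$ as claimed; the $\sigma_n = D$ case is symmetric, giving $\mathbf v_n(j) - \mathbf v_n(j+1) = \lambda_{n-j+1}$ and $i_{n,j} = n - j + 1$ so that $\lambda_{i_{n,n-j+1}} = \lambda_{n-j+1}$.

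Next I would run the induction on $k$ downward from $n$. Suppose the statement holds for $k+1$. By definition $\mathbf v_k$ is obtained from $\mathbf v_{k+1}$ either by deleting the \emph{first} entry (if $\sigma_k = A$) or by deleting the \emph{last} entry (if $\sigma_k = D$). If $\sigma_k = A$, then $\mathbf v_k(j) = \mathbf v_{k+1}(j+1)$ for $1 \le j \le k$, so $\lambda_{k,j} = \mathbf v_k(j) - \mathbf v_k(j+1) = \mathbf v_{k+1}(j+1) - \mathbf v_{k+1}(j+2) = \lambda_{k+1, j+1}$, and I would substitute the inductive formula for $\lambda_{k+1,j+1}$, distinguishing whether $\sigma_{k+1} = A$ or $D$; this is exactly where Cases~1 and~3 of \eqref{eq_i_kj_AA} and \eqref{eq_i_kj_AD} enter, since those diagrams record precisely the identities $i_{k,\bullet} = i_{k+1,\bullet'}$ coming from the $A$- or $D$-extension at $0$. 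The symmetric computation with ``delete the last entry'' handles $\sigma_k = D$, invoking Cases~2 and~4.

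The mechanical heart is therefore a four-way case check on $(\sigma_k,\sigma_{k+1})$, and the only real care needed is tracking the index shifts: when $\sigma_k = A$ we compare $\lambda_{k,j}$ to $\lambda_{k+1,j+1}$, which the inductive hypothesis expresses in terms of $i_{k+1,j+2}$ (if $\sigma_{k+1}=A$) or $i_{k+1,k-j+1}$ (if $\sigma_{k+1}=D$); one then reads off from the appropriate commutative diagram that this equals $i_{k,j+1}$, matching the $\sigma_k=A$ branch of the desired conclusion. I expect the main obstacle to be nothing conceptual but rather getting all of these off-by-one index translations between the $\mathbf v$-indexing, the $(k,j)$-coordinate indexing of $\mathbf i$, and the diagrams \eqref{eq_i_kj_AA}--\eqref{eq_i_kj_DD} simultaneously consistent; a careful side-by-side tabulation of the four cases, each split again according to $\sigma_{k+1}$, should make the bookkeeping routine. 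It may be cleanest to first establish, as an auxiliary observation, the purely combinatorial identities among the $i_{k,j}$ read off from the extension operators (i.e.\ that deleting the top or bottom wire shifts the column labels of the surviving crossings in the stated way), and then feed these into the difference computation.
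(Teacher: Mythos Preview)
Your proposal is correct and follows essentially the same approach as the paper's own proof: downward induction on $k$ from $n$, with the base case read off from the explicit formula for $\mathbf v_n$, and the inductive step handled by the four-way case analysis on $(\sigma_k,\sigma_{k+1})$ using the index identities in \eqref{eq_i_kj_AA}--\eqref{eq_i_kj_DD}. One small slip to fix: in the $\sigma_n=D$ base case you should get $\mathbf v_n(j)-\mathbf v_n(j+1)=\lambda_j$ (not $\lambda_{n-j+1}$), and correspondingly $i_{n,n-j+1}=j$ so $\lambda_{i_{n,n-j+1}}=\lambda_j$; otherwise your outline matches the paper exactly.
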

\begin{proof}
	We use an induction argument on $k = n,n-1,\dots,1$. Suppose that $k = n$ and $\sigma_n = A$. Then $i_{n,j} = j$. On the other hand, $\lambda_{n,j} = \mathbf{v}_n(j) - \mathbf{v}_n(j+1) = \lambda_{j+1} = \lambda_{i_{n,j+1}}$. Hence the lemma holds. Consider the case when $k = n$ and $\sigma_n = D$. Then $i_{n,j} = n-j+1$, also we have that $\lambda_{n,j} = \lambda_j = \lambda_{i_{n,n-j+1}}$, so that the lemma holds when $k = n$.
	
	Now we assume that the lemma holds for $k+1$. We first note that 
	\begin{equation}\label{eq_L_kj}
	\lambda_{k,j} = \begin{cases}
	\lambda_{k+1,j+1} & \text{ if } \sigma_k = A, \\
	\lambda_{k+1,j} & \text{ if } \sigma_k = D
	\end{cases}
	\end{equation}
	by definition of the vector $\mathbf{v}_k$ and the induction hypothesis. 	
	Then we can prove the lemma using the case by case analysis on the pair $(\sigma_k, \sigma_{k+1})$.
	\begin{enumerate}
	\item $(\sigma_k, \sigma_{k+1}) = (A,A)$: By~\eqref{eq_i_kj_AA} and \eqref{eq_L_kj}, we get $\lambda_{k,j} = \lambda_{k+1,j+1} = \lambda_{i_{k+1,j+2}} = \lambda_{i_{k,j+1}}$.
	\item $(\sigma_k, \sigma_{k+1}) = (D,A)$: By~\eqref{eq_i_kj_DA} and \eqref{eq_L_kj}, we get $\lambda_{k,j} = \lambda_{k+1,j} = \lambda_{i_{k+1,j+1}} = \lambda_{i_{k,k-j+1}}$.
	\item $(\sigma_k, \sigma_{k+1}) = (A,D)$: By~\eqref{eq_i_kj_AD} and \eqref{eq_L_kj}, we get $\lambda_{k,j} = \lambda_{k+1,j+1} = \lambda_{i_{k+1,k-j+1}} = \lambda_{i_{k,j+1}}$.
	\item $(\sigma_k, \sigma_{k+1}) = (D,D)$: By~\eqref{eq_i_kj_DD} and \eqref{eq_L_kj}, we get $\lambda_{k,j} = \lambda_{k+1,j} = \lambda_{i_{k+1, k-j+2}} = \lambda_{i_{k,k-j+1}}$.
	\end{enumerate}
	Hence the result follows.
\end{proof}

\begin{lemma}\label{lemma_proof_Proposition_GC_1}
	For $k \in [n]$ and $j \in [k]$, the linear functional $\skj{k}{j}$ satisfies the following equality:
	\[
	\skj{k}{j} = \begin{cases}
	-\tkj{k}{j} + \tkj{k}{j+1} + \tkj{k+1}{j} - \tkj{k+1}{j+1} + \skj{k+1}{j+1} & \text{ if } (\sigma_k,\sigma_{k+1}) = (A,A) \text{ or }(D,D), \\
	-\tkj{k}{j} + \tkj{k}{j+1} + \tkj{k+1}{k-j+1} - \tkj{k+1}{k-j+2} + \skj{k+1}{k-j+2} & \text{ if }(\sigma_k, \sigma_{k+1}) = (D,A) \text{ or }(A,D).
	\end{cases}
	\]
	Here, we put $\tkj{k}{k+1} = 0$ and $\tkj{n+1}{j} = 0$.
\end{lemma}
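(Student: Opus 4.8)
The plan is to expand $s_{k,j}$ directly from Definition~\ref{definition_lambda_inequality}, organizing the terms according to the block structure of the word $\mathbf i = \mathbf i'$. Recall that $\mathbf i'$ splits into blocks $B_1,\dots,B_n$ read left to right, where $B_k$ is an increasing or decreasing run (according to $\sigma_k$) on a set of consecutive columns $\{c_k+1,\dots,c_k+k\}$ for some shift $c_k\geq 0$, corresponds to the coordinates $t_{k,1},\dots,t_{k,k}$, and whose column labels are related to those of $B_{k+1}$ exactly as recorded in diagrams~\eqref{eq_i_kj_AA}--\eqref{eq_i_kj_DD}. Since the $\lambda$-inequality at $t_{k,j}$ involves only the nodes lying strictly after it, namely the tail of $B_k$ together with $B_{k+1},\dots,B_n$, I would first compute the contribution of each of these pieces separately, then reassemble.

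First I would observe that the contribution of the tail of $B_k$ to $s_{k,j}$ is just $t_{k,j+1}$ (with the convention $t_{k,k+1}=0$ when $j=k$): the columns occurring in $B_k$ are distinct consecutive integers in monotone order, so among the later nodes of $B_k$ only the immediate successor $t_{k,j+1}$ is column-adjacent to $t_{k,j}$, and none lies in its column. Next, inside $B_{k+1}$ there is exactly one node, say $t_{k+1,m}$, in the same column as $t_{k,j}$; its index is $m=j+1$ if $(\sigma_k,\sigma_{k+1})\in\{(A,A),(D,D)\}$ and $m=k-j+2$ if $(\sigma_k,\sigma_{k+1})\in\{(D,A),(A,D)\}$, read off from \eqref{eq_i_kj_AA}--\eqref{eq_i_kj_DD}. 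Because $B_{k+1}$ too is a monotone run on consecutive columns, its nodes in the two columns neighbouring that of $t_{k,j}$ are $t_{k+1,m-1}$ and $t_{k+1,m+1}$, where an out-of-range index is read as $0$; a quick check in each of the four cases shows $m-1\geq 1$ always, so $t_{k+1,m-1}$ really occurs. Hence the contribution of $B_{k+1}$ to $s_{k,j}$ is $t_{k+1,m-1}-2t_{k+1,m}+t_{k+1,m+1}$.

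The crucial step will be the comparison of the leftover tails. Since $t_{k,j}$ and $t_{k+1,m}$ lie in the same column and the coefficients in Definition~\ref{definition_lambda_inequality} depend only on the column of a node, the contribution of $B_{k+2}\cup\cdots\cup B_n$ to $s_{k,j}$ is \emph{identical} to its contribution to $s_{k+1,m}$. Expanding $s_{k+1,m}$ by the same block bookkeeping gives $s_{k+1,m}=-t_{k+1,m}+t_{k+1,m+1}+(\text{contribution of }B_{k+2}\cup\cdots\cup B_n)$, so that common tail equals $s_{k+1,m}+t_{k+1,m}-t_{k+1,m+1}$. Substituting into $s_{k,j}=-t_{k,j}+(\text{tail of }B_k)+(\text{contribution of }B_{k+1})+(\text{remaining tail})$ makes the two $t_{k+1,m+1}$ terms cancel and yields
\[
s_{k,j}=-t_{k,j}+t_{k,j+1}+t_{k+1,m-1}-t_{k+1,m}+s_{k+1,m},
\]
which is the asserted identity once $m$ is substituted. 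For $k=n$ the block $B_n$ is the last one, so directly $s_{n,j}=-t_{n,j}+t_{n,j+1}$, consistent with the conventions.

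The main obstacle is not conceptual but bookkeeping: one must keep exact track of the $-2$ versus $+1$ coefficients and of the degenerate indices $j=1$ and $j=k$, where one of the neighbouring-column nodes of $B_{k+1}$ is missing. These degeneracies are exactly what is neutralized by the cancellation of the $t_{k+1,m+1}$ terms and by reading out-of-range $t$'s as $0$, so the four cases $(\sigma_k,\sigma_{k+1})$ can be handled uniformly with only a short case check for the value of $m$ and the positions of the neighbouring nodes.
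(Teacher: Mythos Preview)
Your proposal is correct and follows essentially the same approach as the paper: both arguments rest on the column relations recorded in diagrams~\eqref{eq_i_kj_AA}--\eqref{eq_i_kj_DD} to locate, within $B_{k+1}$, the unique node $t_{k+1,m}$ sharing the column of $t_{k,j}$, after which the identity is a direct comparison of the two $\lambda$-inequality expansions. Your write-up is in fact more explicit than the paper's---you isolate the key point that the contribution of $B_{k+2}\cup\cdots\cup B_n$ is common to $s_{k,j}$ and $s_{k+1,m}$, whereas the paper phrases the argument as an ``induction on $k$'' but never actually invokes an inductive hypothesis, simply asserting the formula in each case.
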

\begin{proof}
	We prove this lemma by an induction argument on $k = n, n-1,\dots,1$. Suppose that $k = n$. Then we have that
	\[
	\skj{n}{j} = - \tkj{n}{j} + \tkj{n}{j+1}
	\]
	for both of cases $\sigma_n = A$ and $\sigma_n = D$. Hence the lemma follows. 

	Now we suppose that $k < n$. Then we can prove the lemma from the case by case analysis on the pair $(\sigma_k, \sigma_{k+1})$:
	\begin{enumerate}
		\item $(\sigma_k, \sigma_{k+1}) = (A,A)$ or $(D,D)$: By~\eqref{eq_i_kj_AA} and~\eqref{eq_i_kj_DD}, 
		\[
		\skj{k}{j} = -\tkj{k}{j} + \tkj{k}{j+1} + \tkj{k+1}{j} - \tkj{k+1}{j+1} + \skj{k+1}{j+1}.
		\]
		\item $(\sigma_k, \sigma_{k+1}) = (D,A)$ or $(A,D)$: By~\eqref{eq_i_kj_DA} and~\eqref{eq_i_kj_AD}, 
		\[
		\skj{k}{j} = -\tkj{k}{j} + \tkj{k}{j+1} + \tkj{k+1}{k-j+1} - \tkj{k+1}{k-j+2} + \skj{k+1}{k-j+2}. \qedhere
		\] 
	\end{enumerate}
\end{proof}

In the setting of Theorem~\ref{prop_i_flag_affine_equiv_GC}, the affine transformation $T \colon \mathbf{t} \mapsto M \mathbf{t} + \mathbf{v}$ defined by the matrix $M$ and the vector $\mathbf{v}$ in Section \ref{secGelfandCetlinTypeStringPolytopes}
maps the polytope $\Delta_{\mathbf i}(\lambda)$ into $GC(\lambda)$ using the coordinates $(\xkj{1}{1}, \xkj{2}{1}, \xkj{2}{2},\dots,\xkj{n}{1},\dots,\xkj{n}{n})$ for the codomain. 

\begin{lemma}\label{lemma_proof_Proposition_GC_2}
	For $k \in [n] $ and $j \in [k-1]$, $\xkj{k}{j}$ satisfies the following equality:
	\[
	\xkj{k}{j} - \xkj{k}{j+1} = \begin{cases}
	\tkj{k}{j} - \tkj{k}{j+1} + \skj{k}{j+1} + \lambda_{k+1,j+1} & \text{ if } \sigma_k = A, \\
	\tkj{k}{k-j} - \tkj{k}{k-j+1} + \skj{k}{k-j+1} + \lambda_{k+1,j} & \text{ if } \sigma_k = D.
	\end{cases}
	\]
\end{lemma}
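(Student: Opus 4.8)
The plan is to prove Lemma~\ref{lemma_proof_Proposition_GC_2} by downward induction on $k = n, n-1, \dots, 1$, run in parallel with the recursions already recorded in Lemmas~\ref{lemma_proof_Proposition_GC_1} and~\ref{lemma_proof_Proposition_GC_3}. The essential preliminary step is to distill, from the inductive definitions of the blocks $M_{k,l}$ and the vectors $\mathbf{v}_k$, a single recursion for the rows of $M\mathbf{t}+\mathbf{v}$. Introduce the notation $X_k := (\xkj{k}{1}, \dots, \xkj{k}{k})$ for the $k$-th block of $M\mathbf{t}+\mathbf{v}$, $\mathbf{t}_k := (\tkj{k}{1}, \dots, \tkj{k}{k})$, and $w^{A}$ (resp.\ $w^{D}$) for the vector obtained from a vector $w$ by deleting its first (resp.\ last) coordinate. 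From the definitions one checks that $M_{k,l} = 0$ for $l < k$, that $M_{k,k} \in \{A_k, D_k\}$ according to $\sigma_k$, and that for $l \geq k+1$ the pieces $M_{k,l}$ and $\mathbf{v}_k$ are obtained from $M_{k+1,l}$ and $\mathbf{v}_{k+1}$ by the very same row/entry deletion dictated by $\sigma_k$. Combining these yields
\[
X_k = M_{k,k}\mathbf{t}_k + (X_{k+1})^{\sigma_k}, \qquad k = 1, \dots, n,
\]
where $X_{n+1} := (\xkj{n+1}{1}, \dots, \xkj{n+1}{n+1})$ is the constant vector with $\xkj{n+1}{j} = \lambda_j + \cdots + \lambda_n$ (so $\xkj{n+1}{n+1} = 0$); the base instance $X_n = M_{n,n}\mathbf{t}_n + \mathbf{v}_n$ is consistent with this because $\mathbf{v}_n = (X_{n+1})^{\sigma_n}$ by the explicit formula for $\mathbf{v}_n$. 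It is convenient to also set $\lambda_{n+1,m} := \lambda_m$, so that relation~\eqref{eq_L_kj}---$\lambda_{k,j} = \lambda_{k+1,j+1}$ if $\sigma_k = A$, and $\lambda_{k,j} = \lambda_{k+1,j}$ if $\sigma_k = D$---holds for all $1 \leq k \leq n$.

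Next I would read the horizontal differences straight off this recursion. Using $(A_k\mathbf{t}_k)(i) = \tkj{k}{i} - \tkj{k}{i+1}$ and $(X_{k+1})^{A}(i) = \xkj{k+1}{i+1}$ one obtains, when $\sigma_k = A$,
\[
\xkj{k}{j} - \xkj{k}{j+1} = (\tkj{k}{j} - \tkj{k}{j+1}) + (-\tkj{k}{j+1} + \tkj{k}{j+2}) + (\xkj{k+1}{j+1} - \xkj{k+1}{j+2}),
\]
and, symmetrically, using $(D_k\mathbf{t}_k)(i) = -\tkj{k}{k-i+1} + \tkj{k}{k-i+2}$ and $(X_{k+1})^{D}(i) = \xkj{k+1}{i}$, when $\sigma_k = D$,
\[
\xkj{k}{j} - \xkj{k}{j+1} = (\tkj{k}{k-j} - \tkj{k}{k-j+1}) + (-\tkj{k}{k-j+1} + \tkj{k}{k-j+2}) + (\xkj{k+1}{j} - \xkj{k+1}{j+1}),
\]
throughout with the standing conventions $\tkj{k}{k+1} = 0$ and $\xkj{n+1}{n+1} = 0$. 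Comparing with the asserted formula, the task reduces to showing that $\skj{k}{j+1} + \lambda_{k+1,j+1}$ (for $\sigma_k = A$), respectively $\skj{k}{k-j+1} + \lambda_{k+1,j}$ (for $\sigma_k = D$), equals the sum of the last two parenthesized groups above.

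The inductive step is then formal: substitute the recursion for $\skj{k}{\bullet}$ supplied by Lemma~\ref{lemma_proof_Proposition_GC_1} and the induction hypothesis---Lemma~\ref{lemma_proof_Proposition_GC_2} at level $k+1$---for the occurrence of $\xkj{k+1}{\bullet} - \xkj{k+1}{\bullet}$ in the displays above; the $\mathbf{t}_{k+1}$-terms and $\skj{k+1}{\bullet}$-terms cancel, and in each of the four cases for $(\sigma_k, \sigma_{k+1})$ what remains is a single instance of~\eqref{eq_L_kj} at level $k+1$ (for example $\lambda_{k+1,j+1} = \lambda_{k+2,j+2}$ when $(\sigma_k,\sigma_{k+1}) = (A,A)$, and $\lambda_{k+1,j+1} = \lambda_{k+2,j+1}$ when $(\sigma_k,\sigma_{k+1}) = (A,D)$; the two $D$-cases are mirror images). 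The base case $k = n$ is a one-line check from the explicit $\mathbf{v}_n$, the base case $\skj{n}{m} = -\tkj{n}{m} + \tkj{n}{m+1}$ of Lemma~\ref{lemma_proof_Proposition_GC_1}, and $\xkj{n+1}{m} - \xkj{n+1}{m+1} = \lambda_m$. The step I expect to be the main obstacle---really the only delicate point---is the bookkeeping of the index reversal $j \leftrightarrow k-j+1$ and of the boundary conventions ($\tkj{k}{k+1} = 0$, the last coordinate of $X_{k+1}$, and the value $\lambda_{n+1,\bullet}$) uniformly across all four orientation cases and the extreme subcases $j = 1$ and $j = k-1$; channelling everything through the single recursion $X_k = M_{k,k}\mathbf{t}_k + (X_{k+1})^{\sigma_k}$ is what keeps this under control.
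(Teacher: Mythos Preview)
Your proposal is correct and follows essentially the same approach as the paper's proof. The paper also argues by downward induction on $k$, derives the componentwise recursion~\eqref{eq_xkj_AD} (your $X_k = M_{k,k}\mathbf{t}_k + (X_{k+1})^{\sigma_k}$ written out entry by entry), forms the differences $\xkj{k}{j}-\xkj{k}{j+1}$, and then in each of the four cases for $(\sigma_k,\sigma_{k+1})$ substitutes the induction hypothesis, Lemma~\ref{lemma_proof_Proposition_GC_1}, and~\eqref{eq_L_kj} to finish; your packaging via the block recursion is a cosmetic but tidy improvement.
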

\begin{proof}
	We use an induction argument on $k = n,n-1,\dots,1$ and the case by case analysis on the pair $(\sigma_k, \sigma_{k+1})$ to prove this lemma. First, we notice that by the definition of $\xkj{k}{j}$, we have that
	\begin{equation}\label{eq_xkj_AD}
	\xkj{k}{j} = \begin{cases}
	\xkj{k+1}{j+1} + \tkj{k}{j} - \tkj{k}{j+1} & \text{ if } \sigma_k = A, \\
	\xkj{k+1}{j} - \tkj{k}{k-j+1} + \tkj{k}{k-j+2} & \text{ if } \sigma_k = D.
	\end{cases}
	\end{equation}
 We may put $\lambda_{n+1,j} = \xkj{n+1}{j}- \xkj{n+1}{j+1}$ for $j = 1,\dots,n$. 
	
	Consider the case when $k = n$. We know that 
	\[
	\xkj{n}{j} =  \begin{cases}
	\tkj{n}{j} - \tkj{n}{j+1} + \xkj{n+1}{j+1} & \text{ if } \sigma_n = A, \\
	- \tkj{n}{n-j+1} + \tkj{n}{n-j+2} + \xkj{n+1}{j} & \text{ if } \sigma_n = D.
	\end{cases}
	\]
	by~\eqref{eq_xkj_AD}. 
		Hence when $\sigma_n = A$, we get that
	\[
	\xkj{n}{j} - \xkj{n}{j+1} = \tkj{n}{j} - \tkj{n}{j+1} - (\tkj{n}{j+1} - \tkj{n}{j+2}) + (\xkj{n+1}{j+1} - \xkj{n+1}{j+2})= \tkj{n}{j} - \tkj{n}{j+1} + \skj{n}{j+1} + \lambda_{n+1,j+1},
	\]
	which shows the case when $\sigma_n = A$. When $\sigma_n = D$, we have
	\[
	\begin{split}
	\xkj{n}{j} - \xkj{n}{j+1} &= - \tkj{n}{n-j+1} + \tkj{n}{n-j+2} - (-\tkj{n}{n-j} + \tkj{n}{n-j+1}) + \xkj{n+1}{j} - \xkj{n+1}{j+1} \\
	&= \tkj{n}{n-j} - \tkj{n}{n-j+1} + \skj{n}{n-j+1} + \lambda_{n+1,j},
	\end{split}
	\]
	which proves the case when $k = n$.
	
	Suppose that the lemma holds for $k+1,\dots,n$. If $\sigma_k = A$, then we have 
	\begin{equation}\label{eq_xkj_lemma_1}
	\xkj{k}{j} - \xkj{k}{j+1} = \xkj{k+1}{j+1} + \tkj{k}{j} - \tkj{k}{j+1} - \xkj{k+1}{j+2} - \tkj{k}{j+1} + \tkj{k}{j+2}
	\end{equation}
	by~\eqref{eq_xkj_AD}. When $\sigma_{k+1} = A$, using the induction hypothesis, the above equality~\eqref{eq_xkj_lemma_1} becomes to
	\[
	\begin{split}
	\xkj{k}{j} - \xkj{k}{j+1}
	&= \tkj{k+1}{j+1} - \tkj{k+1}{j+2} + \skj{k+1}{j+2} + \lambda_{k+2,j+2} + \tkj{k}{j} - \tkj{k}{j+1} - \tkj{k}{j+1} + \tkj{k}{j+2} \\
	&= \tkj{k}{j} - \tkj{k}{j+1} + \skj{k}{j+1} + \lambda_{k+1,j+1}.
	\end{split}
	\]
	The second equality comes from Lemma~\ref{lemma_proof_Proposition_GC_1} and~\eqref{eq_L_kj}. When $\sigma_{k+1}=D$, then again by the induction hypothesis, the equality~\eqref{eq_xkj_lemma_1} changes to
	\[
	\begin{split}
	\xkj{k}{j} - \xkj{k}{j+1} 
	&= \tkj{k+1}{k-j} - \tkj{k+1}{k-j+1} + \skj{k+1}{k-j+1} + \lambda_{k+2,j+1} + \tkj{k}{j} - \tkj{k}{j+1} - \tkj{k}{j+1} + \tkj{k}{j+2} \\
	&= \tkj{k}{j} - \tkj{k}{j+1} + \skj{k}{j+1} + \lambda_{k+1,j+1}.
	\end{split}
	\]
		The second equality comes from Lemma~\ref{lemma_proof_Proposition_GC_1} and~\eqref{eq_L_kj}. 
		
	If $\sigma_k = D$, then we get
	\begin{equation}\label{eq_xkj_lemma_2}
	\xkj{k}{j}- \xkj{k}{j+1} = \xkj{k+1}{j} -\tkj{k}{k-j+1} + \tkj{k}{k-j+2} - \xkj{k+1}{j+1} + \tkj{k}{k-j} - \tkj{k}{k-j+1}
	\end{equation}
	by~\eqref{eq_xkj_AD}. Consider the case when $\sigma_{k+1} = A$. Then using the induction hypothesis, the equality~\eqref{eq_xkj_lemma_2} becomes
	\[
	\begin{split}
	\xkj{k}{j} - \xkj{k}{j+1} &= \tkj{k+1}{j} - \tkj{k+1}{j+1} + \skj{k+1}{j+1} + \lambda_{k+2,j+1} - \tkj{k}{k-j+1} + \tkj{k}{k-j+2} + \tkj{k}{k-j} - \tkj{k}{k-j+1} \\
	&= \tkj{k}{k-j} - \tkj{k}{k-j+1} + \skj{k}{k-j+1} + \lambda_{k+1,j}.
	\end{split}
	\]
			The second equality comes from Lemma~\ref{lemma_proof_Proposition_GC_1} and~\eqref{eq_L_kj}. 
	Finally, when $\sigma_{k+1} = D$, then by the induction, the equality~\eqref{eq_xkj_lemma_2} changes to
	\[
	\begin{split}
	\xkj{k}{j} - \xkj{k}{j+1} &= \tkj{k+1}{k-j+1} - \tkj{k+1}{k-j+2} + \skj{k+1}{k-j+2} + \lambda_{k+2,j} - \tkj{k}{k-j+1} + \tkj{k}{k-j+2} + \tkj{k}{k-j} - \tkj{k}{k-j+1} \\
	&= \tkj{k}{k-j} - \tkj{k}{k-j+1} + \skj{k}{k-j+1} + \lambda_{k+1,j}.
	\end{split}
	\]
	The second equality comes from Lemma~\ref{lemma_proof_Proposition_GC_1} and~\eqref{eq_L_kj}. Hence the result follows.
\end{proof}

Before giving the proof of Theorem~\ref{prop_i_flag_affine_equiv_GC}, let us look at a simple example.

\begin{example}
	Let $\mathbf i = (2,3,2,1,2,3)$. Then by Example~\ref{example_232123}, one can see that the affine transformation in~Theorem~\ref{prop_i_flag_affine_equiv_GC} maps $\mathbf{t} = (\tkj{1}{1}, \tkj{2}{1}, \tkj{2}{2}, \tkj{3}{1},\tkj{3}{2},\tkj{3}{3}) \in\Delta_{\bf i}(\lambda)$  to
	\[
	\begin{tikzcd}[column sep = -0.5cm, row sep = -0cm]
	\xkj{3}{1} && \xkj{3}{2} && \xkj{3}{3} \\
	& \xkj{2}{1} && \xkj{2}{2} \\
	&& \xkj{1}{1}
	\end{tikzcd} = 
	\begin{tikzcd}[column sep = -2cm, row sep = 0cm]
	\tkj{3}{1} - \tkj{3}{2} + \lambda_2 + \lambda_3 && \tkj{3}{2} - \tkj{3}{3} + \lambda_3 && \tkj{3}{3} \\
	& -\tkj{2}{2} + \tkj{3}{1} - \tkj{3}{2} + \lambda_2 +\lambda_3 &&  -\tkj{2}{1} + \tkj{2}{2} + \tkj{3}{2} - \tkj{3}{3} + \lambda_3 \\
	&& -\tkj{1}{1} - \tkj{2}{2} + \tkj{3}{1} - \tkj{3}{2} + \lambda_2 + \lambda_3
	\end{tikzcd}
	\]
	We will show that this point is contained in the Gelfand--Cetlin polytope, i.e., it satisfies the relation 
	\begin{equation}\label{eq_relation_skj}
	\xkj{k+1}{j} \geq \xkj{k}{j} \geq \xkj{k+1}{j+1} \quad \text{for $1 \leq j \leq k$ and $1 \leq k \leq 3$. }
	\end{equation}	
	
	When $k = 3$, then since we have $\tkj{3}{1} \geq \tkj{3}{2} \geq \tkj{3}{3} \geq 0$ by Theorem~\ref{theorem_simplicial}, one can see
	\[
	\tkj{3}{1} - \tkj{3}{2} + \lambda_2 + \lambda_3  \geq \lambda_2 + \lambda_3, \quad 
	\tkj{3}{2} - \tkj{3}{3} + \lambda_3 \geq \lambda_3, \quad
	 \tkj{3}{3} \geq 0.
	\]
	One the other hand, from $\lambda$-inequalities we get
	\[
	\begin{split}
	& \lambda_1 + \lambda_2 + \lambda_3 - (\tkj{3}{1} - \tkj{3}{2} + \lambda_2 + \lambda_3)
	= \lambda_1 - \tkj{3}{1} + \tkj{3}{2} = \skj{3}{1} + \lambda_1 \geq 0; \\
	& \lambda_2 + \lambda_3 - (\tkj{3}{2} - \tkj{3}{3} + \lambda_3)
	= \lambda_2  - \tkj{3}{2} + \tkj{3}{3} = \skj{3}{2} + \lambda_2 \geq 0; \\
	& \lambda_3 - \tkj{3}{3} = \skj{3}{3} + \lambda_3 \geq 0.
	\end{split}
	\]
	Hence the inequalities in~\eqref{eq_relation_skj} hold when $k = 3$. 
	
	Consider the case when $k = 2$. Since $\tkj{2}{1} \geq \tkj{2}{2} \geq 0$, we get 
	\[
	\tkj{3}{1} - \tkj{3}{2} + \lambda_2 + \lambda_3 \geq  -\tkj{2}{2} + \tkj{3}{1} - \tkj{3}{2} + \lambda_2 +\lambda_3, \quad
	 \tkj{3}{2} - \tkj{3}{3} + \lambda_3  \geq -\tkj{2}{1} +\tkj{2}{2}+ \tkj{3}{2} - \tkj{3}{3} + \lambda_3.
	\]
	Moreover, we have 
	\[
	\begin{split}
	&-\tkj{2}{2} + \tkj{3}{1} - \tkj{3}{2} + \lambda_2 +\lambda_3 -( \tkj{3}{2} - \tkj{3}{3} + \lambda_3) = \skj{2}{2} + \lambda_2 \geq 0, \\
	& -\tkj{2}{1} + \tkj{2}{2} + \tkj{3}{2} - \tkj{3}{3} + \lambda_3 - \tkj{3}{3} = \skj{2}{1} + \lambda_3 \geq 0.
	\end{split}
	\]
	Therefore the inequalities~\eqref{eq_relation_skj} also hold when $k = 2$.
	
	Finally, when $k = 1$, the inequality $\tkj{1}{1} \geq 0$ implies that
	\[
	-\tkj{2}{2} + \tkj{3}{1} - \tkj{3}{2} + \lambda_2 +\lambda_3 \geq -\tkj{1}{1} -\tkj{2}{2} + \tkj{3}{1} - \tkj{3}{2} + \lambda_2 +\lambda_3.
	\]
	Also, we have that
	\[
	-\tkj{1}{1} -\tkj{2}{2} + \tkj{3}{1} - \tkj{3}{2} + \lambda_2 +\lambda_3 - (-\tkj{2}{1} +\tkj{2}{2}+ \tkj{3}{2} - \tkj{3}{3} + \lambda_3 )
	=\skj{1}{1} + \lambda_2 \geq 0.
	\]
	This shows that the affine transformation in Theorem~\ref{prop_i_flag_affine_equiv_GC} maps the string polytope to the GC-polytope. \qed
\end{example}

We finally give the proof of Theorem \ref{prop_i_flag_affine_equiv_GC}.

\begin{proof}[Proof of Theorem~\ref{prop_i_flag_affine_equiv_GC}] 
Since the volumes of two polytopes $\Delta_{\mathbf i}(\lambda)$ and $GC(\lambda)$ are same by~Lemma~\ref{lemma_volumns_are_same}, it is enough to show that the affine transformation $T$  defined by the matrix $M$ and the vector $\mathbf{v}$ maps the polytope $\Delta_{\mathbf i}(\lambda)$ into $GC(\lambda)$ using the coordinates $(\xkj{1}{1}, \xkj{2}{1}, \xkj{2}{2},\dots,\xkj{n}{1},\dots,\xkj{n}{n})$ for the codomain.  Indeed, it is enough to show that
\begin{equation}\label{eq_GC_inequl_xkj}
\xkj{k+1}{j} \geq \xkj{k}{j} \geq \xkj{k+1}{j+1} 
\end{equation}
for $j = 1,\dots, k$ and $k = 1,\dots,n$. 

We first consider the case when $\sigma_k = A$. Since, by Theorem~\ref{theorem_simplicial}, the string cone inequality is given by 
\[
\tkj{k}{1} \geq \tkj{k}{2} \geq \cdots \geq \tkj{k}{k} \geq 0,
\]
we have that $\xkj{k}{j} \geq \xkj{k+1}{j+1}$ by~\eqref{eq_xkj_AD}. Considering $\xkj{k+1}{j} - \xkj{k}{j}$, we have that
\[
\xkj{k+1}{j} - \xkj{k}{j} = \xkj{k+1}{j} - \xkj{k+1}{j+1} - \tkj{k}{j} + \tkj{k}{j+1} 
\]
again by~\eqref{eq_xkj_AD}. If $\sigma_{k+1} = A$, then we have
\[
\begin{split}
\xkj{k+1}{j} - \xkj{k}{j} &= \tkj{k+1}{j} - \tkj{k+1}{j+1} + \skj{k+1}{j+1} + \lambda_{k+2,j+1} - \tkj{k}{j} + \tkj{k}{j+1}
\quad(\because~\text{Lemma~\ref{lemma_proof_Proposition_GC_2}}) \\
&= \skj{k}{j} + \lambda_{k+2,j+1} \quad(\because~\text{Lemma~\ref{lemma_proof_Proposition_GC_1}}) \\
&= \skj{k}{j} + \lambda_{i_{k,j}} \quad(\because~\eqref{eq_L_kj} \text{ and Lemma~\ref{lemma_proof_Proposition_GC_3}}) \\
& \geq 0 \quad(\because~\eqref{eq_string_skj})
\end{split}
\]
If $\sigma_{k+1} = D$, then we get
\[
\begin{split}
\xkj{k+1}{j} - \xkj{k}{j} &= \tkj{k+1}{k-j+1} - \tkj{k+1}{k-j+2} + \skj{k+1}{k-j+2} + \lambda_{k+2,j}- \tkj{k}{j} + \tkj{k}{j+1}
\quad(\because~\text{Lemma~\ref{lemma_proof_Proposition_GC_2}}) \\
&= \skj{k}{j} + \lambda_{k+2,j} \quad(\because~\text{Lemma~\ref{lemma_proof_Proposition_GC_1}}) \\
&= \skj{k}{j} + \lambda_{i_{k,j}} \quad(\because~\eqref{eq_L_kj} \text{ and Lemma~\ref{lemma_proof_Proposition_GC_3}}) \\
& \geq 0 \quad(\because~\eqref{eq_string_skj})
\end{split}
\]
This proves the inequality~\eqref{eq_GC_inequl_xkj} when $\sigma_k = A$.

Now we consider the case when $\sigma_k = D$. Again, we have the string cone inequalities  
\[
\tkj{k}{1} \geq \tkj{k}{2} \geq \cdots \geq \tkj{k}{k} \geq 0
\]
by Theorem~\ref{theorem_simplicial}. Then by~\eqref{eq_xkj_AD}, we have $\xkj{k+1}{j} \geq \xkj{k}{j}$. By considering $\xkj{k}{j} - \xkj{k+1}{j+1}$, we get
\[
\xkj{k}{j} - \xkj{k+1}{j+1} = \xkj{k+1}{j} - \tkj{k}{k-j+1} + \tkj{k}{k-j+2} - \xkj{k+1}{j+1}
\]
by~\eqref{eq_xkj_AD}. If $\sigma_{k+1} = A$, then we have
\[
\begin{split}
\xkj{k}{j} - \xkj{k+1}{j+1} &= \tkj{k+1}{j} - \tkj{k+1}{j+1} + \skj{k+1}{j+1} + \lambda_{k+2,j+1} - \tkj{k}{k-j+1} + \tkj{k}{k-j+2} \quad(\because~\text{Lemma~\ref{lemma_proof_Proposition_GC_2}}) \\
&= \skj{k}{k-j+1} + \lambda_{k+2,j+1} \quad(\because~\text{Lemma~\ref{lemma_proof_Proposition_GC_1}}) \\
&= \skj{k}{k-j+1} + \lambda_{i_{k,k-j+1}} \quad(\because~\eqref{eq_L_kj} \text{ and Lemma~\ref{lemma_proof_Proposition_GC_3}}) \\
& \geq 0 \quad(\because~\eqref{eq_string_skj})
\end{split}
\]
If $\sigma_{k+1} = D$, then we have
\[
\begin{split}
\xkj{k}{j} - \xkj{k+1}{j+1} &= \tkj{k+1}{k-j+1} - \tkj{k+1}{k-j+2} + \skj{k+1}{k-j+2} + \lambda_{k+2,j} - \tkj{k}{k-j+1} + \tkj{k}{k-j+2} \quad(\because~\text{Lemma~\ref{lemma_proof_Proposition_GC_2}}) \\
&= \skj{k}{k-j+1} + \lambda_{k+2,j}  \quad(\because~\text{Lemma~\ref{lemma_proof_Proposition_GC_1}}) \\
&= \skj{k}{k-j+1} + \lambda_{i_{k,k-j+1}} \quad(\because~\eqref{eq_L_kj} \text{ and Lemma~\ref{lemma_proof_Proposition_GC_3}}) \\
& \geq 0 \quad(\because~\eqref{eq_string_skj})
\end{split}
\]
Hence we proves the inequality~\eqref{eq_GC_inequl_xkj} when $\sigma_k = D$, which completes the proof. 
\end{proof}

\vspace{1cm}



\end{document}